%
%
%
%
%
%
%
%
%
%
\documentclass[paper=a4, fontsize=11pt,oneside]{scrartcl}	

\usepackage[a4paper,pdftex]{geometry}	
\setlength{\oddsidemargin}{5mm}			
\setlength{\evensidemargin}{5mm}

\usepackage{url}
\usepackage{float}
\usepackage[
	pdftex,
	pdfstartview={XYZ 0 1000 1.0},
	bookmarks=true,
	colorlinks,
	breaklinks=true,
	citecolor=black,
	filecolor=black,
	linkcolor=black,
	urlcolor=black,
	pdfborder={0 0 0}
]{hyperref}

\newcommand{\HRule}[1]{\rule{\linewidth}{#1}} 	

\makeatletter							
\def\printtitle{%
    {\centering \@title\par}}
\makeatother									

\makeatletter							
\def\printauthor{%
    {\centering \large \@author}}				
\makeatother							

\title{	
		 \normalsize \ \\[2.0cm]								
			\HRule{1pt} \\						
			\Large \textbf{\uppercase{PDE and hypersurfaces with prescribed mean curvature}}	
			\HRule{2pt} \\ [0.5cm]		
			\normalsize 
Lecture notes for the mini-course \textit{PDE and hypersurfaces with prescribed mean curvature} held in Federal University of S\~ao Carlos at the Workshop on Submanifold Theory and Geometric Analysis, August 05 -- 09, 2019.
}

\author{
{\Large		Yunelsy N\'apoles Alvarez}\\	
		IME-USP\\	
    \url{ynapolez@gmail.com} \\
		\url{ynalvarez@usp.br}\\
\small		This study was financed by the Coordena\c c\~ao de Aperfei\c coamento de Pessoal de N\'ivel Superior - Brasil (CAPES) -- Finance Code 001.\\
}

\usepackage[english]{babel}
\usepackage[T1]{fontenc}
\usepackage{lmodern}
\usepackage[utf8]{inputenc}

\usepackage[protrusion=true,expansion=true]{microtype}	
\usepackage{amsmath,amsfonts,amsthm,amssymb}
\usepackage{graphicx}
\usepackage[small]{caption}
\usepackage{bm}
\usepackage{makeidx}
\usepackage{mathtools}
\usepackage{indentfirst}
\usepackage{amssymb,amsmath,amsthm,amsfonts,amscd,amstext,amsbsy}
\usepackage{empheq}
\usepackage{latexsym}								   		
\usepackage{graphicx,graphics,epsfig}			
\usepackage{graphpap}										
\usepackage{float}
\usepackage{xcolor}
\usepackage{appendix}										
\usepackage{multicol}										
\usepackage{color}
\usepackage{epstopdf}		
\usepackage{enumerate}
\usepackage{wasysym}
\usepackage{mathrsfs}

\usepackage{lipsum}

\usepackage{mathtools}

\usepackage{enumitem}





\input{mis_comandos}

\makeatletter
\newcommand{\pushright}[1]{\ifmeasuring@#1\else\omit\hfill$\displaystyle#1$\fi\ignorespaces}
\newcommand{\pushleft}[1]{\ifmeasuring@#1\else\omit$\displaystyle#1$\hfill\fi\ignorespaces}
\makeatother

\makeatletter
\DeclareRobustCommand*{\bfseries}{%
  \not@math@alphabet\bfseries\mathbf
  \fontseries\bfdefault\selectfont
  \boldmath
}
\makeatother

\colorlet{shadecolor}{gray!40}


\RequirePackage{todonotes}


\begin{document}
\thispagestyle{empty}		

\printtitle					
  	\vfill
\printauthor				
\newpage
\setcounter{page}{1}		


\section{Introduction}\label{intro}

The aim of these notes is to introduce to the geometers useful tools from the \textit{Theory of partial differential equations} which are used in order to obtain hypersurfaces with prescribed mean curvature. For instance, graphs with prescribed mean curvature can be obtained by solving the Dirichlet problem for a particular quasilinear elliptic partial differential equation of second order. 


The Dirichlet problem for the prescribed mean curvature equation consists on find a function satisfying the prescribed mean curvature equation in a bounded domain of the $n-$dimensional Euclidean ambient space and that continuously takes on given boundary values. 
Precisely, given a smooth bounded domain $\W\subset\R^n$ ($n\geq 2$) and $\varphi\in\cl^0({\partial\W})$, we ask if for a prescribed smooth function {$H$} there exists some $u\in\cl^2(\W)\cap\cl^0(\overline{\W})$ satisfying
\begin{equation}\tag{$P$}\label{ProblemaP}
\left\{
\begin{split}
\diver \left(\dfrac{\nabla u}{\sqrt{1+\norm{\nabla u}^2}} \right)&=n H(x) \ \mbox{in}\ \W,\\
u&=\varphi \ \mbox{in}\ \partial\W.
\end{split}\right.
\end{equation} 
If this is the case, then the graph of $u$ 
 is an hypersurface in $\R^{n+1}$ of mean curvature $H(x)$ at each point $(x,u(x))$. 

This problem traces its roots to the Plateau's problem that was first posed by Lagrange \cite{lagrange} in 1760. Lagrange wanted to find the surface with the least area among all the surfaces having the same boundary. This problem arose as an example of the Calculus of Variations that he was developing. In order to find a minimum of the area functional, Lagrange derived the Euler-Lagrange equation for the solutions of this problem. That is, if an area minimizing surface in the three dimensional Euclidean space is a graph of a smooth function $u$ over a bounded domain, then $u$ necessarily satisfies 
\begin{equation}\label{eq_minima_n2}
\left(1+u_y^2 \right)u_{xx} - 2u_xu_yu_{xy}\left(1+u_x^2 \right)u_{yy}=0 
\end{equation}
in that domain. 

It was Meusnier \cite{Meusnier} in 1776 who gave a geometrical interpretation for equation \eqref{eq_minima_n2}. He realized that the connexion between this partial differential equation and Geometry is given by the concept of \textit{mean curvature} that was formally introduced on his work. Indeed, the mean curvature of the graph of the solutions of this equation must vanishes. All those surfaces are called minimal surfaces even though a surface having vanish mean curvature is not necessarily globally area minimizing. 

Find examples of functions satisfying the minimal surface equation \eqref{eq_minima_n2} is not an easy task. As a matter of fact, Lagrange could only give the constants functions as examples. It was Meusnier \cite{Meusnier} in 1776 who gave another example of such a function that locally represents the helicoid. He also proved that the catenoid, discovered by Euler in 1741, can be seen locally as the graph of a function satisfying \eqref{eq_minima_n2}. In 1834, almost seventy years later of the discovering of the catenoid and the helicoid, H. F. Scherk \cite{scherk} gave new examples of minimal surfaces, the most famous one is the Sherk's surface. 

At this point, the interest was about the geometry of the domains over which minimal graphs can actually exist rather than find explicit expressions of the solutions of \eqref{eq_minima_n2}.  In 1910, Bernstein \cite{Bernstein} showed that the Dirichlet problem for equation \eqref{eq_minima_n2} has solution in disks of $\R^2$ for continuous boundary data. But Bernstein also realized that the disks could be replaced by convex domains (see \cite[p. 236]{Bernstein}). Proofs of this fact were given independently in 1930 by Douglas \cite{Douglas1931} and Radó \cite[p. 795]{Rado1930} who also ensured the uniqueness of the solution. In 1965 Finn \cite{Finn1965} made an important contribution when he proved that the Dirichlet problem for equation \eqref{eq_minima_n2} may not be solvable if the domain is non-convex. That is, this convexity conditions is sharp in order to obtain minimal graphs for arbitray continuous boundary values. 
Using the results from Douglas and Radó, and following a suggestion from Osserman, Finn stated the following {sharp} theorem:
\begin{taggedtheorem}{A}[Douglas-Rad\'o-Finn {\cite[T. 4a p. 146]{Finn1965}}]\label{SharpFinn}
The Dirichlet problem for equation \eqref{eq_minima_n2} has solution in $\W$ for arbitrary continuous boundary data if, and only if, $\W$ is convex. 
\end{taggedtheorem}

In 1966, Jenkins and Serrin \cite{Serrin1968} generalized Theorem \ref{SharpFinn} to higher dimensions: 
\begin{taggedtheorem}{B}[Jenkins-Serrin {\cite[T. 1 p. 171]{Serrin1968}}]\label{SharpJenkinsSerrin}
Let $\W$ be a bounded domain in $\R^n$ whose boundary is of class $\cl^2$. Then the Dirichlet problem for the minimal surface equation ($H=0$) in $\W$ is uniquely solvable for arbitrary $\cl^2$ boundary values if, and only if, the mean curvature of $\partial\W$ is non-negative. 
\end{taggedtheorem}
It can be observed that, although the convexity of $\W$ is not the appropriated generalization of the two-dimensional case (as it was thought), it is again a geometric property. We recall that a domain whose boundary has non-negative mean curvature is called a \textit{mean convex} domain. Throughout the text $\Hc_{\partial\W}$ will denote the mean curvature of $\partial\W$. 



Mathematicians were also questioning about the existence of graphs whose mean curvature not necessarily vanishes. It was Serrin \cite{Serrin} who gave a complete answer: 
\begin{taggedtheorem}{C}[Serrin {\cite[p. 416]{Serrin}}]\label{SharpSerrin} 
Let $\W$ be a bounded domain in $n$-dimensional Euclidean space whose boundary is of class $\cl^2$. Then for every constant $H$ the Dirichlet problem \eqref{ProblemaP} has a unique solution for arbitrary $\cl^2$ boundary data if, and only if, 
\begin{equation}\label{cond_serrin_constante}
(n-1) \Hc_{\partial\W}(y) \geq n H \ \ \forall\ y\in\partial\W.
\end{equation}
\end{taggedtheorem}


This work of Serrin was actually focused on the study of a more general class of Dirichlet problems within which is problem \eqref{ProblemaP}. In fact, Theorem \ref{SharpSerrin} is a direct conclusion of the following result: 
\begin{taggedtheorem}{D}[Serrin {\cite[T. p. 484]{Serrin}}]\label{T_Serrin_Ricci}
Let $\W$ be a bounded domain in $n$-dimensional Euclidean space, whose boundary is of class $\cl^2$. 
Let $H\in\cl^{1}(\overline{\W})$ and suppose that 
\begin{equation}\label{cond_Ricc_Serrin}
\norm{\nabla H(x)}\leq \dfrac{n}{n-1}(H(x))^2\ \forall\ x\in\W.
\end{equation}
Then problem \eqref{ProblemaP} is uniquely solvable for arbitrarily given $\cl^2$ boundary values if, and only if, 
\begin{equation}\label{SerrinCondition}
(n-1)\Hc_{\partial\W}(y)\geq n\mod{H(y)} \ \forall \ y\in\partial\W.
\end{equation}
\end{taggedtheorem}

Since the non-divergence form of the minimal operator is a quasilinear elliptic operator of second order, the basic tools for the solvability of problem \eqref{ProblemaP} come from the theory of partial differential equations. However, it can be seen in Theorem \ref{SharpSerrin} how the existence of graphs with constant mean curvature imposes, as in the minimal case, a geometric restriction over the domain. In the more general case, the existence of a function $H$ satisfying the hypothesis of Theorem \ref{T_Serrin_Ricci} leads to {additional geometric implications on $\W$}. 
This relation between the solvability of problem \eqref{ProblemaP} and the geometry of the domain is due to the fact that the minimal operator is non-uniformly elliptic. 


In order to solve problem \eqref{ProblemaP} the Leray-Schauder fixed point theorem is used in these notes. However, the beautiful Continuity Methods is another tool that can be used for the same purpose. In any case, the application of the method strongly depends on the existence of a priori estimates for the solutions of some ``related problems''. 

These notes are organized as follows. In section \ref{apendice_Anal_equac} is obtained the non-divergence form of the equation of the prescribed mean curvature and some analytical properties about the minimal operator are stated. 
Section \ref{chapterExistence} treats about the existence of solutions of problem \eqref{ProblemaP}. The estimates needed for the existence program are stated in section \ref{chapter_estimates}. The sharpness of the Serrin condition is proved in section \ref{cap_NaoExis}. With the intention of having an understandable text, some of the results from the theory of partial differential equations needed were established in the placed they are used. {At the end of the text it can be found some recent results in more general ambient spaces. 

Finally, we want to point out that these notes definitely do not represent the whole subject. For instance, some regularity issues are left to the reader as a motivation for further studies.}

\section{Analysis of the mean curvature equation}\label{apendice_Anal_equac}

\subsection{The non-divergence form of the mean curvature equation}

Let $S$ be an oriented hypersurface in $\R^{n+1}$. First, recall that if $N$ is a normal vector field along $S$, then the \textit{principal curvatures} of $S$ at a point $p\in S$ are the eigenvalues of the Weingarten map (or shape operator) $A_N$ at $p$ and the \textit{mean curvature} $H$ at $p$ is the average of the principal curvatures at $p$. Therefore,
\begin{equation}\label{curv_media_metrica}
nH=\tr\left(A_N\right)=\ds\sum_{i,j=1}^{n-1} g^{ij}b_{ij}, 
\end{equation}
where $g$ is the induced metric on $S$ and $b_{ij}$ are the coefficients of the second fundamental form\footnote{Recall that the second fundamental form is a symmetric bilinear form and that the Weingarten map is the self-adjoint linear transformation associated. Recall also that if we replace $N$ by $-N$, the principal curvatures change the sign, but the corresponding principal directions remain the same.}. 


In the case where $S$ is the graph over a domain $\W\subset M$ of a function $u\in\cl^2(\W)$ the map
$$ \funcionesp{F}{\W}{\R^{n+1}}{x}{(x,u(x))}$$
is a global parametrization of $S$. The tangents vectors on $S$ are 
$$X_i=\parcial{F}{x_i}=\Ei+ \Di u \Et, \ 1\leq i \leq n,$$ 
where $\{e_1,\dots, e_{n+1}\}$ is the canonical basis on $\R^{n+1}$. Hence, the coefficients of the induced metric $g$ on $S$ and of the inverse matrix $g^{-1}$ are given, respectively, by 
\[    g_{ij}=\escalar{X_i}{X_j}=\delta_{ij}+ \Di u\Dj u\]
and
\begin{equation}\label{inv_metr_S}
    g^{ij}=\delta_{ij}-  \frac{\Di u\Dj u}{W^2}.
\end{equation}



Besides, the unit normal field along $S$ is
\[N=\dfrac{-\nabla u (x) + \Et}{W},    \]
where (and from now on) 
\[W=\sqrt{1+\norm{\nabla u (x)}^2}.\]
Therefore, once
$$\parcialll{F}{x_i}{x_j}=\Dij u e_{n+1},$$ 
the expression for the coefficients of the second fundamental form is
\begin{equation}\label{bij_eq_curv_media_graf_MxR}
b_{ij}=II(X_i,X_j)=\escalar{N}{\parcialll{F}{x_i}{x_j}}=\frac{1}{W}\Dij u.
\end{equation}

Using \eqref{inv_metr_S} and \eqref{bij_eq_curv_media_graf_MxR} in \eqref{curv_media_metrica} it follows that $u$ satisfies the equation
\begin{equation}\label{operador_minimo_1_anal_eq}
\ds\frac{1}{W}\sum_{i,j=1}^n \left(\delta_{ij} - \frac{\Di u \Dj u}{W^2} \right)\Dij u=nH.
\end{equation}
Finally, some algebraic computations show that 
$$n H = \div\left(\dfrac{\nabla u}{\sqrt{1+\norm{\nabla u}^2}}\right). $$

\subsection{Ellipticity of the minimal operator}\label{secao_eq_cur_media}

Note that equation \eqref{operador_minimo_1_anal_eq} is equivalent to
\begin{equation}\label{operador_minimo_1_coord}
\M u:=\ds\sum_{i,j=1}^n \left(W^2\delta_{ij} - {\Di u \Dj u} \right)\Dij u=nH(x){W^3}. 
\end{equation}
We call $\M$ of minimal operator. From now on will also be considered the equation 
\begin{equation}\label{operador_Q}
\Q u:=\M u-nH(x)W^3=0.
\end{equation}

The operator $\M$ is a quasilinear operator of second order. In fact the coefficients of the second order derivatives are given by
\begin{equation}\label{coeff_aij}
a_{ij}(p)=\left((W(p))^2\delta_{ij}-{p_i p_j}\right),
\end{equation}
where $p$ stands for $\nabla u$ and $W(p)=\sqrt{1+\norm{p}^2}$. 
The aim of this section is to prove that the minimal operator $\M$ is strictly elliptic. That is, $A(p)=(a_{ij}(p))$ is positive definite for each fixed $p$, and the infimum of the smallest eigenvalue $\lambda(p)$ of $A(p)$ is positive\footnote{For a precise definition of ellipticity and strict ellipticity we refer that in \cite[p. 259]{GT}. Observe that in the case of the minimal operator $\M$ the matrix $A$ does not depend on $x$.}.  
This is an important property which is required in many of the results from the theory of partial differential equations used in this text. 

In order to prove that $A(p)$ is positive definite, it is just to find the eigenvalues of 
the quadratic form 
\begin{align*}
Q_p(q)=\escalar{{A}(p) q}{q}&=\sum_{i,j=1}^n\left((W(p))^2\delta_{ij}-p_i p_j\right)q_iq_j =\left(1+\norm{p}^2\right)\norm{q}^2-\escalar{p}{q}^2.
\end{align*}
If $q=p$ it follows 
\begin{align*}
\escalar{{A}(p) p}{p}&=\left(1+\norm{p}^2\right)\norm{p}^2-\norm{p}^4=\norm{p}^2, 
\end{align*}
and for $q\bot p$ 
\begin{align*}
\escalar{{A}(p) q}{q}&=\left(1+\norm{p}^2\right)\norm{q}^2.
\end{align*}
Therefore, the smallest eigenvalues of ${A}(p)$ is $\lambda(p)=\lambda= 1$ with associated eigenspace $\spann\{p\}$. Consequently, $\M$ and $\Q$ are strictly elliptic operators. 

Observe also that $\Lambda(p)=1+\norm{p}^2$ is the largest {(and there is no other)} eigenvalue of ${A}(p)$ whose associated eigenspace is ${p}^{\bot}$. Since $\Lambda(p)\rightarrow\infty$ as $\norm{p}\rightarrow\infty$, $\M$ and $\Q$ are non-uniformly elliptic operators. {This is the reason why the geometry of the domain is important for the solvability of problem \eqref{ProblemaP} (see \cite[\S 12.4 p. 309, p. 345]{GT}).}

\subsection{Maximum Principles}
{
The maximum principles are important tools used in the study of second order elliptic equations. The following theorem is restricted to the particular case of the operator $\Q$ defined in \eqref{operador_Q}.
\begin{teo}[Comparison principle {\cite[Th. 10.1 p. 263]{GT}}]\label{PM_quasilineares}
Let $\W\in\R^n$ be a bounded domain and $u,\ v\in\cl^2(\W)\cap\cl^0(\overline{\W})$ satisfying 
$$\left\{
\begin{split}
    \Q u&\geq \Q v \mbox{ in } \W,\\
     u&\leq v  \mbox{ in } \partial\W.
\end{split}\right.$$
Then $u\leq v$ in $\W$. 
\end{teo}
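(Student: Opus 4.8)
The plan is to reduce the statement to the classical comparison principle for quasilinear elliptic operators in divergence form — or, equivalently, to a linearization argument — and to exploit the structure of $\Q$ established in Section~\ref{secao_eq_cur_media}. First I would recall that $\Q u = \sum_{i,j} a_{ij}(\nabla u)\Dij u - nH(x)W(\nabla u)^3$, where $a_{ij}(p) = (1+\norm{p}^2)\delta_{ij} - p_ip_j$ and the lower-order term $b(x,p) = -nH(x)(1+\norm{p}^2)^{3/2}$ does not depend on $u$ itself, only on $x$ and $\nabla u$. The key point is that $\Q$ has no explicit dependence on the unknown function $u$ (only on its gradient and on $x$), so there is no zeroth-order term to worry about; this is precisely the favorable situation in which the comparison principle holds without a sign condition on $c$.

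The main step is a linearization along the segment joining $u$ and $v$. Set $w = u - v$ and, for $t\in[0,1]$, let $u_t = tu + (1-t)v$, so that $\nabla u_t = \nabla v + t\nabla w$. Writing
\begin{equation*}
\Q u - \Q v = \int_0^1 \frac{d}{dt}\,\Q(u_t)\,dt,
\end{equation*}
one obtains, after differentiating under the integral sign, an identity of the form
\begin{equation*}
\Q u - \Q v = \sum_{i,j} \tilde a_{ij}(x)\,\Dij w + \sum_i \tilde b_i(x)\,\Di w =: \LL w,
\end{equation*}
where $\tilde a_{ij}(x) = \int_0^1 a_{ij}(\nabla u_t(x))\,dt$ and $\tilde b_i(x)$ collects the terms coming from differentiating $a_{ij}(\nabla u_t)$ against $\Dij u_t$ and from differentiating $b(x,\nabla u_t) = -nH(x)W(\nabla u_t)^3$ in its gradient slot; both sets of coefficients are continuous (indeed bounded on compact subsets of $\W$, since $u,v\in\cl^2(\W)$). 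Crucially, $\LL$ has no zeroth-order coefficient ($c\equiv 0$). Since each $A(\nabla u_t(x))$ is positive definite with smallest eigenvalue $1$ (as computed in Section~\ref{secao_eq_cur_media}), the average $(\tilde a_{ij}(x))$ is also positive definite, so $\LL$ is a (locally) strictly elliptic linear operator.

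Now the hypothesis reads $\LL w = \Q u - \Q v \geq 0$ in $\W$ and $w \leq 0$ on $\partial\W$, and the conclusion $w \leq 0$ in $\W$ is exactly the weak maximum principle for the linear elliptic operator $\LL$ with $c = 0$ (cf.\ \cite[Cor.~3.2 p.~33]{GT}), which requires only local ellipticity and local boundedness of the coefficients plus $\cl^2(\W)\cap\cl^0(\overline\W)$ regularity of $w$ — all of which we have. I expect the main obstacle to be purely bookkeeping: carrying out the differentiation $\frac{d}{dt}\Q(u_t)$ cleanly so that the result is genuinely of the form $\sum \tilde a_{ij}\Dij w + \sum \tilde b_i \Di w$ with no leftover terms, and checking that the coefficients are well enough behaved (continuity and local boundedness) for the linear maximum principle to apply on $\W$; the ellipticity itself is automatic from the eigenvalue computation already done, and the absence of a zeroth-order term is what makes the argument go through without any smallness or sign assumption.
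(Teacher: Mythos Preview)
Your proposal is correct and follows essentially the same approach as the paper: linearize $\Q u-\Q v$ into a linear elliptic operator $\LL w$ with no zeroth-order term, then invoke the weak maximum principle for $\LL$. The only cosmetic difference is in the linearization device: the paper keeps the principal part as $\sum_{ij}a_{ij}(\nabla u)\Dij w$ and applies the one-dimensional mean value theorem to the remaining piece $f_x(\nabla u)-f_x(\nabla v)$ (with $f_x(p)=\sum_{ij}a_{ij}(p)\Dij v(x)-nH(x)W(p)^3$) to produce the first-order coefficients $b_i$, whereas you average over the full segment and obtain $\tilde a_{ij}=\int_0^1 a_{ij}(\nabla u_t)\,dt$; both yield a locally strictly elliptic operator with locally bounded coefficients, so the conclusion is identical.
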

\begin{proof} Let $w=u-v$. 
Recalling the expression for the coefficients $a_{ij}$ given in \eqref{coeff_aij} it follows
\begin{align*}
    \Q u - \Q v
    =&\ds\sum_{ij}a_{ij}(\nabla u)\Dij w+\ds\sum_{ij}a_{ij}(\nabla u)\Dij v-\ds\sum_{ij}a_{ij}(\nabla v)\Dij v.
\end{align*}
For each $x\in\W$ let us define 
$$f_x(p)=\ds\sum_{ij}a_{ij}(p)D_{ij}v(x)-nH(x)(W(p))^3.$$
Applying the mean value theorem to the function
$$\varphi_x(t)=f((1-t)\nabla u(x)+t\nabla v(x)),$$
we obtain
\begin{align*}
\Q u-\Q v = & \ds\sum_{ij}a_{ij}(\nabla u(x))\Dij w + \ds\sum_{i}b_i(x)\Di w,
\end{align*}
where
\begin{align*}
{b}_i(x)=&\ds\sum_{kl} \dfrac{\partial a_{kl}}{\partial p_i}(x,(1-t(x))\nabla u(x)+t(x)\nabla v(x))D_{ij}v(x)\\&-nH(x)\dfrac{\partial \left(W^3\right)}{\partial p_i}(x,(1-t(x))\nabla u(x)+t(x)\nabla v(x)).
\end{align*}
So, $\LL w:=\Q u-\Q v\geq 0$. Since also $w\leq 0$ in $\partial\W$, then $w\leq 0$ in $\W$ as a direct consequence of the following theorem for linear operators.

\medskip
\noindent\colorbox{shadecolor}{
\begin{minipage}{.975\textwidth}
\begin{teorema}[Week maximum principle {\cite[Th. 3.1 p. 32]{GT}}]\label{WMPrinciple}
Let $\Omega\subset\mathbb{R}^n$ be a bounded domain. Assume that $u\in\cl^2(\W)\cap\cl^0(\overline{\W})$ satisfies 
$$\LL u=\ds\sum_{ij}a_{ij}(x)\Dij u+\ds\sum_{i}b_i(x)\Di u \geq 0 \mbox{ in }\W,$$ 
where $\LL$ is elliptic and the coefficients $a_{ij}$ and $b_i$ are locally bounded. 
Then
$$ \sup_{\W} u =\sup_{\partial\W}u.$$
\end{teorema}
\end{minipage}}
\medskip

\noindent Indeed, it can easily be verified that $\LL$ satisfies the hypothesis of this theorem. 
\end{proof}
}

The following proposition is an important tool in section \ref{cap_NaoExis}. 
This variant of the comparison principle traces its roots back to the work of Finn \cite[Lemma p. 139]{Finn1965} about the minimal surface equation in domains of the plane. 
His lemma was extended by Jenkins-Serrin {\cite[Prop. III p. 182]{Serrin1968}} for the minimal hypersurface equation in $\R^n$, and subsequently by Serrin \cite[Th. 1 p. 459]{Serrin} for more general quasilinear elliptic operators (see also \cite[Th. 14.10 p. 347]{GT}). {We just refer here to the operator $\Q$ defined in \eqref{operador_Q}.}
\begin{prop}[{\cite[L. 3.4.3 p. 109]{han2016nonlinear}}]\label{M_prop_gen_JS}
Let $\W\in M$ be a bounded domain and $\Gamma$ a relative open portion of $\partial\W$ of class $\cl^1$. For $H\in\cl^{0}(\overline{\W})$, let $u\in\cl^2(\W)\cap\cl^1(\W\cup \Gamma)\cap\cl^0(\overline{\W})$ and $v\in\cl^2(\W)\cap\cl^0(\overline{\W})$ satisfying
$$
\left\{ \begin{array}{cl}
\Q u \geq \Q v &\mbox{in } \W,\\
   u \leq v &\mbox{in } \partial\W\setminus\Gamma,\\
	\parcial{v}{N}=-\infty &\mbox{in }\Gamma,
\end{array} \right.
$$
where $N$ is the inner unit normal to $\Gamma$. Then $u\leq v$ in $\Gamma$. Consequently, $u\leq v$ in $\W$.
\end{prop}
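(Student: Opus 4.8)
The plan is to reduce this to the ordinary comparison principle (Theorem \ref{PM_quasilineares}) by handling the ``free boundary'' portion $\Gamma$ through an exhaustion argument, exploiting the fact that $v$ becomes vertical along $\Gamma$. The difficulty is precisely that we cannot apply Theorem \ref{PM_quasilineares} directly on $\W$ because we only control $u\le v$ on $\partial\W\setminus\Gamma$, not on all of $\partial\W$; in fact $v$ need not even be continuous up to $\Gamma$ in the naive sense, and its normal derivative blows up there. So the whole game is to show that the hypothesis $\partial v/\partial N=-\infty$ on $\Gamma$ forces $u\le v$ near $\Gamma$ ``for free,'' after which the standard principle closes the argument.

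Here is how I would carry it out. First I would argue by contradiction: suppose $\sup_{\W}(u-v)=:m>0$. Since $u\le v$ on $\partial\W\setminus\Gamma$, the set where $u-v$ is close to its maximum must cluster either at an interior point of $\W$ or at a point of $\Gamma$. Second, I would rule out the interior case: if the supremum were attained (or approached) in the interior, then on a sublevel set $\W' = \{x\in\W : u(x)-v(x) > m/2\}$ (a nonempty open set whose boundary meets $\partial\W$ only inside $\Gamma$, provided $m$ is chosen to exceed $\sup_{\partial\W\setminus\Gamma}(u-v)$, which is $\le 0$), we have $u-v \equiv m/2$ on $\partial\W'\cap\W$ and $u-v$ could still hit values near $m$ inside; but $\Q u\ge\Q v$ means $w=u-v$ is a subsolution of the linear elliptic operator $\LL$ constructed exactly as in the proof of Theorem \ref{PM_quasilineares} (freeze coefficients along the segment joining $\nabla u$ and $\nabla v$), so by the weak maximum principle (Theorem \ref{WMPrinciple}) $w$ cannot have an interior maximum exceeding its boundary values on $\W'$. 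Hence the supremum $m$ can only be approached along $\Gamma$.

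Third — and this is the heart of the matter — I would exclude the boundary case using the verticality of $v$ on $\Gamma$. Fix a point $y_0\in\Gamma$ near which $\sup(u-v)$ is approached. Since $u\in\cl^1(\W\cup\Gamma)$, the gradient $\nabla u$ is bounded near $y_0$, in particular its normal component $\partial u/\partial N$ is finite there. Since $\partial v/\partial N = -\infty$ on $\Gamma$, for points $x$ in a small one-sided neighborhood of $y_0$ inside $\W$ we get $\partial(u-v)/\partial N(x) = \partial u/\partial N(x) - \partial v/\partial N(x) \to +\infty$ as $x\to\Gamma$; so moving inward from $\Gamma$ the function $w=u-v$ is strictly decreasing along normal lines near $\Gamma$ — equivalently, $w(x) < w(y_0')$ is impossible to sustain, i.e. the values of $w$ in the interior near $\Gamma$ are strictly smaller than the ``boundary trace'' of $w$ at $\Gamma$. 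Combined with step two (no interior max), this means $w$ attains no value close to $m>0$ anywhere, contradicting $\sup w = m$. The cleanest packaging is to choose a small inner region bounded partly by $\Gamma$ and partly by a nearby parallel hypersurface $\Gamma_\delta$ at normal distance $\delta$: on $\Gamma_\delta$ one has $w\le m-\eta(\delta)$ by the verticality estimate with $\eta(\delta)>0$, on $\partial\W\setminus\Gamma$ one has $w\le 0$, so by Theorem \ref{PM_quasilineares} applied on this region $w\le m-\eta(\delta)$ throughout it, and letting $\delta\to0$ gives $w\le \limsup$ which still misses $m$ — contradiction. Therefore $m\le 0$, i.e. $u\le v$ on $\Gamma$, and then $u\le v$ on all of $\W$ follows from Theorem \ref{PM_quasilineares} on $\W$ itself (now that we know $u\le v$ on the entire boundary $\partial\W = (\partial\W\setminus\Gamma)\cup\Gamma$, interpreting the inequality on $\Gamma$ in the appropriate boundary-trace sense).

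The main obstacle I anticipate is making the third step rigorous: the condition $\partial v/\partial N = -\infty$ on $\Gamma$ has to be interpreted carefully (it is a statement about the limit of the inner normal derivative, or equivalently that the graph of $v$ is asymptotically vertical over $\Gamma$), and one must control $w$ in a genuine neighborhood of $\Gamma$, not just along individual normal rays — this requires either a barrier comparison with a suitably steep function on a boundary collar, or a careful exhaustion of $\W$ by domains $\W_\delta$ whose boundaries approach $\Gamma$ from inside, on each of which the ordinary comparison principle applies. One also needs $\Gamma$ of class $\cl^1$ exactly so that the normal collar and the notion of $\partial/\partial N$ make sense. Everything else — the reduction of $\Q u - \Q v \ge 0$ to a linear elliptic differential inequality $\LL w \ge 0$, and the invocation of the weak maximum principle — is identical to the proof of Theorem \ref{PM_quasilineares} already given.
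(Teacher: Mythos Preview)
Your overall instinct---use the blow-up of $\partial v/\partial N$ on $\Gamma$ to contradict a boundary maximum of $w=u-v$---is right, but your execution in step three contains a sign error that breaks the argument. You write that $\partial w/\partial N\to+\infty$ near $\Gamma$ implies ``moving inward from $\Gamma$ the function $w$ is strictly decreasing.'' Since $N$ is the \emph{inner} normal, a positive inner normal derivative means $w$ \emph{increases} as you move into $\W$; interior values of $w$ near $\Gamma$ are therefore \emph{larger} than the boundary trace, not smaller. Consequently your claim ``on $\Gamma_\delta$ one has $w\le m-\eta(\delta)$'' goes the wrong way, and the exhaustion packaging collapses. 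Your sublevel-set discussion in step two is also more complicated than needed: the weak maximum principle already gives $\sup_{\overline\W}w=\sup_{\partial\W}w$ directly.

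The paper's proof avoids all of this with a single clean trick you are missing: set $m=\max_\Gamma(u-v)$ and compare $u$ not with $v$ but with $v+m$. Then $u\le v+m$ on \emph{all} of $\partial\W$ (on $\partial\W\setminus\Gamma$ since $u\le v\le v+m$, on $\Gamma$ by definition of $m$), and $\Q(v+m)=\Q v\le\Q u$, so Theorem~\ref{PM_quasilineares} applies on $\W$ itself to give $u\le v+m$ throughout. Now at a point $y_0\in\Gamma$ where $u(y_0)-v(y_0)=m$, the global inequality $u-(v+m)\le 0$ with equality at $y_0$ yields, along the inner normal ray,
\[
\frac{u(y_0+tN)-u(y_0)}{t}\le\frac{v(y_0+tN)-v(y_0)}{t},
\]
and letting $t\downarrow 0$ forces $\partial u/\partial N(y_0)\le\partial v/\partial N(y_0)=-\infty$, contradicting $u\in\cl^1(\W\cup\Gamma)$. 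No exhaustion, no collar, no sublevel sets---just shift by $m$ and differentiate once.
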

\begin{proof}
Suppose by contradiction that $m=\ds\max_{\Gamma} (u-v) > 0.$
Hence, $u \leq v + m $ in $\Gamma$. Then $u\leq v+m$ in $\partial\W$ since $u\leq v$ in $\partial\W\setminus\Gamma$ by hypotheses.
Also,
$$\Q(v+m)=\Q v \leq \Q u.$$
As a consequence of Theorem \ref{PM_quasilineares}, $u\leq v+m$ in $\W$.

Let now $y_0\in \Gamma$ be such that $m = u(y_0)-v(y_0)$. 
Then, for $t>0$ near $0$ one has
\begin{align*}
u(y_0+tN_{y_0})-u(y_0)
\leq	 \left(v\left(y_0+tN_{y_0}\right)+m\right)-\left(v(y_0)+m\right)= v(y_0+tN_{y_0})-v(y_0).
\end{align*}
Dividing the expression by $t$ and passing to the limit as $t$ goes to zero it follows that 
$$ \parcial{u}{N}\leq \parcial{v}{N} =-\infty,$$ 
which is a contradiction since $u\in\cl^1(\Gamma)$. Hence, $u\leq v$ in $\Gamma$. Using again Theorem \ref{PM_quasilineares} one has $u\leq v$ in $\W$. 
\end{proof}



\subsection{Transformation formulas for the mean curvature equation}\label{apend_form_transform}

The goal in this section is to derive some transformation formulas that are used in the next sections. Note first that the operator $\M$ defined in \eqref{operador_minimo_1_coord} can be written as 
\begin{equation}\label{equ_curv_media_MxR_Delta_2}
\M u
=W^2\Delta u(x) - \escalar{\Hess  u \cdot \nabla u}{\nabla u},
\end{equation}
where $\Hess$ denotes the Hessian of a function (the matrix of second derivatives). 


Let $\W\in \R^n$ be a bounded domain and $\varphi\in\cl^2(\overline{\W})$. Let $I$ be an interval and $\psi\in\cl^2(I)$. 
Let us now define
$$w=\psi\circ \varrho + \varphi,$$ 
where $\varrho$ is a distance function\footnote{Recall that if $\varrho:\W\rightarrow \R$ is a distance function, then $\nabla \varrho \equiv 1$ on $\W$ \cite[p. 41]{petersen1998}.}.  
Then, $\Di w=\psi'\Di\varrho+\Di\varphi$ and $\nabla w=\psi'\nabla \varrho+\nabla \varphi$. Thus,
\begin{align}
\Hessij w &=\psi'' \Di\varrho\Dj\varrho   + \psi' \Hessij \varrho + \Hessij \varphi\label{hess_coord_1},
\end{align}
and 
\[\Hess  w=\psi'' \nabla \varrho \otimes \nabla \varrho + \psi' \Hess \varrho +\Hess \varphi .
\]
Since $\varrho$ is a distance function it follows
\begin{equation}\label{Laplaciano_w}
\Delta w(x)=\tr \left(\Hess  w\right) =\psi''\norm{\nabla \varrho}^2+\psi'\Delta \varrho + \Delta\varphi=\psi''+\psi'\Delta \varrho + \Delta\varphi.
\end{equation}
Besides\footnote{Recall that $\left(u\otimes v\right)w=\escalar{v}{w}u$ for any vectors $u$, $v$ and $w$.},
\begin{align*}
&\escalar{\Hess  w \cdot \nabla w}{\nabla w} \\
=&\psi'' \escalar{\nabla  \varrho}{\nabla w}^2  + \psi' \escalar{\Hess \varrho\cdot \nabla w}{\nabla w}+\escalar{\Hess \varphi \cdot \nabla w}{\nabla w}\\
=&\psi'' \escalar{\nabla  \varrho}{\psi' \nabla\varrho + \nabla \varphi}^2  + \psi' \escalar{\Hess \varrho \cdot \left(\psi' \nabla\varrho + \nabla \varphi\right)}{ \psi' \nabla\varrho + \nabla \varphi}\\
&+\escalar{ \Hess \varphi\cdot\left(\psi' \nabla\varrho + \nabla \varphi\right)}{\psi' \nabla\varrho + \nabla \varphi}\\
=&\psi'' \escalar{\nabla  \varrho}{\psi' \nabla\varrho + \nabla \varphi}^2\\
&+\psi'\left(\psi'^2 \escalar{ \Hess \varrho \cdot \nabla\varrho}{\nabla\varrho} + 2\psi' \escalar{\Hess \varrho\cdot\nabla\varrho}{\nabla \varphi} + \escalar{\Hess \varrho\cdot\nabla \varphi}{\nabla \varphi}\right)\\
&+\escalar{\Hess \varphi\cdot\left(\psi' \nabla\varrho + \nabla \varphi\right)}{\psi' \nabla\varrho + \nabla \varphi}.
\end{align*}
{Recalling again that $\varrho$ is a distance function one has
\begin{align*}
\escalar{\Hess \varrho\cdot \nabla \varrho}{e_j}=\ds\sum_{i} \Dij \varrho \Di \varrho =\escalar{\Dj \nabla\varrho}{\nabla\varrho} = \frac{1}{2} \Dj \left(\norm{\nabla \varrho }^2\right)=0\ \ \forall \ 1\leq j \leq n. 
\end{align*}}
As a consequence,
\begin{equation}\label{D2w}
\begin{array}{r}
\escalar{\Hess  w \cdot \nabla w}{\nabla w} =  \psi'' \escalar{\nabla  \varrho}{\psi' \nabla\varrho + \nabla \varphi}^2 + \psi' \escalar{\Hess \varrho \cdot \nabla \varphi}{\nabla \varphi}\\
+ \escalar{\Hess \varphi \cdot \left(\psi' \nabla\varrho + \nabla \varphi\right)}{\psi' \nabla\varrho + \nabla \varphi}.
\end{array}
\end{equation}

{From \eqref{equ_curv_media_MxR_Delta_2}, \eqref{Laplaciano_w} and \eqref{D2w} we derive}
\begin{align*}
\M w=&W^2(\psi''+\psi'\Delta \varrho + \Delta\varphi)-\left(\psi''\escalar{\nabla  \varrho}{\psi' \nabla\varrho + \nabla \varphi}^2 + 
\psi'\escalar{\Hess \varrho\cdot \nabla \varphi}{\nabla \varphi}\right.\\
&\pushright{\left.+ \escalar{\Hess \varphi\cdot \left(\psi' \nabla\varrho + \nabla \varphi\right)}{\psi' \nabla\varrho + \nabla \varphi}\right),} 
\end{align*}
where $W=W(\nabla w)=\sqrt{1+\norm{\psi'\nabla \varrho +\nabla \varphi}^2}$. Therefore, 
\begin{equation}\label{Eq_trans_varphi}
\begin{split}
\M w=&\psi'W^2\Delta\varrho -\psi'\escalar{\Hess \varrho\cdot\nabla \varphi}{\nabla \varphi}  \\
&\psi''W^2-\psi''\escalar{\nabla  \varrho}{\psi' \nabla\varrho + \nabla \varphi}^2\\
& \Delta \varphi W^2   - \escalar{\Hess \varphi\cdot\left(\psi' \nabla\varrho + \nabla \varphi\right)}{\psi' \nabla\varrho + \nabla \varphi} .
\end{split}
\end{equation}

Furthermore, if $\varphi$ is constant, then $W=\sqrt{1+\psi'^2}$ and 
\begin{equation}\label{calc_Q_w}
\M w =  \psi'(1+\psi'^2) \Delta \varrho + \psi''.
\end{equation}


\section{The existence program}\label{chapterExistence}

In this section we carry out with the existence program for the Dirichlet problem \eqref{ProblemaP}. It will be seen how the elliptic theory assures that the solvability of problem \eqref{ProblemaP} strongly depends on $\cl^{1}$ a priori estimates for the family of related problems 
\begin{equation}\tag{$P_{\tau}$}\label{ProblemaPsigma}
\left\{\begin{array}{l}
 \M u  = \tau nH(x)W^3\ \mbox{ in }\ \W, \\
 \phantom{\M}      u  = \tau \varphi \ \mbox{ in }\  \partial\Omega,
\end{array}\right.
\end{equation}
not depending on $\tau$ or $u$\footnote{Note that problem \eqref{ProblemaP} is equivalent to problem $(P_{\tau=1})$.}. Actually, the following theorem (which is also valid for more general elliptic operators) holds.
\begin{teo}[{\cite[T. 11.4 p. 281]{GT}}]\label{T_Exist_quaselineares}
Let $\Omega\subset \R^n$ be a bounded domain with $\partial\W$ of class $\cl^{2,\alpha}$, $\varphi\in \cl^{2,\alpha}(\overline{\Omega})$ and {$H\in\cl^{\alpha}(\overline{\W})$} for some $\alpha\in(0,1)$. Assume there exists $M>0$ independent of $u$ and $\tau$ such that any solution $u$ of the related problems \eqref{ProblemaPsigma} satisfies $\norm{u}_{\cl^{1}(\overline{\W})}<M$. 
Then the Dirichlet problem \eqref{ProblemaP} has a unique solution in $\cl^{2,\alpha}(\overline{\W})$.
\end{teo}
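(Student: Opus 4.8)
The plan is to deduce Theorem \ref{T_Exist_quaselineares} from the Leray--Schauder fixed point theorem, exactly along the lines sketched in the introduction. First I would recall the version of the Leray--Schauder theorem adapted to this setting (see \cite[Th. 11.3 p. 280]{GT}): if $\mathfrak{T}\colon \cl^{1,\beta}(\overline{\W})\times[0,1]\to\cl^{1,\beta}(\overline{\W})$ is a compact map with $\mathfrak{T}(u,0)$ independent of $u$, and if there is a constant $M$ bounding $\|u\|_{\cl^{1,\beta}(\overline{\W})}$ for all fixed points of $\mathfrak{T}(\cdot,\tau)$ uniformly in $\tau$, then $\mathfrak{T}(\cdot,1)$ has a fixed point. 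The strategy is to build $\mathfrak{T}$ so that its fixed points at parameter $\tau$ are precisely the solutions of \eqref{ProblemaPsigma}.

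The construction of $\mathfrak{T}$ goes through the linear theory. Given $v\in\cl^{1,\beta}(\overline{\W})$ and $\tau\in[0,1]$, freeze the gradient of $v$ in the coefficients and consider the linear Dirichlet problem of finding $u$ with $\sum_{i,j}a_{ij}(\nabla v)\Dij u = \tau n H(x)(W(\nabla v))^3$ in $\W$ and $u=\tau\varphi$ on $\partial\W$. Since the $a_{ij}(\nabla v)$ are Hölder continuous on $\overline{\W}$ (composition of smooth $a_{ij}$ with $\nabla v\in\cl^{\beta}$, after choosing $\beta\le\alpha$), the matrix $A(\nabla v)$ is uniformly positive definite with smallest eigenvalue $1$ by the computation in \S\ref{secao_eq_cur_media}, and the right-hand side and boundary data are Hölder of the appropriate order, the Schauder theory \cite[Th. 6.14 p. 107]{GT} gives a unique solution $u\in\cl^{2,\beta}(\overline{\W})$; set $\mathfrak{T}(v,\tau)=u$. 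Then $u$ solves \eqref{ProblemaPsigma} if and only if $u=\mathfrak{T}(u,\tau)$, and $\mathfrak{T}(v,0)\equiv 0$. Compactness of $\mathfrak{T}$ follows because $\mathfrak{T}$ maps bounded sets of $\cl^{1,\beta}(\overline{\W})$ into bounded sets of $\cl^{2,\beta}(\overline{\W})$ (uniform Schauder estimates, using that the ellipticity constant is $1$ and the coefficient norms stay bounded on bounded sets) and $\cl^{2,\beta}(\overline{\W})\hookrightarrow\cl^{1,\beta}(\overline{\W})$ is compact; continuity of $\mathfrak{T}$ in $(v,\tau)$ is a standard consequence of the linear estimates together with uniqueness.

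With $\mathfrak{T}$ in hand, the hypothesis of the theorem supplies a bound $\|u\|_{\cl^1(\overline{\W})}<M$ for every fixed point of $\mathfrak{T}(\cdot,\tau)$, uniformly in $\tau$; here one should note that this $\cl^1$ bound is automatically upgraded to a $\cl^{1,\beta}$ bound. This last upgrade is where I expect the real work to be: from an a priori $\cl^0$ and $\cl^1$ bound one obtains, via the global gradient-Hölder estimates for quasilinear elliptic equations (\cite[Th. 13.7 p. 323]{GT}, whose hypotheses require exactly control of $|u|$ and $|\nabla u|$ together with the structure conditions on $\M u - \tau n H W^3$), a uniform bound on $\|u\|_{\cl^{1,\beta}(\overline{\W})}$ for some $\beta\in(0,\alpha]$ depending only on the data. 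This places all fixed points in a fixed ball of $\cl^{1,\beta}(\overline{\W})$, so Leray--Schauder applies and yields a fixed point $u\in\cl^{2,\beta}(\overline{\W})$ of $\mathfrak{T}(\cdot,1)$, i.e.\ a solution of \eqref{ProblemaP}. Finally, since $\varphi\in\cl^{2,\alpha}(\overline{\W})$ and $H\in\cl^{\alpha}(\overline{\W})$, a bootstrap with the Schauder interior and boundary estimates promotes $u$ to $\cl^{2,\alpha}(\overline{\W})$, and uniqueness is immediate from the comparison principle, Theorem \ref{PM_quasilineares}, applied to two solutions in both directions.
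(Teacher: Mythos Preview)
Your proposal is correct and follows essentially the same route as the paper: define a solution operator on $\cl^{1,\beta}(\overline{\W})$ via the linear Schauder theory, verify compactness and continuity through the global Schauder estimates, upgrade the assumed $\cl^1$ bound to $\cl^{1,\beta}$ via the Ladyzhenskaya--Ural'tseva estimate \cite[Th.~13.7]{GT}, apply Leray--Schauder, and finish with a regularity bootstrap and the comparison principle. The only cosmetic difference is that the paper defines a single operator $T$ (independent of $\tau$) and uses the $u=\tau Tu$ formulation of Leray--Schauder, observing that by linearity $\tau Tv$ solves exactly the problem you call $\mathfrak{T}(v,\tau)$; your parametrized map $\mathfrak{T}(v,\tau)=\tau\,Tv$ is literally the same object.
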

\begin{proof}
Let $\beta\in[0,1]$ to be made precise later. 
For each $v\in \cl^{1,\beta}(\overline{\Omega})$ we consider the linear operator 
\begin{equation}\label{definitionLL}
\LL^v u: =\ds\sum_{ij} a_{ij}^v(x)\Dij u ,
\end{equation}
where
$$ a_{ij}^v(x)=({W_v(x)})^2\delta_{ij}-{\Di v(x) \Dj v(x)},\ W_v(x)=W(\nabla v(x))=\sqrt{1+\nabla v(x)}.$$

We now want to study the following linear problem
\begin{equation}\tag{$LP_v$}\label{ProblemaPv}
\left\{\begin{array}{rl}
\LL^v u & = nH(x)({W_v(x)})^3\ \mbox{ in }\ \W, \\
      u & = \varphi \ \mbox{ in }\  \partial\Omega.
\end{array}\right.
\end{equation}

The following theorem for linear operators guaranties the existence of a unique solution $u^v\in\cl^{2,\alpha\beta}(\overline{\W})\subset\cl^{1,\beta}(\overline{\W})$ of problem \eqref{ProblemaPv}. 

\medskip
\noindent\colorbox{shadecolor}{
\begin{minipage}{.97\textwidth}
\begin{teorema}[Schauder {\cite[T. 6.14 p. 107]{GT}}]
Let $\Omega\subset\mathbb{R}^n$ be a $\cl^{2,\alpha}$ bounded domain and $\varphi\in \cl^{2,\alpha}(\overline{\Omega})$ for some $\alpha\in(0,1)$. Let 
$\LL u=\ds\sum_{ij}a_{ij}(x)\Dij u+\ds\sum_{i}b_i(x)\Di u +c(x)u$ 
be a strictly elliptic operator in $\W$ with coefficients in $\cl^{\alpha}(\overline{\W})$ and $c\leq 0$. Let also $f$ be a function in $\cl^{\alpha}(\overline{\W})$. Then the problem 
\begin{align*}
\left\{\begin{array}{l}
\LL u=f \textrm{ in } \Omega,\\
\phantom{\LL} u=\varphi\textrm{ in } \partial\Omega,
\end{array}\right.
\end{align*}
has a unique solution in $\cl^{2,\alpha}(\overline{\Omega})$.
\end{teorema}
\end{minipage}}
\medskip

In fact, notice that $\partial\W$ is of class $\cl^{2,\alpha\beta}$ and $\varphi\in\cl^{2,\alpha\beta}(\overline{\W})$. 
Also, $nHW_v^3$ and $a_{ij}^v$ belong to $\cl^{\alpha\beta}(\overline{\W})$ since $v\in\cl^{1,\beta}(\overline{\W})$, $H\in\cl^{\alpha}(\overline{\W})$ and the coefficients $a_{ij}$ are regular. 
Besides, $\LL^v$ is strictly elliptic because it was proved in section \ref{secao_eq_cur_media} that the infimum of the smallest eigenvalue of the matrix $(a_{ij}(\nabla v(x)))=(a_{ij}^v(x))$ is 1. 
Therefore, the operator
$$\funciones{T}{\cl^{1,\beta}(\overline{\W})}{\cl^{1,\beta}(\overline{\W})}{v}{u^v} $$
is well defined. 

In addition, the solvability of problem \eqref{ProblemaP} is equivalent to the existence of a fixed point of $T$. Indeed, since $Tv$ is the only solution of \eqref{ProblemaPv}, that is, 
\begin{equation*}
\left\{
\begin{split}
\sum_{i,j=1}^n \left(\left({W_v(x)}\right)^{2}\delta_{ij} - {\Di v \Dj v} \right)\Dij (Tv) &=nH(x) \left({W_v(x)}\right)^{3}\ \mbox{in}\ \W,\\
Tv&=\varphi \ \mbox{in}\ \partial\W,
\end{split}\right.
\end{equation*}
then the existence of a function $u\in\cl^{1,\beta}(\overline{\W})$ satisfying $Tu=u$ is exactly problem \eqref{ProblemaP}. {So, it gets evident the need for a theorem guaranteeing the existence of a fixed point of $T$. The following theorem is enough for our purpose.}

\medskip
\noindent\colorbox{shadecolor}{
\begin{minipage}{.975\textwidth}
\begin{teorema}[Leray-Schauder {\cite[T. 11.3 p. 280]{GT}}]
Let $\mathcal{B}$ be a Banach space and $T: \mathcal{B} \rightarrow \mathcal{B}$ a completely continuous operator\footnotemark. 
If there exists $M>0$ such that 
\begin{equation*}
\norm{x}_\mathcal{B} < M
\end{equation*}
for each $\tau\in [0,1]$ and $x\in \mathcal{B}$ satisfying $x=\tau T x$, then $T$ has a fixed point. 
\end{teorema}
\end{minipage}}
\footnotetext{We recall that a continuous mapping between two Banach spaces is called \textit{compact} or \textit{completely continuous} if the images of bounded sets are precompact.}

\medskip
In order to use the Leray-Schauder fixed point theorem we observe first that the family of solutions of the related problems \eqref{ProblemaPsigma} is not empty once $u = 0$ obviously satisfies $(P_0)$. Also $\mathcal{B}=\cl^{1,\beta}(\overline{\W})$ is a Banach space. 

In addition, since \eqref{ProblemaPv} holds for each $v\in\cl^{1,\beta}(\overline{\W})$, then for $\tau \in[0,1]$ it follows
\begin{equation}
\label{ProblemaPvtau}
\left\{
\begin{split}
\sum_{i,j=1}^n \left(\left({W_v(x)}\right)^2\delta_{ij} - {\Di v \Dj v} \right)\Dij (\tau Tv) &=\tau nH(x) \left({W_v(x)}\right)^{3}\ \mbox{in}\ \W,\\
\tau Tv&=\tau \varphi \ \mbox{in}\ \partial\W.
\end{split}\right.
\end{equation}
So, equation $\tau T u=u$ is equivalent to problem \eqref{ProblemaPsigma}, and the family of solutions of these problems is bounded in $\cl^1(\overline{\W})$ from the hypotheses.

The following global Hölder estimate for quasilinear operators of second order guarantees that we actually have an a priori bound in $\cl^{1,\beta}(\overline{\W})$ for some $\beta\in(0,1)$.
 
\medskip
\noindent\colorbox{shadecolor}{%
\begin{minipage}{.985\textwidth}
\begin{teorema}[Ladyzhenskaya-Ural'tseva {\cite[T. 13.7 p. 331]{GT}}]
Let $\Omega\subset \R^n$ be a bounded domain with $\partial\W$ of class $\cl^2$ and $\varphi\in\cl^2(\overline{\W})$. Assume that 
$u\in \cl^2( \overline{\Omega})$ satisfies
\begin{equation*}\left\{\begin{array}{l}\Q u =\ds\sum_{ij}a_{ij}(x,u,\nabla u)\Dij u + b(x,u,\nabla u)=0 \mbox{ in } \W,\\ 
\phantom{\Q u =\ds\sum_{ij}a_{ij}(x,u,\nabla u)\Dij u + b(x,u,\nabla u)} u = \varphi \mbox{ in } \partial\W,\end{array}\right.
\end{equation*}
where $\Q$ is an elliptic operator such that $a_{ij}\in \cl^1(\overline{\Omega} \times \R\times \R^n)$ and $b\in \cl^0(\overline{\Omega} \times \R\times \R^n)$.  
Then
\begin{equation*}
 [u_i]_{\alpha, \Omega} < C \ \ \forall i=1,\dots,n,
\end{equation*}
where $C=C\left(n,\norm{u}_{\cl^1(\overline{\W})},\norm{\varphi}_{\cl^2(\overline{\W})},\W,\lambda\right)$ and 
$\alpha=\alpha\left(n,\norm{u}_{\cl^1(\overline{\W})},\W,\lambda\right)>0$.
\end{teorema}
\end{minipage}}
\medskip

\noindent Indeed, the operator $\Q $ defined in \eqref{operador_Q} obviously satisfies the hypothesis of the previous theorem. Observe that this estimate does not depend on $u$ or $\tau$ since we already have an a priori estimate in $\cl^1(\overline{\W})$ for the family of solutions of \eqref{ProblemaPsigma}. {This is the constant $\beta$ that should be fixed at the beginning.}  

{
It still remains to prove that $T$ is continuous and compact. We recall first the following theorem for linear elliptic operators. 

\medskip
\noindent\colorbox{shadecolor}{%
\begin{minipage}{.985\textwidth}
\begin{teorema}[Global Schauder estimate {\cite[Th. 6.6 p. 98]{GT}}]
Let $\Omega\subset\mathbb{R}^n$ be a bounded domain with $\partial\W$ of class $\cl^{2,\alpha}$
and $\varphi\in \cl^{2,\alpha}(\overline{\Omega})$ for some $\alpha\in(0,1)$. Assume that $u\in \cl^{2,\alpha}(\overline{\Omega})$ satisfies 
\begin{align*}\label{probl}
\left\{\begin{array}{l}
\LL u=\ds\sum_{ij}a_{ij}(x)\Dij u+\ds\sum_{i}b_i(x)\Di u +c(x)u=f(x) \textrm{ in } \Omega,\\
\phantom{\LL u=\ds\sum_{ij}a_{ij}(x)\Dij u+\ds\sum_{i}b_i(x)\Di u +c(x)} u=\varphi\textrm{ in } \partial\Omega,
\end{array}\right.
\end{align*}
where $f$ and the coefficients of the strictly elliptic operator $\LL$ belong to $\cl^{\alpha}(\overline{\W})$.
Then
\begin{equation}\label{tezsches}
\norm{u}_{\cl^{2,\alpha}(\overline{\Omega})}\leq C\left(\|u\|_{\cl^0(\overline{\Omega})}+\|f\|_{\cl^{\alpha}(\overline{\Omega})}+\|\varphi\|_{\cl^{2,\alpha}(\overline{\Omega})}\right)
\end{equation}
for $C=C(\Omega, n, \alpha, \lambda, K )>0$ {where $K \geq \ds\max\left\{\norm{a_{ij}}_{\cl^{\alpha}(\overline{\Omega})},\norm{b_{i}}_{\cl^{\alpha}(\overline{\Omega})},\norm{c}_{\cl^{\alpha}(\overline{\Omega})}\right\}$}.
\end{teorema}
\end{minipage}}
\medskip

In view of the previous theorem 
$$\norm{T v}_{\cl^{2,\alpha\beta}(\overline{\Omega})}\leq C\left(\|T v\|_{\cl^0(\overline{\Omega})}+\norm{n H W_v^{3}}_{\cl^{\alpha\beta}(\overline{\Omega})}+\|\varphi\|_{\cl^{2,\alpha\beta}(\overline{\Omega})}\right)
$$
for every $v\in \cl^{1,\beta}(\overline{\W}) $. Besides, 
$$\|T v\|_{\cl^0(\overline{\Omega})}\leq \sup_{\partial\W}\mod{\varphi} + \tilde{C} \norm{nH W_v^{3}}_{\cl^{0}(\overline{\W})}$$
as a direct consequence of the following theorem for linear elliptic operators.

\medskip
\noindent\colorbox{shadecolor}{%
\begin{minipage}{0.985\textwidth}
\begin{teorema}[Height estimate {\cite[Th. 3.7 p. 36]{GT}}]
Let $\W\subset\R^n$ be a bounded domain. Assume that $u\in \cl^{2}(\W)\cap\cl^0(\overline{\Omega})$ satisfies 
$$\LL u=\ds\sum_{ij}a_{ij}(x)\Dij u+\ds\sum_{i}b_i(x)\Di u +c(x)u=f(x)$$ 
where $f$ and the coefficients $b_i$ of the strictly elliptic operator $\LL$ are bounded in $\overline{\W}$ and $c\leq 0$. 
Then
\begin{equation}\label{eq_est_apriori_linear}
\ds\sup_{\W}\mod{u}\leq \sup_{\partial\W}\mod{u}+C\sup_{\W}\mod{f}.
\end{equation}
where $C=C\left(\lambda,\diam(\W),\ds\max_{i}\sup_{\W}\mod{b_i}\right)$.
\end{teorema}
\end{minipage}}
\medskip

Observe that the constants $C$ and $\tilde{C}$ are independent of $v$. Hence, $T$ maps bounded sets in $\cl^{1,\beta}(\overline{\W})$ into bounded sets in $\cl^{2,\alpha\beta}(\overline{\W})\hookrightarrow \cl^{1,\beta}(\overline{\W})\hookrightarrow \cl^{\alpha}(\overline{\W})$. 

Let now $\left\{ T v_m\right\}$ be a sequence in $T\left(\cl^{1,\beta}(\overline{\W})\right)$. We affirm that there exists some subsequence that converges in $\cl^{1,\beta}(\overline{\W})$. 
Indeed, since for every $x$ and $y$ in $\overline{\W}$ we have
$$ \mod{T v_m(x)-T v_m(y)} \leq \norm{T v_m}_{\cl^{\alpha}(\overline{\W})} \norm{x-y}^{\alpha}, $$
so $\left\{T v_m\right\}$ is equicontinuous. This sequence is also uniformly bounded, then there exists a subsequence $\left\{Tv_{\xi(m)}\right\}$ that converges uniformly to a continuous function $w$ by the Arzela-Ascoli theorem. 

Besides, for every $1\leq i\leq n$,
$$ \mod{\Di T v_m(x)-\Di T v_m(y)}\leq \norm{Tv_m}_{\cl^{1,\beta}(\overline{\W})} \norm{x-y}^{\beta},$$
then $\left\{\Di Tv_m\right\}$ is also equicontinuous. By the Arzela-Ascoli theorem there exists a subsequence $\left\{\Di Tv_{\phi_i(m)}\right\}$ that converges uniformly to a continuous function $w_i$ since $\left\{\Di Tv_m\right\}$ is uniformly bounded. This implies that $\Di w$ exists, is continuous and $w_i=\Di w$.

Furthermore, for every $1\leq i, j\leq n$, 
$$ \mod{\Dij T v_m(x)-\Dij T v_m(y)}\leq \norm{Tv_m}_{\cl^{2,\alpha\beta}(\overline{\W})} \norm{x-y}^{\alpha\beta}.$$ Because of the $\cl^{2,\alpha\beta}$ estimate $\left\{\Dij T v_m\right\}$ is equicontinuous. Since it is also a bounded sequence, then there exists a subsequence $\left\{\Dij Tv_{\psi_{ij}(m)}\right\}$ that converges uniformly to $\Dij w$ (which necessarily exists and is continuous). 

This proves that $w\in\cl^{2}(\overline{\W})$ and that the subsequence $Tv_{\psi(\phi(\xi(m)))}$, with $\phi=\phi_1\circ\dots\circ\phi_n$ and $\psi=\psi_{nn}\circ\psi_{n,n-1}\circ\dots\circ\psi_{11}$, converges to $w$ in $\cl^{2}(\overline{\W})$ and also in $\cl^{1,\beta}(\overline{\W})$. 

In order to prove the continuity of $T$ let $\{v_m\}$ be a sequence converging to $v$ in $\cl^{1,\beta}(\overline{\W})$. Since $\{Tv_m\}$ is precompact in $\cl^2(\overline{\W})$, every subsequence has a convergent subsequence. Let $\left\{Tv_{\xi(m)}\right\}$ such a convergent subsequence with limit $w\in\cl^2(\overline{\W})$. Then
$$\left\{
\begin{split}
\sum_{i,j=1}^n \left(\left(W_{v_{\xi(m)}}\right)^2\delta_{ij} - {\Di v_{\xi(m)} \, \Dj v_{\xi(m)}}\right)\Dij \left(Tv_{\xi(m)}\right) &=nH(x) \left(W_{v_{\xi(m)}}\right) ^{3}\ \mbox{in}\ \W,\\
Tv_{\xi(m)}&=\varphi \ \mbox{in}\ \partial\W.
\end{split}\right.
$$
Passing to the limits and recalling the definition of $T$ one has $w=Tv$. 
Since every subsequence of $\{Tv_m\}$ has at least one subsequence which converges to $Tv$, then $\{Tv_m\}$ also converges to $Tv$.
}

So, it had been proved that $T$ has a fixed point $u\in\cl^{2,\alpha\beta}(\overline{\W})\hookrightarrow\cl^{1,\beta}(\overline{\W})$. To see that $u\in\cl^{2,\alpha}(\overline{\W})$ recall that $u$ is a solution of the linear problem $LP_u$ (see problem \eqref{ProblemaPv})
and use the following global regularity theorem for linear operators.

\medskip
\noindent\colorbox{shadecolor}{
\begin{minipage}{.97\textwidth}
\begin{teorema}[Global regularity {\cite[Th. 6.18 p. 111]{GT}}]
Let $\Omega\subset\mathbb{R}^n$ be a bounded domain with $\partial\W$ of class $\cl^{2,\alpha}$ and $\varphi\in \cl^{2,\alpha}(\overline{\Omega})$ for some $\alpha\in(0,1)$. Suppose that $u\in\cl^{2}(\W)$ satisfies 
\begin{align*}
\left\{\begin{array}{l}
\LL u=\ds\sum_{ij}a_{ij}(x)\Dij u+\ds\sum_{i}b_i\Di u +c(x)u=f \textrm{ in } \Omega,\\
\phantom{\LL u=\ds\sum_{ij}a_{ij}(x)\Dij u+\ds\sum_{i}b_i\Di u +c(x)} u=\varphi\textrm{ in } \partial\Omega,
\end{array}\right.
\end{align*}
where $f$ and the coefficients of the strictly elliptic operator $\LL$ belong to $\cl^{\alpha}(\overline{\W})$. Then $u\in\cl^{2,\alpha}(\overline{\W})$. 
\end{teorema}
\end{minipage}}
\medskip

Finally, notice that the solution is unique as a consequence of the comparison principle, Theorem \ref{PM_quasilineares}. 
\end{proof}
\begin{obs}
Applying the same argument to every fixed $\tau\in[0,1]$ we ensure the existence of a solution for each problem \eqref{ProblemaPsigma}. 
\end{obs}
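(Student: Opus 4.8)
The plan is to note that each \eqref{ProblemaPsigma} is itself an instance of the Dirichlet problem \eqref{ProblemaP} with $H$ and $\varphi$ rescaled, and then to quote Theorem \ref{T_Exist_quaselineares}. Fix $\tau_0\in[0,1]$. Writing $\M u=n(\tau_0 H(x))W^3$, problem $(P_{\tau_0})$ is exactly the prescribed mean curvature equation with curvature function $\tau_0 H\in\cl^{\alpha}(\overline{\W})$ and boundary datum $\tau_0\varphi\in\cl^{2,\alpha}(\overline{\W})$; thus the regularity hypotheses of Theorem \ref{T_Exist_quaselineares} are satisfied. Equivalently, one re-runs the Leray--Schauder argument of the previous proof with $T$ replaced by the operator $T_{\tau_0}$ sending $v$ to the solution of the linear problem with right-hand side $\tau_0 nH W_v^3$ and boundary data $\tau_0\varphi$, whose fixed points are precisely the solutions of $(P_{\tau_0})$.

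The only point requiring attention is that the a priori bound needed to invoke Theorem \ref{T_Exist_quaselineares} for $(P_{\tau_0})$ is already at our disposal. The related problems associated to $(P_{\tau_0})$ are, by definition, $\M u=\sigma n(\tau_0 H)W^3$ in $\W$ with $u=\sigma\tau_0\varphi$ on $\partial\W$ for $\sigma\in[0,1]$, which is nothing but $(P_{\sigma\tau_0})$ in the original notation; as $\sigma$ runs over $[0,1]$ the parameter $\sigma\tau_0$ runs over $[0,\tau_0]\subseteq[0,1]$. Hence every solution of a related problem of $(P_{\tau_0})$ is a solution of some \eqref{ProblemaPsigma} with parameter in $[0,1]$ and therefore satisfies $\norm{u}_{\cl^1(\overline{\W})}<M$ with the very same constant $M$ from the hypothesis, which is independent of the solution and of $\sigma$.

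With this bound in hand Theorem \ref{T_Exist_quaselineares} applies directly and produces a unique solution $u_{\tau_0}\in\cl^{2,\alpha}(\overline{\W})$ of $(P_{\tau_0})$ (for $\tau_0=0$ one may simply take $u\equiv 0$). Since $\tau_0\in[0,1]$ is arbitrary, every \eqref{ProblemaPsigma} is solvable. I do not anticipate any real difficulty here: the argument is pure bookkeeping, the substance being the observation that the family of related problems of $(P_{\tau_0})$ is a subfamily of $\{(P_\tau):\tau\in[0,1]\}$ and so inherits the uniform $\cl^1$ estimate assumed in Theorem \ref{T_Exist_quaselineares}. One must only be careful not to re-derive that estimate, since it is a hypothesis and not something proved here, and to check the (trivial) regularity of the rescaled data $\tau H$ and $\tau\varphi$.
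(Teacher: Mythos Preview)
Your proposal is correct and is precisely what the paper has in mind: the remark carries no separate proof in the paper beyond the phrase ``applying the same argument,'' and your write-up spells this out by observing that the related problems of $(P_{\tau_0})$ are the subfamily $(P_{\sigma\tau_0})_{\sigma\in[0,1]}$ of $\{(P_\tau)\}_{\tau\in[0,1]}$, so the assumed uniform $\cl^1$ bound $M$ applies and Theorem~\ref{T_Exist_quaselineares} (equivalently, the Leray--Schauder scheme with $T_{\tau_0}$) yields a solution.
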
%


\section{Fulfillment of the existence program's requirements}\label{chapter_estimates}

The goal in this section is to obtain the a priori estimates for the family of solutions of the related problems \eqref{ProblemaPsigma} required by the existence program. 

In order to derive the a priori global gradient estimate the techniques introduced by Caffarelli-Nirenberg-Spruck \cite[p. 51]{CNSpruck} are used. 


\begin{teo}[A priori global gradient estimate {\cite[Th. 3.2.4 p. 94]{han2016nonlinear}}]\label{teo_Est_global_gradiente}
Let $\W\in M$ be a bounded domain with $\partial\W$ of class $\cl^2$. For $H\in\cl^{1}\left(\W\right)$, let $u\in\cl^3(\W)\cap\cl^1(\overline{\W})$ be a solution of \eqref{operador_minimo_1_coord}.
Then
\begin{equation}\label{est_global_final}
\sup_{\W}\norm{\nabla u(x)}\leq\left(\sqrt{3}+\sup\limits_{\partial\W}\norm{\nabla u}\right)\exp\left(2\sup\limits_{\W}\modulo{u}\left(1+8n\left(\norm{H}_1\right)\right)\right).
\end{equation}
\end{teo}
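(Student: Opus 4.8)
The plan is to reduce the gradient bound to an estimate for $W:=\sqrt{1+\norm{\nabla u}^2}$ obtained by the maximum principle applied to an auxiliary function. Since the right-hand side of \eqref{est_global_final} is at least $\sqrt3$ and $\norm{\nabla u}\le W$, it suffices to bound $\sup_\W W$. Let $\LL:=\sum_{i,j}a_{ij}(\nabla u)\Dij$ denote the linearization of $\M$ along $u$, so that $\LL u=\M u=nHW^3$; by Section~\ref{secao_eq_cur_media} this operator is strictly elliptic with eigenvalues in $[1,W^2]$. Fix $\lambda:=1+8n\norm{H}_1$ and set $\Phi:=\log W+\lambda u$ on $\overline\W$; let $x_0\in\overline\W$ be a point where $\Phi$ attains its maximum. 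If $x_0\in\partial\W$, then for every $x\in\W$ one has $\log W(x)=\Phi(x)-\lambda u(x)\le\Phi(x_0)-\lambda u(x)=\log W(x_0)+\lambda(u(x_0)-u(x))\le\log W(x_0)+2\lambda\sup_\W\modulo u$, and using the elementary inequality $\sqrt{1+t^2}\le\sqrt3+t$ (valid for $t\ge0$) we get $W(x_0)\le\sqrt3+\norm{\nabla u(x_0)}\le\sqrt3+\sup_{\partial\W}\norm{\nabla u}$; exponentiating gives exactly \eqref{est_global_final}. The same computation covers the case $x_0\in\W$ with $\nabla u(x_0)=0$, since there $W(x_0)=1\le\sqrt3$. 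Thus it remains to rule out an interior maximum at a point $x_0$ with $\nabla u(x_0)\neq0$.

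At such an $x_0$ we have $\nabla\Phi(x_0)=0$, i.e. $\nabla W=-\lambda W\nabla u$ there, and $\LL\Phi(x_0)\le0$ because $\Hess\Phi(x_0)\le0$ and $(a_{ij})$ is positive definite. The core of the argument is an identity for $\LL(\log W)$. Starting from $\LL(\log W)=\tfrac1W\LL W-\tfrac1{W^2}\sum_{i,j}a_{ij}W_iW_j$ and using $W\nabla W=\Hess u\cdot\nabla u$, I would differentiate the equation $\sum a_{ij}(\nabla u)\Dij u=nHW^3$ once, contract with $\nabla u$, and substitute the expression for $\partial a_{ij}/\partial p_k$ coming from \eqref{coeff_aij} together with the non-divergence form $\Delta u=nHW+\tfrac1W\escalar{\nabla u}{\nabla W}$ of the equation (which is \eqref{equ_curv_media_MxR_Delta_2}). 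After the many third-order terms collapse, one is left with
\[
\LL(\log W)=\norm{\Hess u}^2-\norm{\nabla W}^2+nH\,\escalar{\nabla u}{\nabla W}+n\,\escalar{\nabla H}{\nabla u}\,W .
\]
Adding $\lambda\LL u=\lambda nHW^3$ and evaluating at $x_0$, where $\escalar{\nabla u}{\nabla W}=-\lambda W\norm{\nabla u}^2$ (so the two $H$-terms combine into $n\lambda HW$), gives
\[
0\ \ge\ \LL\Phi(x_0)\ =\ (\norm{\Hess u}^2-\norm{\nabla W}^2)(x_0)+n\lambda H(x_0)W(x_0)+n\,\escalar{\nabla H}{\nabla u}(x_0)\,W(x_0).
\]

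To close the estimate, apply Cauchy--Schwarz to $W\nabla W=\Hess u\cdot\nabla u$: this yields $\norm{\Hess u}^2\ge\tfrac{W^2}{\norm{\nabla u}^2}\norm{\nabla W}^2$, hence $\norm{\Hess u}^2-\norm{\nabla W}^2\ge\tfrac{\norm{\nabla W}^2}{\norm{\nabla u}^2}=\lambda^2W^2$ at $x_0$ (this is where $\nabla u(x_0)\neq0$ is used). Bounding $\modulo{H}\le\norm{H}_1$ and $\modulo{\escalar{\nabla H}{\nabla u}}\le\norm{H}_1\norm{\nabla u}\le\norm{H}_1W$ and dividing the last display by $W(x_0)>0$ gives $0\ge W(x_0)(\lambda^2-n\norm{H}_1)-n\lambda\norm{H}_1$. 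The choice $\lambda=1+8n\norm{H}_1$ makes $\lambda^2\ge1+16n\norm{H}_1\ge2n\norm{H}_1$, so $\lambda^2-n\norm{H}_1\ge\lambda^2/2$, while $8n\norm{H}_1\le\lambda$ gives $n\lambda\norm{H}_1\le\lambda^2/8$; therefore $W(x_0)\le\tfrac14<1$, contradicting $W\ge1$. Hence no such interior maximum exists, and by the first paragraph $\sup_\W W\le(\sqrt3+\sup_{\partial\W}\norm{\nabla u})\exp(2\sup_\W\modulo u\,(1+8n\norm{H}_1))$, which is \eqref{est_global_final}.

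The only non-routine step is the second-order computation producing the identity for $\LL(\log W)$: it requires differentiating the fully nonlinear equation (whence the hypothesis $u\in\cl^3(\W)$) and a careful bookkeeping in which the third-derivative terms generated by $\partial a_{ij}/\partial p_k$ must cancel exactly against those coming from $\Delta u$ and from $\escalar{\nabla u}{\nabla W}$. Everything else — the lower bound $\norm{\Hess u}^2-\norm{\nabla W}^2\ge\lambda^2W^2$ at the critical point and the calibration of $\lambda$ so that the constants land precisely on $\sqrt3$ and $\exp(2\sup_\W\modulo u\,(1+8n\norm{H}_1))$ — is elementary.
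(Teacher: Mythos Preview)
Your proof is correct and follows the same Bernstein-type maximum-principle strategy as the paper: both apply the maximum principle to an auxiliary function of the form $\log(\text{gradient quantity})+Au$ with $A=1+8n\norm{H}_1$, differentiate the equation once, and calibrate constants to land on the stated bound. The execution, however, differs in two respects. First, the paper uses $\tilde w=\log\norm{\nabla u}+Au$ and works in a frame with $e_1=\nabla u(x_0)/\norm{\nabla u(x_0)}$, deriving pointwise third-derivative inequalities (for $\partial_{111}u$ and $\partial_{kk1}u$, $k>1$) which combine with the differentiated equation to show $\norm{\nabla u(x_0)}<\sqrt3$ at an interior maximum; you instead use $\Phi=\log W+\lambda u$ and package everything into the coordinate-free identity $\LL(\log W)=\norm{\Hess u}^2-\norm{\nabla W}^2+nH\escalar{\nabla u}{\nabla W}+nW\escalar{\nabla H}{\nabla u}$ (which I checked is correct), then use Cauchy--Schwarz on $W\nabla W=\Hess u\cdot\nabla u$ at the critical point to obtain a contradiction $W(x_0)\le\tfrac14<1$, i.e.\ no interior maximum exists at all. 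Second, the paper therefore needs to combine the interior bound $\norm{\nabla u(x_0)}<\sqrt3$ with the boundary case, whereas you reduce entirely to the boundary (or $\nabla u(x_0)=0$) case via $\sqrt{1+t^2}\le\sqrt3+t$. Your route is a bit slicker and avoids the coordinate bookkeeping; the paper's route is more explicit and makes the role of the $\sqrt3$ transparent as a genuine interior gradient bound rather than a formal inequality.
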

\begin{proof}
Let $w(x)=\norm{\nabla u(x)}e^{Au(x)}$ where $A\geq 1$. Suppose $w$ attains a maximum at $x_0\in\overline{\W}$. If $x_0\in\partial\W$, then
$$w(x)\leq w(x_0) =\norm{\nabla u(x_0)}e^{Au(x_0)}.$$
So,
\begin{equation}\label{est_global_1}
\sup_{\W}\norm{\nabla u(x)}\leq\sup_{\partial\W}\norm{\nabla u}e^{2A\sup\limits_{\W}\modulo{u}}. 
\end{equation}
Suppose now that $x_0\in\W$ and that $\nabla u(x_0)\neq 0$. {It can be assumed that} 
$e_1=\frac{\nabla u(x_0)}{\norm{\nabla u(x_0)}}$. Then, 
\begin{equation}\label{der_u_x0}
\Dk u(x_0)=\escalar{\Ek}{\nabla u(x_0)}
=\norm{\nabla u(x_0)}\delta_{k1}.
\end{equation}

Differentiating \eqref{operador_minimo_1_coord} with respect to $x_1$ we derive
\begin{equation}\label{der_MCE}
\begin{split}
\Dum \left(W^2\right)\Delta u+W^2 \ds\sum_{i=1}^n\Dumii u -2\ds\sum_{i,j=1}^n\Di u \Dumj u  \Hessij u -\sum_{i,j=1}^n \Di u \Dj u \Dumij u
\\
=n\Dum H W^3 + nH\Dum\left(W^3\right).
\end{split}
\end{equation}

The goal now is to estimate the third derivatives in \eqref{der_MCE}. Observe first that the function $\tilde{w}(x)=\ln w(x)=Au(x)+\ln \norm{\nabla u(x)}$ also attains a maximum at $x_0$. Therefore, for each $0\leq k \leq n$, 
\begin{equation}\label{derk}
\Dk \tilde{w}(x_0)= A\Dk u(x_0) + \dfrac{\Dk \left(\norm{\nabla u}^2\right)(x_0)}{2\norm{\nabla u(x_0)}^2}=0, 
\end{equation}
and
$$\Dkk \tilde{w}(x_0)=A\Dkk u(x_0) +\dfrac{1}{2}  \Dk\left({\norm{\nabla u}^{-2}}\right)(x_0)\Dk \left(\norm{\nabla u}^2\right)(x_0)+\dfrac{\Dkk \left(\norm{\nabla u}^2\right)(x_0)}{2\norm{\nabla u(x_0)}^2}\leq 0.$$
Since
$$\Dk\left({\norm{\nabla u}^{-2}}\right)=\Dk\left({\norm{\nabla u}^{2}}\right)^{-1}=-\left({\norm{\nabla u}^{2}}\right)^{-2} \Dk\left({\norm{\nabla u}^{2}}\right),$$
then
\begin{equation}\label{derkk}
A\Dkk u(x_0) - \dfrac{\left(\Dk \left(\norm{\nabla u}^2\right)(x_0)\right)^2}{2\norm{\nabla u(x_0)}^{4}}+\dfrac{\Dkk \left(\norm{\nabla u}^2\right)(x_0)}{2\norm{\nabla u(x_0)}^2}\leq 0.
\end{equation}

Once
\begin{equation}\label{Dknorma}
\Dk \left(\norm{\nabla u}^2\right)=\Dk \left(\sum_{i=1}^n(\Di u(x))^2\right)= 2 \ds\sum_{i=1}^n  \Di u  \Dki u,
\end{equation}
it follows from \eqref{der_u_x0} 
\begin{equation}\label{Dknorm}
\Dk \left(\norm{\nabla u}^2\right)(x_0)=2 \norm{\nabla u(x_0)}\Dumk u(x_0).
\end{equation}\vspace*{.1cm}

\noindent Substituting \eqref{der_u_x0} and \eqref{Dknorm} in \eqref{derk} one has
{
$$ A\norm{\nabla u (x_0)}\delta_{k1} + \dfrac{2 \norm{\nabla u(x_0)}\Dumk u(x_0)}{2\norm{\nabla u(x_0)}^2} =0,$$
thus,}%
\begin{equation}\label{derk_2}
\Dumk u(x_0)=-A\norm{\nabla u (x_0)}^2\delta_{k1}.
\end{equation}
\noindent Substituting also \eqref{derk_2} in \eqref{Dknorm} we obtain 
\begin{equation}\label{Dknorm_2}
\Dk \left(\norm{\nabla u}^2\right)(x_0)=-2A\norm{\nabla u (x_0)}^3\delta_{k1}.
\end{equation}

Besides, from \eqref{Dknorma} we get
\begin{align*}
 \Dkk \left(\norm{\nabla u}^2\right)(x) =&2\ds\sum_{i=1}^n \left( \Dkki u \Di u + (\Dki u)^2\right),
\end{align*}
and from \eqref{der_u_x0} we conclude
\begin{equation}\label{dkknorma_2}
\begin{split}
\Dkk \left(\norm{\nabla u}^2\right)(x_0)=2\norm{\nabla u(x_0)}\Dkkum u +2 \ds\sum_{i=1}^n (\Dki u(x_0))^2.
\end{split}
\end{equation}

Using expressions \eqref{Dknorm_2} and \eqref{dkknorma_2} in \eqref{derkk} we verify that 
\[
\begin{split}
A\Dkk u(x_0)-2A^2\norm{\nabla u (x_0)}^2\delta_{k1}+\ds \dfrac{\Dkkum u(x_0)}{\norm{\nabla u(x_0)}}
+\dfrac{\ds\sum_{i=1}^n\left(\Dki u(x_0)\right)^2}{\norm{\nabla u (x_0)}^2} &\leq 0.
\end{split}
\]
From \eqref{derk_2} we have for $k=1$ 
\[\begin{split}
-A^2\norm{\nabla u (x_0)}^2-2A^2\norm{\nabla u (x_0)}^2+\ds \dfrac{\Dumumum u(x_0)}{\norm{\nabla u(x_0)}}
+\dfrac{\ds\sum_{i=1}^n\left(-A\norm{\nabla u(x_0)}^2\right)^2\delta_{i1}}{\norm{\nabla u (x_0)}^2}& \leq 0,
\end{split}\]
then,
\begin{equation}\label{est_Dumumum}
\Dumumum u(x_0)\leq 2A^2\norm{\nabla u (x_0)}^3.
\end{equation}

\bigskip

\noindent If $k>1$, then
\begin{align*}
A\Dkk u(x_0)+\ds \dfrac{\Dkkum u(x_0)}{\norm{\nabla u(x_0)}}\leq-\dfrac{\ds\sum_{i=1}^n\left(\Dki u(x_0)\right)^2}{\norm{\nabla u (x_0)}^2} \leq 0,
\end{align*}
so,
\begin{equation}\label{est_Dkkum}
\Dkkum u(x_0)\leq -A\Dkk u(x_0)\norm{\nabla u (x_0)}, \ 1 < k\leq n .
\end{equation}

\bigskip

On the other hand, since \eqref{Dknorm_2} holds we deduce 
\begin{equation}\label{dW2}
\Dum\left(W^2\right)(x_0)=\Dum\left(\norm{\nabla u}^2\right)(x_0)=-2A\norm{\nabla u (x_0)}^3,
\end{equation}
and
\begin{equation}\label{dW3}
\Dum\left(W^3\right)(x_0)=\frac{3}{2}W_0\Dum \left(W^2\right)(x_0)=-3AW_0\norm{\nabla u (x_0)}^3.
\end{equation}
Using \eqref{der_u_x0}, \eqref{derk_2}, \eqref{est_Dumumum}, \eqref{est_Dkkum}, \eqref{dW2} and \eqref{dW3} in \eqref{der_MCE} it follows
\begin{align*}
&n\Dum H(x_0) W_0^3 -3nA H_0 W_0\norm{\nabla u(x_0)}^3\\[1em]
=&-2A\norm{\nabla u(x_0)}^3\Delta u(x_0)+W_0^2\ds\sum_{i=1}^n \Dumii u(x_0) \\
&+2A\norm{\nabla u(x_0)}^3\Dumum u(x_0)-\norm{\nabla u(x_0)}^2 \Dumumum u(x_0) \\[1em]
=&-2A\norm{\nabla u(x_0)}^3\ds\sum_{i>1}\Dii u(x_0)+W_0^2\ds\sum_{i>1} \Dumii u(x_0) +\Dumumum u(x_0) \\[1em]
\leq &-2A\norm{\nabla u(x_0)}^3\ds\sum_{i>1}\Dii u(x_0)-A\norm{\nabla u(x_0)}W_0^2\ds\sum_{i>1}\Dii u(x_0) +2A^2\norm{\nabla u(x_0)}^3 \\[1em]
=&-A\norm{\nabla u(x_0)}\left(1+3\norm{\nabla u(x_0)}^2\right)\ds\sum_{i>1}\Dii u(x_0) +2A^2\norm{\nabla u(x_0)}^3,
\end{align*}
where we have used the notation $H_0=H(x_0)$ and $W_0=\sqrt{1+\norm{\nabla u(x_0)}^2}$. 

In addition, since \eqref{der_u_x0} and \eqref{derk_2} holds, the mean curvature equation \eqref{operador_minimo_1_coord} at $x_0$ takes the form
\begin{align*}
 nH_0 W_0^3=W_0^2 \Delta u(x_0)-\norm{\nabla u(x_0)}^2\Dumum u(x_0)
=W_0^2\ds\sum_{i>1}\Dii u(x_0) -A \norm{\nabla u(x_0)}^2,
\end{align*}
so,
\begin{equation}\label{equ_curv_media_MxR_Delta_x0}
\ds\sum_{i>1}\Dii u(x_0) = nH_0W_0+\dfrac{A\norm{\nabla u(x_0)}^2}{W_0^2}.
\end{equation}
Therefore,
\begin{align*}
0\leq&-A\norm{\nabla u(x_0)}\left(1+3\norm{\nabla u(x_0)}^2\right)\left(nH_0W_0+\dfrac{A\norm{\nabla u(x_0)}^2}{W_0^2} \right)\\
&+2A^2\norm{\nabla u(x_0)}^3 + 3nA H_0 W_0\norm{\nabla u(x_0)}^3 - n\Dum HW_0^3 \\[1em]
=&-A n H_0 W_0\norm{\nabla u(x_0)}-n\Dum HW_0^3+\dfrac{A^2\norm{\nabla u(x_0)}^3}{W_0^2}\left(1-\norm{\nabla u(x_0)}^2\right).
\end{align*}
Then
\begin{align*}
\dfrac{A^2\norm{\nabla u(x_0)}^3}{W_0^2}\left(\norm{\nabla u(x_0)}^2-1\right)
\leq A n h_0 W_0\norm{\nabla u(x_0)} + n h_1 W_0^3,
\end{align*}
where $h_0=\sup\limits_{\W}\modulo{H}$ and $h_1=\sup\limits_{\W} \norm{\nabla H}$. Dividing by $A^2 W_0^3$ and noticing that $W_0^2> W_0 >\norm{\nabla u(x_0)}$ it follows 
$$ \dfrac{\norm{\nabla u(x_0)}^3}{W_0^5}\left(\norm{\nabla u(x_0)}^2-1\right)< \dfrac{n}{A} \norm{H}_1.  $$

Obviously we can suppose that $\norm{\nabla u(x_0)}>1$. Since 
$$W_0^3=\left(1+\norm{\nabla u(x_0)}^2\right)^{3/2}<\left(2\norm{\nabla u(x_0)}^2\right)^{3/2}
<4\norm{\nabla u(x_0)}^3, $$
we see that
$$\dfrac{1}{4}\dfrac{\norm{\nabla u(x_0)}^2-1}{W_0^2}<\dfrac{\norm{\nabla u(x_0)}^3}{W_0^3}\dfrac{\norm{\nabla u(x_0)}^2-1}{W_0^2}< \dfrac{n}{A} \norm{H}_1,  $$
that is,
$$\dfrac{\norm{\nabla u(x_0)}^2-1}{\norm{\nabla u(x_0)}^2+1}< \dfrac{4n}{A} \norm{H}_1.  $$
Choosing $A=1+8n\norm{H}_1$ it follows
$$\dfrac{\norm{\nabla u(x_0)}^2-1}{\norm{\nabla u(x_0)}^2+1}< \dfrac{1}{2},  $$
so, 
$$ \norm{\nabla u(x_0)}<\sqrt{3}.$$
As a consequence,
$$w(x)\leq w(x_0) =\norm{\nabla u(x_0)}e^{Au(x_0)}\leq\sqrt{3}e^{Au(x_0)},$$
thus 
\begin{equation}\label{est_global_grad_interior}
\sup_{\W}\norm{\nabla u(x)}\leq\sqrt{3}e^{2A\sup\limits_{\W}\modulo{u}}. 
\end{equation}

Putting together \eqref{est_global_1} and \eqref{est_global_grad_interior} we obtain
\begin{equation*}
\sup_{\W}\norm{\nabla u(x)}\leq\sqrt{3}e^{2A\scriptstyle\sup\limits_{\W}\modulo{u}} + \sup\limits_{\partial\W}\norm{\nabla u}e^{2A\scriptstyle\sup\limits_{\W}\modulo{u}},
\end{equation*}
which yields the desired estimate.
\end{proof}

The following theorem due to Serrin \cite[\S 9 p. 434]{Serrin} ensures the boundary gradient estimate. For the proof is used the classical idea of finding an upper and a lower barriers for $u$ on $\partial\W$ to get a control for $\nabla u$ along $\partial\W$. 

\begin{teo}[A priori boundary gradient estimate {\cite[Th. 3.2.2 p. 89]{han2016nonlinear}}]\label{teo_Est_gradiente_fronteira}
Let $\W\in \R^n$ be a bounded domain with $\partial\W$ of class $\cl^2$ and $\varphi\in\cl^2(\overline{\W})$. 
Let 
$H\in\cl^{1}\left(\overline{\W}\right)$ satisfying 
\begin{equation}\label{cond_Serrin}
(n-1)\Hc_{\partial\W}(y)\geq n \modulo{H(y)} \ \forall \ y\in\partial\W.
\end{equation}
If $u\in\cl^2(\W)\cap\cl^1(\overline{\W})$ is a solution of \eqref{ProblemaP}, then
\begin{equation}\label{EstGradFront}
\sup\limits_{\partial\W}\norm{\nabla u}\leq \norm{\varphi}_1 + \ds e^{C\left(1+ \norm{H}_1+ \norm{\varphi}_2\right)\left(1+\norm{\varphi}_1\right)^3\left(\norm{u}_0+\norm{\varphi}_0\right)}
\end{equation}
for some $C=C(n,\W)$.
\end{teo}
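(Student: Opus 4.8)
The plan is to construct explicit upper and lower barriers for $u$ near $\partial\W$ and invoke the comparison principle (Theorem \ref{PM_quasilineares}). Since $\W$ has $\cl^2$ boundary, in a sufficiently small tubular neighborhood $\W_a=\{x\in\W : d(x,\partial\W)<a\}$ the distance function $\varrho(x)=\dist(x,\partial\W)$ is $\cl^2$ and a distance function in the sense used in Section \ref{apend_form_transform}, with $-\Delta\varrho(y)=(n-1)\Hc_{\partial\W}(y)$ on $\partial\W$ (more precisely $\Delta\varrho$ is controlled by the principal curvatures of the level sets, with a bound depending only on $n$ and $\W$). The ansatz for the upper barrier is $w^+=\varphi+\psi\circ\varrho$ with $\psi(0)=0$, $\psi$ increasing and strictly concave, chosen so that $\psi'(0)$ is as large as needed while $\psi$ stays bounded by $\osc u$-type quantities on $\W_a$; the lower barrier is $w^-=\varphi-\psi\circ\varrho$ by symmetry (using $\Q(-u)$ has the same structure with $H$ replaced by $-H$, and the hypothesis \eqref{cond_Serrin} is symmetric in the sign of $H$).

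The key computation is formula \eqref{Eq_trans_varphi}, which gives $\M w^+$ explicitly in terms of $\psi'$, $\psi''$, $\Delta\varrho$, and the first and second derivatives of $\varphi$. First I would read off from it that the dominant term for large $\psi'$ is $\psi'\,W^2\,\Delta\varrho + \psi''\,W^2 - \psi''\langle\nabla\varrho,\psi'\nabla\varrho+\nabla\varphi\rangle^2$; the last two combine, using $W^2=1+\|\psi'\nabla\varrho+\nabla\varphi\|^2$ and $\|\nabla\varrho\|=1$, to something of the form $\psi''(1+O(\|\nabla\varphi\|^2))$ which is negative since $\psi''<0$. The terms involving $\Hess\varphi$ and one factor of $\psi'$ are $O(\psi'\,W^2\,\|\varphi\|_2)$ but come with no favorable sign, so they must be dominated. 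Then I would require $\M w^+ \le nH(x)W^3$ in $\W_a$, i.e. $\Q w^+\le 0$; dividing through by $W^2$ and using $W\le 1+\|\psi'\nabla\varrho+\nabla\varphi\|\le C(1+\|\varphi\|_1)(1+\psi')$, this reduces to a scalar ODE inequality for $\psi$ of the shape
\begin{equation*}
-\psi'' \ge C_1(1+\|H\|_1+\|\varphi\|_2)(1+\|\varphi\|_1)^3\,(1+\psi')^2 ,
\end{equation*}
with $C_1=C_1(n,\W)$, where the factor $(1+\psi')^2$ comes from the $W^3$ on the right-hand side after dividing by $W^2$. This ODE is solved explicitly by $\psi(t)=\tfrac1\nu\log(1+\mu(1-e^{-\nu t}))$ (the classical Serrin/Jenkins–Serrin barrier), where $\nu$ absorbs the constant $C_1(1+\|H\|_1+\|\varphi\|_2)(1+\|\varphi\|_1)^3$ and $\mu$ is a free large parameter; one checks $\psi'(0)=\mu$, $\psi$ bounded by $\tfrac1\nu\log(1+\mu)$ on $[0,a]$, and $-\psi''\ge \nu(1+\psi')^2$ up to harmless constants, so the inequality holds provided $a$ is small enough and $\mu$ large enough (with $a=a(\W)$ fixed first).

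Having arranged $\Q w^+\le 0=\Q u$ in $\W_a$, I check the boundary conditions for the comparison principle on $\partial\W_a=\partial\W\cup\{\varrho=a\}$: on $\partial\W$, $w^+=\varphi=u$; on the inner face $\{\varrho=a\}$, $w^+=\varphi+\psi(a)\ge \varphi+(\|u\|_0+\|\varphi\|_0)\ge u$ once $\mu$ is chosen so that $\tfrac1\nu\log(1+\mu)\ge \|u\|_0+\|\varphi\|_0$, i.e. $\mu\ge e^{\nu(\|u\|_0+\|\varphi\|_0)}-1$. Theorem \ref{PM_quasilineares} then gives $u\le w^+$ in $\W_a$, and symmetrically $u\ge w^-$. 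Since $u=\varphi=w^\pm$ on $\partial\W$, for $y\in\partial\W$ and the inner normal we get $\partial u/\partial N(y)\le \partial w^+/\partial N(y)=\partial\varphi/\partial N(y)+\psi'(0)$ and the opposite bound from $w^-$, hence $|\nabla u(y)|\le |\nabla\varphi(y)|+ |\partial u/\partial N(y)| + \text{(tangential part, which equals that of }\varphi\text{)} \le \|\varphi\|_1+\psi'(0)+\|\varphi\|_1$, and $\psi'(0)=\mu= e^{\nu(\|u\|_0+\|\varphi\|_0)}-1$ with $\nu= C(1+\|H\|_1+\|\varphi\|_2)(1+\|\varphi\|_1)^3$ yields exactly \eqref{EstGradFront} after absorbing constants. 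The main obstacle is bookkeeping: extracting from \eqref{Eq_trans_varphi} the correct powers of $(1+\|\varphi\|_1)$ and the precise way $\|H\|_1$, $\|\varphi\|_2$ enter $\nu$, and verifying that all the unsigned $\Hess\varphi$ and $\Delta\varphi$ terms are genuinely dominated by $-\psi''$ for the chosen $\psi$ — the use of Serrin's hypothesis \eqref{cond_Serrin} is what guarantees this can be done with $a$ and the constants depending only on $n$ and $\W$ (it ensures the "bad" sign of $\Delta\varrho$ near $\partial\W$ is compensated), and one must be careful that the needed smallness of $a$ and the sign of the leading terms are uniform in the direction along $\partial\W$.
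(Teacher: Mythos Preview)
Your approach is the same as the paper's: barriers $w^{\pm}=\varphi\pm\psi\circ d$ on a tubular neighbourhood, the transformation formula \eqref{Eq_trans_varphi}, a log-type $\psi$, and the comparison principle. The paper chooses the simpler $\psi(t)=\tfrac{1}{\nu}\log(1+kt)$ (so that $\nu\psi'^2+\psi''=0$ exactly), fixes $k=\nu e^{\nu(\norm{u}_0+\norm{\varphi}_0)}$, and also imposes the auxiliary condition $t\psi'(t)\le 1$, which is what lets the error $\modulo{H(x)}-\modulo{H(y)}\le \norm{\nabla H}\,d(x)$ be absorbed into a $\psi'^2$ term; your alternative $\psi$ works too, but you should check that analogue.

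One point deserves sharpening, because you label it ``bookkeeping'' when it is really the crux. Your sentence ``the factor $(1+\psi')^2$ comes from the $W^3$ on the right-hand side after dividing by $W^2$'' is not right: $W^3/W^2=W$ is first order in $\psi'$, not second, and $\Delta\varrho$ does not have a ``bad'' sign to be compensated. What actually happens is that $\psi'W^2\Delta d$ and $n\modulo{H(x)}W^3$ are \emph{both} of order $\psi'^3$, and the Serrin condition is exactly what makes their sum nonpositive at leading order: since $\Delta d(x)\le -(n-1)\Hc_{\partial\W}(y)\le -n\modulo{H(y)}$ and $W-\psi'\le 1+\norm{\varphi}_1$, one gets
\[
\psi'W^2\Delta d + n\modulo{H(x)}W^3 \;\le\; n\psi'W^2\big(\modulo{H(x)}-\modulo{H(y)}\big)+n\modulo{H(x)}(W-\psi')W^2,
\]
and both right-hand terms are $O(\psi'^2)$. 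Only after this cancellation does the problem reduce to an ODE of the form $-\psi''\ge \nu\psi'^2$ that a logarithm can solve; without it you would face $-\psi''\gtrsim\psi'^3$, which neither your $\psi$ nor the paper's satisfies.
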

\begin{proof}
We set $d(x)=\dist(x,\partial\W)$ for $x\in\W$. Let $\tau>0$ be such that $d$ is of class $\cl^2$ over the set of points in $\W$ for which $d(x)\leq\tau$ (see \cite[L. 14.16 p. 355]{GT}, \cite[L. 3.1.8 p. 84]{han2016nonlinear} and \cite[L. 1 p. 420]{Serrin}).
Let $\psi\in\cl^2([0,\tau])$ be a non-negative function satisfying 
\begin{multicols}{3}
\begin{enumerate}
\item[P1.] $\psi'(t)\geq 1$,
\item[P2.] $\psi''(t) \leq 0$,
\item[P3.] $t\psi'(t)\leq 1$.
\end{enumerate}
\end{multicols}

For $a<\tau$ to be fixed latter on we consider the set 
$$\W_{a}=\left\{x\in M; d(x)<a \right\} .$$

We now define $w^{\pm}=\pm \psi\circ d +  \varphi$. Firstly, we estimate $\pm\M w^{\pm}$ in $\W_a$. 
Using the transformation formula \eqref{Eq_trans_varphi} one has
\begin{equation}\label{eq_barreira_superior}
\begin{split}
\pm \M w^{\pm}=&\psi'W_{\pm}^2\Delta d -\psi'\escalar{\Hess d \cdot  \nabla \varphi}{ \nabla \varphi} \\ 
& + \psi''W_{\pm}^2-\psi''\escalar{\nabla   d }{\pm \psi' \nabla d  +   \nabla \varphi}^2\\
&\pm W_{\pm}^2\Delta \varphi \mp   \escalar{\Hess\varphi\cdot (\pm\psi' \nabla d  +  \nabla \varphi)}{\pm\psi' \nabla d  +  \nabla \varphi},
\end{split}
\end{equation}
where 
$$
W_{\pm}=\sqrt{1+\norm{\nabla w^{\pm}}^2}=\sqrt{1+\norm{\pm \psi'\nabla  d  +   \nabla \varphi}^2}.
$$

Once $d$ is of class $\cl^2$ in $\W_a$ and $\psi'\geq 1$ we have 
\begin{equation}\label{BBB}
\psi'\modulo{\escalar{\Hess d \cdot \nabla \varphi}{ \nabla \varphi}} \leq \psi'^2 \norm{d}_2\norm{\varphi}_1^2.
\end{equation}

Since $\psi''<0$ and $\escalar{\nabla   d}{\pm\psi' \nabla d  +  \nabla \varphi}^2\leq \norm{\pm\psi' \nabla d + \nabla \varphi}^2$, then 
\begin{equation}\label{AAA}
\psi''W_{\pm}^2-\psi''\escalar{\nabla   d }{\pm\psi' \nabla d  +  \nabla \varphi}^2\leq \psi''.
\end{equation}

Also $\varphi$ is of class $\cl^2$ in $\W_a$ by hypothesis, so
\begin{align*}
&\modulo{\pm \Delta \varphi W_{\pm}^2\mp \escalar{\Hess \varphi \cdot (\pm\psi' \nabla d  +  \nabla \varphi)}{\pm\psi' \nabla d  +  \nabla \varphi} } \\ 
& \leq n\norm{\varphi}_2 W_{\pm}^2+ \norm{\varphi}_2\norm{\pm\psi' \nabla d  +  \nabla \varphi}^2\\
& \leq 2 n \norm{\varphi}_2 W_{\pm}^2.
\end{align*}
Notice now that
$$
\norm{\pm\psi' \nabla d  +  \nabla \varphi}^2=\left(\psi'^2+2 \psi'\escalar{\pm\nabla d}{\nabla \varphi}+\norm{\nabla \varphi}^2\right)
\leq \left(1+\norm{\varphi}_1\right)^2\psi'^2,
$$
hence
\begin{equation}\label{est_W2}
W_{\pm}^2 \leq  1 + \left(1+\norm{\varphi}_1\right)^2\psi'^2 \leq 2\left(1+\norm{\varphi}_1\right)^2\psi'^2.
\end{equation}
Therefore, 
\begin{equation}\label{CCC}
\begin{split}
\modulo{\pm \Delta \varphi W_{\pm}^2 \mp \escalar{\Hess \varphi \cdot (\pm\psi' \nabla d  +  \nabla \varphi)}{\pm\psi' \nabla d  +  \nabla \varphi} } \phantom{.}\\ 
\leq 4n\norm{\varphi}_2\left(1+\norm{\varphi}_1\right)^2\psi'^2.
\end{split}
\end{equation}

Substituting \eqref{AAA}, \eqref{BBB}, \eqref{CCC} in \eqref{eq_barreira_superior} it follows
\begin{equation}\label{est_Mwpm}
\pm\M w^{\pm}\leq \psi' W_{\pm}^2 \Delta d + \psi''+ c \psi'^2,
\end{equation}
where
\begin{equation}\label{constantec0}
c=\norm{d}_2\norm{\varphi}_1^2+4n\norm{\varphi}_2\left(1+\norm{\varphi}_1\right)^2.
\end{equation}
From \eqref{est_Mwpm} we obtain
\begin{equation}\label{eq_barreira_superior_Q_2}
\begin{array}{r}
\pm \Q_{ } w^{\pm} \leq  \psi'W_{\pm}^2\Delta d + \psi''+ c \psi'^2 + n  \modulo{H(x)}W_{\pm}^{3}.
\end{array}
\end{equation}

Let now $y=y(x)$ be the point of $\partial\W$ nearest to $x$. Then $x$ belongs to the segment $\{y+tN_y; 0\leq t \leq a\}$, where $N$ is the inner unit normal to $\partial\W$. Let us denote by $\Gamma_t$ the hypersurface parallel to $\partial\W$ at a distance $t$  contained in $\W_a$. We define $\Hc(t):=\Hc_{\Gamma_t}(y + t N_y)$ where $\Hc_{\Gamma_t}$ is the mean curvature of $\Gamma_t$ with respect to the normal that coincides with $\nabla d(x)=\frac{x-y}{\norm{x-y}}$ at $x$. 
Recall that $\Delta d(x)=-(n-1)\Hc_{\Gamma_{d(x)}}(x)$ (see \cite[\S 14.6 p 354]{GT}, \cite[\S 3.1 p. 80]{han2016nonlinear} and \cite[\S 3 p. 420]{Serrin}). In addition, 
$$\Hc_{\Gamma_t}(y+tN_y) \geq \Hc_{\partial\W}(y) $$
since $\Hc'(t)\geq (\Hc(t))^2 \geq 0$ in $[0,a)$ (see \cite[p. 485]{Serrin}). 
Using also the Serrin condition \eqref{cond_Serrin} we get
\begin{equation}\label{est_usar_hiperb}
\Delta d(x) \leq \Delta d(y) \leq -n \modulo{H(y)} \ \forall \ x\in\W_a
 \end{equation}

Substituting \eqref{est_usar_hiperb} in \eqref{eq_barreira_superior_Q_2} we obtain 
{\small
\begin{equation}\label{Qw_medio}
\begin{split}
\pm \Q_{ } w^{\pm} \leq &   n \psi'W_{\pm}^2( \modulo{H(x)} -\modulo{H(y)}) +n \modulo{H(x)}W_{\pm}^2 \left(W_{\pm}-\psi'\right) + \psi''+ c \psi'^2   .
\end{split}
\end{equation}}%

Besides,
$$
\modulo{H(x)}-\modulo{H(y)}\leq h_1(1+\norm\varphi_1)d(x),
$$
where $h_1=\sup\limits_{\W}\norm{\nabla  H }.$
Recalling also of \eqref{est_W2} one gets
\[n  \psi'W_{\pm}^2( \modulo{H(x)} -\modulo{H(y)} )\leq  2nh_1\left(1+\norm{\varphi}_1\right)^3 d(x)(\psi'(d(x)))^3.\]
Using the assumption P3 it follows
\begin{equation}\label{Termo2}
\begin{array}{c}
n  \psi'W_{\pm}^2( \modulo{H(x)} -\modulo{H(y)} )\leq 2 n h_1 \left(1+\norm{\varphi}_1\right)^3 \psi'^2.
\end{array}
\end{equation}

On the other hand, 
\begin{equation}\label{Wmenospsi_0}
W_{\pm}-\psi'\leq 1+\norm{\pm \psi'\nabla d +\nabla \varphi} -\psi' \leq 1+\norm{\varphi}_1.
\end{equation}
From \eqref{est_W2} and \eqref{Wmenospsi_0} we obtain 
\begin{equation}\label{Wmenospsi}
n \modulo{H(x)} \left(W_{\pm}-\psi'\right)W_{\pm}^2\leq 2 n h_0\left(1+\norm{\varphi}_1\right)^3\psi'^2,
\end{equation}
where $h_0=\sup\limits_{\W}\modulo{H}.$

Using \eqref{Termo2} and \eqref{Wmenospsi} in \eqref{Qw_medio} we get 
$$ \pm \Q_{ } w^{\pm} \leq \left(c+2n\norm{H}_{1}\left(1+\norm{\varphi}_1\right)^3\right)\psi'^2+\psi''.$$

Recalling the expression for $c$ given in \eqref{constantec0} and making some algebraic computations we infer that 
\begin{align*}
c+2n\norm{H}_{1}\left(1+\norm{\varphi}_1\right)^3 < C \left(1+\norm{\varphi}_2 + \norm{H}_{1}\right)\left(1+\norm{\varphi}_1\right)^3,
\end{align*}
where 
\begin{equation}\label{C_kappa}
C= 4n\left(1+\norm{d}_2+1/\tau\right).
\end{equation}
Choosing 
\begin{equation}\label{nu}
\nu= C \left(1+ \norm{H}_1+ \norm{\varphi}_2\right)\left(1+\norm{\varphi}_1\right)^3
\end{equation}
we define $\psi$ by 
$$
\psi(t)=\dfrac{1}{\nu}\log(1+kt). 
$$
So, 
\begin{equation}\label{dpsi}
\psi'(t)=\dfrac{k}{\nu(1+kt)}
\end{equation}
and 
\begin{equation}\label{ddpsi}
\psi''(t)=-\dfrac{k^2}{\nu(1+kt)^2},
\end{equation}
hence
$$
\pm \Q w^{\pm} < \nu\psi'^2+\psi''=0, \ \mbox{ in } \ \W_a.
$$
Besides
$$ t\psi'(t)=\dfrac{kt}{\nu(1+kt)}\leq \dfrac{1}{\nu}<1,$$
which is property P3. From \eqref{ddpsi} we see that property P2 is also satisfied. Another consequence of \eqref{ddpsi} is that $\psi'(t)>\psi'(a)$ for all $t\in[0,a]$, thus property P1 is ensured provided that
\begin{equation}\label{paraP1}
\psi'(a)=\dfrac{k}{\nu(1+ka)}=1.
\end{equation}

Furthermore, choosing 
\begin{equation}\label{esc_psia_curv_media}
\psi(a) = \dfrac{1}{\nu}\log(1+ka) =  \norm{u}_0+\norm{\varphi}_0,
\end{equation}
we have 
$$\pm w^{\pm}(x)=\psi(a)\pm \varphi(x)=\norm{u}_0+\norm{\varphi}_0 \pm \varphi(x) \geq \pm u(x) \  \forall \ x\in \partial\W_a\setminus\partial\W.$$ 
By combining \eqref{paraP1} and \eqref{esc_psia_curv_media} we see that 
\begin{equation}\label{cte_k}
k=\nu\ds e^{\nu(\norm{u}_0+\norm{\varphi}_0)}
\end{equation}
and, therefore, 
$$
a
=\dfrac{e^{\nu(\norm{u}_0+\norm{\varphi}_0)}-1}{\nu\ds e^{\nu(\norm{u}_0+\norm{\varphi}_0)}}.
$$
Note also that $a<\frac{1}{\nu}<\tau$ as required. 

Finally, if $x\in\partial\W$, then $w^{\pm}(x)=\pm \psi(0)+\varphi(x)=u(x)$. By the maximum principle we can conclude that $w^-\leq u \leq w^+ $ in $\W_a$, thus 
$$ -\psi\circ d  \leq u - \varphi \leq \psi\circ d \mbox{ in } \W_a.$$
Observe that 
$$ -\psi\circ d  = u - \varphi = \psi\circ d =0 \mbox{ in } \partial\W,$$
thus, for $y\in\partial\W$ and $0 \leq t \leq a$, we have 
$$-\psi(t) + \psi(0) \leq (u-\varphi) (y+t N_y) - (u-\varphi)(y) \leq \psi(t)-\psi(0).$$
Dividing by $t>0$ and passing to the limit as $t$ goes to zero we infer that
$$ -\psi'(0) \leq \escalar{\nabla u(y)-\nabla\varphi(y)}{N_y}\leq \psi'(0),$$
so,
$$ \mod{\escalar{\nabla u(y)-\nabla \varphi(y)}{N_y}}\leq \psi'(0).$$
Therefore,
\begin{equation}\label{est_grad_fronteira_1}
\modulo{\escalar{\nabla u(y)}{N_y}}\leq \modulo{\escalar{\nabla \varphi(y)}{N_y}} + \psi'(0).
\end{equation}
Since $u\equiv\varphi$ in $\partial\W$ 
$$ \nabla u(y)=(\nabla \varphi(y))^{T}+\escalar{\nabla u(y)}{N_y} N_y.$$
Using \eqref{est_grad_fronteira_1} we derive 
\begin{align*}
\norm{\nabla u(y)}^2=&\norm{(\nabla \varphi(y))^{T}}^2+{\escalar{\nabla u(y)}{N}}^2\\
\leq&\norm{(\nabla \varphi(y))^{T}}^2+ \left(\mod{\escalar{\nabla \varphi(y)}{N}} + \psi'(0)\right)^2\\
=&\norm{(\nabla \varphi(y))^{T}}^2+ \escalar{\nabla \varphi(y)}{N}^2 + 2\mod{\escalar{\nabla \varphi(y)}{N}} \psi'(0)+ (\psi'(0))^2\\
\leq&\norm{\nabla \varphi(y)}^2+ 2\norm{\nabla \varphi(y)} \psi'(0)+(\psi'(0))^2\\
=&\left(\norm{\nabla \varphi(y)} + \psi'(0)\right)^2.
\end{align*}
which yields the desired estimate.
\end{proof}

{The combination of Theorems \ref{teo_Est_global_gradiente} and \ref{teo_Est_gradiente_fronteira} with Theorem \ref{T_Exist_quaselineares} implies that the existence program for the Dirichlet problem \eqref{ProblemaP} is reduced to finding an a priori height estimate. In fact, the following theorem holds.}
\begin{teo}[{\cite[Th. 16.9 p. 407]{GT}}]\label{T_Exist_quaselineares_C0}
Let $\Omega\subset \R^n$ be a bounded domain with $\partial\W$ of class $\cl^{2,\alpha}$ for some $\alpha\in(0,1)$, $\varphi\in \cl^{2,\alpha}(\overline{\Omega})$ and {$H\in\cl^{1,\alpha}(\overline{\W})$}. Assume that the family of solutions of the related problems \eqref{ProblemaPsigma} is uniformly bounded. If 
\begin{equation}\label{StrongSerrinCondition_exist_c0}
(n-1)\Hc_{\partial\W}(y)\geq n \modulo{H\left(y\right)} \ \forall \ y\in\partial\W,
\end{equation}
then the Dirichlet problem \eqref{ProblemaP} has a unique solution in $\cl^{2,\alpha}(\overline{\W})$.
\end{teo}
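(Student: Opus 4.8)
The plan is to verify the one hypothesis of Theorem~\ref{T_Exist_quaselineares} — an a priori bound $\norm{u}_{\cl^1(\overline{\W})} < M$ with $M$ independent of both $u$ and $\tau$ for every solution of the related problems \eqref{ProblemaPsigma} — and then let that theorem supply existence and uniqueness. Half of this bound is already given: by assumption the family of solutions of \eqref{ProblemaPsigma} is uniformly bounded, so $\sup_\W\mod{u}\leq M_0$ for some $M_0$ not depending on $\tau$ or $u$. What remains is a uniform gradient bound, and this is where the Serrin condition enters.

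The key observation is that a solution $u$ of \eqref{ProblemaPsigma} is precisely a solution of \eqref{ProblemaP} for the data $(\tau H,\tau\varphi)$; in particular it solves \eqref{operador_minimo_1_coord} with curvature function $\tau H\in\cl^{1,\alpha}(\W)$, hence $u\in\cl^3(\W)$ by interior Schauder theory. Since $\tau\in[0,1]$, the hypothesis \eqref{StrongSerrinCondition_exist_c0} gives
\begin{equation*}
(n-1)\Hc_{\partial\W}(y) \geq n\mod{H(y)} \geq n\tau\mod{H(y)} = n\mod{\tau H(y)} \qquad \forall\, y\in\partial\W,
\end{equation*}
so the structure condition \eqref{cond_Serrin} holds for the rescaled data and Theorem~\ref{teo_Est_gradiente_fronteira} applies, yielding
\begin{equation*}
\sup_{\partial\W}\norm{\nabla u} \leq \norm{\tau\varphi}_1 + e^{C\left(1+\norm{\tau H}_1+\norm{\tau\varphi}_2\right)\left(1+\norm{\tau\varphi}_1\right)^3\left(\norm{u}_0+\norm{\tau\varphi}_0\right)}.
\end{equation*}
Because $\norm{\tau\varphi}_k\leq\norm{\varphi}_k$, $\norm{\tau H}_1\leq\norm{H}_1$ and $\norm{u}_0\leq M_0$, the right-hand side is majorized by a constant $M_1$ depending only on $n$, $\W$, $\norm{\varphi}_2$, $\norm{H}_1$ and $M_0$, not on $\tau$ or $u$.

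Feeding this boundary bound into the interior estimate, Theorem~\ref{teo_Est_global_gradiente} (again with curvature function $\tau H$) gives
\begin{equation*}
\sup_\W\norm{\nabla u} \leq \left(\sqrt{3}+\sup_{\partial\W}\norm{\nabla u}\right)\exp\left(2\sup_\W\mod{u}\left(1+8n\norm{\tau H}_1\right)\right) \leq \left(\sqrt{3}+M_1\right)e^{2M_0\left(1+8n\norm{H}_1\right)} =: M_2 ,
\end{equation*}
with $M_2$ independent of $\tau$ and $u$. Taking $M = M_0 + M_2 + 1$ produces the required uniform bound in $\cl^1(\overline{\W})$. Since $\partial\W$ is $\cl^{2,\alpha}$, $\varphi\in\cl^{2,\alpha}(\overline{\W})$ and $H\in\cl^{1,\alpha}(\overline{\W})\subset\cl^{\alpha}(\overline{\W})$, Theorem~\ref{T_Exist_quaselineares} applies and delivers a unique solution of \eqref{ProblemaP} in $\cl^{2,\alpha}(\overline{\W})$, finishing the proof.

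The only delicate point is the bookkeeping in the middle paragraph: one must check that rescaling the data by $\tau\in[0,1]$ both preserves the structural inequality \eqref{cond_Serrin} — which it does, precisely because $\tau\le 1$ — and keeps every constant in Theorems~\ref{teo_Est_global_gradiente} and~\ref{teo_Est_gradiente_fronteira} under control, which follows from the monotone dependence of those constants on $\norm{H}_1$, $\norm{\varphi}_1$, $\norm{\varphi}_2$ and $\sup_\W\mod{u}$. No new analysis is needed once the two a priori gradient estimates of Section~\ref{chapter_estimates} are in hand; the substance of the theorem is exactly that the Serrin condition is the hypothesis that powers the boundary gradient estimate, and hence the whole existence program.
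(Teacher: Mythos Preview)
Your proof is correct and follows essentially the same route as the paper's: reduce to Theorem~\ref{T_Exist_quaselineares} by combining the assumed uniform height bound with the boundary gradient estimate (Theorem~\ref{teo_Est_gradiente_fronteira}, enabled by the Serrin condition for $\tau H$) and the global gradient estimate (Theorem~\ref{teo_Est_global_gradiente}, enabled by the $\cl^3$ interior regularity coming from $H\in\cl^{1,\alpha}$). The only cosmetic difference is the order in which the two gradient estimates are invoked, and the paper spells out the $\cl^3$ bootstrap a bit more explicitly via two applications of the linear interior regularity theorem.
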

\begin{proof} Let $M$ an a priori bound for the family of solutions of the related problems \eqref{ProblemaPsigma}. For $\tau\in[0,1]$ fixed, let $u$ a solution of \eqref{ProblemaPsigma}. 

Note now that Theorem \ref{teo_Est_global_gradiente} can be applied provided $u\in\cl^3({\W})$. Taking into account that $H\in\cl^{1,\alpha}(\overline{\W})$, that is a consequence of applying twice the following interior regularity theorem for linear operators. 

\medskip
\noindent\colorbox{shadecolor}{
\begin{minipage}{.98\textwidth}
\begin{teorema}[Interior regularity {\cite[Th. 6.17 p. 109]{GT}}]
Let $\Omega\subset\mathbb{R}^n$ a domain. Suppose that $u\in\cl^{2}(\W)$ satisfies 
$$\LL u=\ds\sum_{ij}a_{ij}(x)\Dij u+\ds\sum_{i}b_i(x)\Di u +c(x)u=f(x)$$ 
where $f$ and the coefficients of the elliptic operator $\LL$ belong to $\cl^{k,\alpha}({\W})$. Then $u\in\cl^{k+2,\alpha}({\W})$. 
\end{teorema}
\end{minipage}}
\medskip

\noindent{In fact, $u$ can be seen as a solution of the linear equation $\LL^u u= \tau nH(x)$ where $\LL^u$ is defined in \eqref{definitionLL}.} Therefore, 
\begin{align*}
\sup_{\W}\norm{\nabla u(x)}\leq&\left(\sqrt{3}+\sup\limits_{\partial\W}\norm{\nabla u}\right)\exp\left(2\sup\limits_{\W}\modulo{u}\left(1+8n\left(\tau\norm{H}_1\right)\right)\right)\\
\leq&\left(\sqrt{3}+\sup\limits_{\partial\W}\norm{\nabla u}\right)\exp\left(2M\left(1+8n\left(\norm{H}_1\right)\right)\right).
\end{align*}
But, on account of assumptions \eqref{StrongSerrinCondition_exist_c0} one has for any $y\in\partial\W$ 
$$(n-1)\Hc_{\partial\W}(y)\geq n \modulo{H(y)} \geq \tau n \modulo{H(y)}.$$
Then $u$ satisfies the estimate \eqref{EstGradFront} stated on Theorem \ref{teo_Est_gradiente_fronteira}. That is, there exists some constant $C=C(n,\W)$ such that
\begin{align*}
\sup\limits_{\partial\W}\norm{\nabla u}\leq & \norm{\tau\varphi}_1 + \ds e^{C\left(1+ \norm{H}_1+ \norm{\tau\varphi}_2\right)\left(1+\norm{\tau\varphi}_1\right)^3\left(\norm{u}_0+\norm{\tau\varphi}_0\right)}\\
\leq & \norm{ \varphi}_1 + \ds e^{C\left(1+ \norm{H}_1+ \norm{ \varphi}_2\right)\left(1+\norm{ \varphi}_1\right)^3\left(M+\norm{ \varphi}_0\right)}.
\end{align*}


Thus, the family of solutions of the related problems \eqref{ProblemaPsigma} is bounded in $\cl^1(\overline{\W})$ independently of $\tau$. Theorem \ref{T_Exist_quaselineares} ensures the existence of a unique solution $u\in\cl^{2,\alpha}(\overline{\W})$ for our problem \eqref{ProblemaP}. 
\end{proof}

The following theorem guarantees an a priori height estimate if the function $H$ satisfies a further hypothesis in addition to the Serrin condition. 
\begin{teo}[A priori height estimate {\cite[p. 484]{Serrin}}]\label{teo_Est_altura}
Let $\W\in \R^n$ be a bounded domain with $\partial\W$ of class $\cl^2$. Let $H\in\cl^{1}(\overline{\W})$ satisfying 
\begin{equation}\label{cond_H_Ricci_sup}
\norm{\nabla H(x)} \leq \dfrac{n}{n-1}\left(H(x)\right)^2 \ \forall \ x\in\W
\end{equation}
and
\begin{equation}\label{cond_Serrin_hightest_teo}
(n-1)\Hc_{\partial\W}(y)\geq n \modulo{H(y)} \ \forall \ y\in\partial\W.
\end{equation}
If $u\in\cl^2(\W)\cap\cl^0(\overline{\W})$ is a solution of the mean curvature equation \eqref{operador_minimo_1_coord} in $\W$, then 
\begin{equation}\label{Est_Altura}
\sup\limits_{\W}\modulo{u}\leq \sup\limits_{\partial\W} \modulo{u} +\dfrac{e^{\mu\delta}-1}{\mu},
\end{equation}
where $\mu>n\sup\limits_{\overline{\W}}\modulo{H}$ and $\delta=\diam(\W)$.
\end{teo}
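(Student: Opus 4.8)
The plan is to construct an explicit supersolution of the mean curvature equation that dominates $u$ from above (and, by symmetry, its negative dominates $u$ from below), using the comparison principle, Theorem \ref{PM_quasilineares}. The natural ansatz is a radial barrier of the form $w(x) = \sup_{\partial\W}\modulo{u} + h(\dist(x,z))$ for a suitable point $z$ and a concave increasing function $h$, but here it is cleaner to use a function of a single linear coordinate, say $w(x) = \sup_{\partial\W}\modulo{u} + g(x_1)$ where $\W$ lies in a slab $\{0 \leq x_1 \leq \delta\}$ after translation and rotation (possible since $\delta = \diam(\W)$). Choosing $g(t) = \frac{1}{\mu}\left(e^{\mu\delta} - e^{\mu(\delta - t)}\right)$, one checks that $g(0) = 0$, $g$ is increasing and concave, and $g(\delta) = \frac{e^{\mu\delta}-1}{\mu}$, which is exactly the additive constant appearing in \eqref{Est_Altura}.

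First I would verify, using the transformation formula \eqref{calc_Q_w} with $\varrho(x) = x_1$ (an affine, hence distance, function with $\Delta\varrho \equiv 0$), that
\[
\M w = g''(x_1) = -\mu^2 e^{\mu(\delta - x_1)}/\mu \cdot \mu = -\mu\, g'(x_1)(\text{correction}),
\]
more precisely $\M w = \psi'(1+\psi'^2)\Delta\varrho + \psi'' = \psi''$ since $\Delta\varrho = 0$, so $\M w = g''(x_1)$. Then $\Q w = \M w - nH(x)W^3 = g''(x_1) - nH(x)(1 + g'(x_1)^2)^{3/2}$. The goal is to show $\Q w \leq 0 = \Q u$ in $\W$. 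Since $g'' < 0$ this requires controlling $nH(x)(1+g'^2)^{3/2}$ from above; with $\mu > n\sup_{\overline\W}\modulo{H}$ and the explicit exponential form of $g$ one gets $g'' = -\mu g'$... wait—rather, from $g'(t) = e^{\mu(\delta-t)}$ and $g''(t) = -\mu e^{\mu(\delta-t)} = -\mu g'(t)$, so $g'(t) \geq 1$ on $[0,\delta]$ and the inequality $\Q w \leq 0$ reduces to $\mu g'(x_1) \geq nH(x)(1 + g'(x_1)^2)^{3/2} / g'(x_1) \cdot g'(x_1)$, i.e. to $\mu \geq n\modulo{H(x)}\,(1+g'^2)^{3/2}/g'$... which is \emph{not} automatic since $(1+g'^2)^{3/2}/g' \to \infty$. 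The fix is the standard one: the barrier need only work near $\partial\W$, or one bounds using condition \eqref{cond_Serrin_hightest_teo} together with the monotonicity of parallel mean curvatures established in the proof of Theorem \ref{teo_Est_gradiente_fronteira}; alternatively one replaces the slab argument by the correct computation in which the factor $W^3$ is absorbed.

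The cleanest route, and the one I would actually carry out, uses the observation from the proof of Theorem \ref{teo_Est_gradiente_fronteira} that $\Delta d(x) \leq -n\modulo{H(y(x))}$ under the Serrin condition \eqref{cond_Serrin_hightest_teo}, combined with condition \eqref{cond_H_Ricci_sup} which propagates the curvature bound into the interior. So I would set $w = \sup_{\partial\W}\modulo u + \psi \circ d$ with $d(x) = \dist(x,\partial\W)$, use the transformation formula \eqref{calc_Q_w} to get $\M w = \psi'(1+\psi'^2)\Delta d + \psi''$, hence
\[
\Q w = \psi'(1+\psi'^2)\Delta d + \psi'' - nH(x)(1+\psi'^2)^{3/2},
\]
then apply $\Delta d \leq -n\modulo{H}$ and $(1+\psi'^2)^{3/2} \leq (1+\psi'^2)\sqrt{1+\psi'^2} \leq (1+\psi'^2)(1+\psi')$, together with $\psi' \geq 1$, to reduce everything to an ODE inequality of the form $\psi'' + \mu\,\psi'(1+\psi'^2) \leq 0$ that is solved by $\psi(t) = \frac{1}{\mu}(e^{\mu t}... )$ — up to the precise constants, the choice that makes $\psi(\delta) = \frac{e^{\mu\delta}-1}{\mu}$. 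The \textbf{main obstacle} is handling the factor $(1+\psi'^2)^{3/2}$ honestly: it grows faster than $\psi'(1+\psi'^2)$, so one must use that $\mu$ is \emph{strictly} larger than $n\sup\modulo H$ and that near $\partial\W$ the relevant quantities are controlled by the ODE's dominant term, which forces the barrier to be built on $d$ (not a linear coordinate) so that the sign-definite curvature term $\psi'(1+\psi'^2)\Delta d$ is available to counteract it. Once the barrier $w$ satisfies $\Q w \leq 0 \leq$... $\leq \Q u$ (i.e. $\Q w \leq \Q u$) in $\W$ and $w \geq u$ on $\partial\W$ (which holds since $\psi \geq 0$ and $\psi\circ d = 0$ there while $w = \sup_{\partial\W}\modulo u \geq u$), Theorem \ref{PM_quasilineares} gives $u \leq w \leq \sup_{\partial\W}\modulo u + \frac{e^{\mu\delta}-1}{\mu}$ throughout $\W$; applying the same argument to $-u$ (using that $-H$ satisfies the same hypotheses since \eqref{cond_H_Ricci_sup} and \eqref{cond_Serrin_hightest_teo} involve only $\modulo H$) yields the lower bound, and together they give \eqref{Est_Altura}.
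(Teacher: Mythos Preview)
Your overall strategy---build a barrier $w=\sup_{\partial\W}\modulo{u}+\psi\circ d$ with $d=\dist(\cdot,\partial\W)$, compute $\M w$ via formula \eqref{calc_Q_w}, and invoke the comparison principle---is the same as the paper's. But there is a genuine gap that would make your argument fail as written.

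The distance function $d$ is \emph{not} of class $\cl^2$ on all of $\W$; it is $\cl^2$ only on the set $\W_0\subset\W$ of points having a unique nearest boundary point (the complement of the cut locus of $\partial\W$). Hence $w=\psi\circ d$ is not in $\cl^2(\W)$, and Theorem~\ref{PM_quasilineares} cannot be applied to $u$ and $w$ on $\W$ directly, as you propose in your last paragraph. The paper circumvents this by a contradiction argument: assuming $v=u-w$ attains a positive maximum $m$ at some $x_0\in\W$, one shows---using that the level set $\{u=u(x_0)\}$ is $\cl^2$ near $x_0$ and a tangent-ball argument---that the segment from $x_0$ to its nearest boundary point is orthogonal to this level set, which forces $x_0\in\W_0$. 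Only then, working locally in $\W_0$ where $d$ is $\cl^2$, does the comparison $\Q(w+m)\leq \Q u$ combined with $u\leq w+m$, $u(x_0)=w(x_0)+m$ yield a contradiction via the maximum principle. This localization step is essential and you have not addressed it.

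A second, smaller gap: you say condition \eqref{cond_H_Ricci_sup} ``propagates the curvature bound into the interior'' to get $\Delta d(x)\leq -n\modulo{H(x)}$, but you do not indicate the mechanism. In Theorem~\ref{teo_Est_gradiente_fronteira} one only obtains $\Delta d(x)\leq -n\modulo{H(y(x))}$ with $y(x)\in\partial\W$. To upgrade this to $\Delta d(x)\leq -n\modulo{H(x)}$ on $\W_0$, the paper sets $h(t)=\tfrac{n}{n-1}H(y+tN_y)$ and $\Hc(t)=\Hc_{\Gamma_t}(y+tN_y)$, uses \eqref{cond_H_Ricci_sup} to get $\modulo{h'}\leq h^2$, combines this with the Riccati inequality $\Hc'\geq\Hc^2$, and integrates the resulting differential inequalities for $\Hc\pm h$ from the Serrin condition at $t=0$. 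This ODE argument is the actual content of hypothesis \eqref{cond_H_Ricci_sup} and cannot be skipped. Once $\Delta d\leq -n\modulo{H}$ is available on $\W_0$, the choice $\phi(t)=\tfrac{e^{\mu\delta}}{\mu}(1-e^{-\mu t})$ gives $\phi''=-\mu\phi'$ and $\phi'\geq 1$, whence $\M w\leq -n\modulo{H}\phi'(2+\phi'^2)<-n\modulo{H}(1+\phi'^2)^{3/2}$ cleanly; your attempt to bound $(1+\psi'^2)^{3/2}$ by $(1+\psi'^2)(1+\psi')$ goes in the wrong direction.
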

\begin{proof}
Let $d(x)=\dist(x,\partial\W)$ for $x\in\W$. Let $\W_0$ be the biggest open subset of $\W$ having the unique nearest point property,
{then $d\in\cl^2(\W_0)$ (see \cite[p. 409, Lemmas 14.16 and 14.17 p. 355]{GT}, \cite[p. 481, \S 3 p. 420]{Serrin}).}

We now define $w=\phi\circ d + \sup\limits_{\partial\W}\modulo{u}$ over $\W$, where 
$$\phi(t)=\dfrac{\ds e^{\mu\delta}}{\mu}\left(1-e^{-\mu t}\right).$$ 

If we prove that $\mod{u}\leq w$ in $\overline{\W}$ we obtain the desired estimate. By the sake of contradiction we suppose first that the function $v=u-w$ attains a maximum $m>0$ at $x_0\in{\W}$ (note that $u\leq w$ in $\partial\W$). 

Let $y_0\in\partial\W$ be such that $d(x_0)=\dist(x_0,y_0)=t_0$ and $\gamma$ the straight line segment joining $x_0$ to $y_0$. Restricting $u$ and $w$ to $\gamma$ we see that $v'(t_0)=0$. 
Hence, 
$u'(t_0)=w'(t_0)=\phi'(t_0)>0$ which implies that $\nabla u(x_0)\neq 0$. Therefore, 
$\Gamma_0=\left\{x\in\W;u(x)=u(x_0)\right\}$ is of class $\cl^2$ near $x_0$. Then, there exists a small ball $B_{\epsilon}(z_0)$ tangent to $\Gamma_0$ in $x_0$ such that 
\begin{equation}\label{eq_bola_1}
u > u(x_0) \mbox{ in } \overline{B_{\epsilon}(z_0)}\setminus\{x_0\}. 
\end{equation}
We note that
$$\dist(z_0,y_0)\leq \dist(z_0,x_0)+\dist(x_0,y_0)=\epsilon + d(x_0).$$
Hence, for $\tilde{z}$ lying in the intersection of $\partial B_{\epsilon}(z_0)$ with the straight line segment joining $z_0$ to $y_0$, we have
$$d(\tilde{z})\leq \dist(\tilde{z},y_0)=\dist(z_0,y_0)-\epsilon \leq d(x_0)+\epsilon -\epsilon = d(x_0).$$
Thus, $w(\tilde{z})\leq w(x_0)$ since $\phi$ is increasing. Consequently,
$$ u(\tilde{z})-w(x_0) \leq u(\tilde{z})-w(\tilde{z}) \leq u(x_0)-w(x_0)$$
and $u(\tilde{z})\leq u(x_0)$. By \eqref{eq_bola_1} one has that $\tilde{z}=x_0$, so $z_0$ belongs to $\gamma$ and $\gamma$ is orthogonal to $\Gamma_0$.  
This ensures that $x_0\in\W_0$ because if there exists $y_1\neq y_0$ satisfying $d(x_0)=\dist(x_0,y_1)$, then 
the straight line joining $y_1$ and $x_0$ is also orthogonal to $\Gamma_0$, which is a contradiction.

However, let us show that this is also impossible. Using the transformation formula \eqref{calc_Q_w} one has
\begin{equation}\label{Mw_est_altura_0}
\M w =  \phi'(1+\phi'^2) \Delta d + {\phi''} \ \mbox{ in } \W_0.
\end{equation}
We first estimate $\Delta d$ in $\W_0$. For $x\in\W_0$, let $y=y(x)$ in $\partial\W$ be the nearest point to $x$, so $x$ belongs to the segment $\{y + t N_y; t>0\}$, where $N$ is the inner normal to $\partial\W$. Note that $y$ is now fixed. {Let us denote by $\{\Gamma_t\}$ the hypersurface parallel to some portion of $\partial\W$ containing $y$ at distance $t$. So $x$ belongs to $\Gamma_{d(x)}$.

Let 
$$h(t)=\frac{n}{n-1}H\left(y + t N_y\right).$$ 
Therefore
$$ h'(t)=\dfrac{n}{n-1}\escalar{\nabla H(y + t N_y)}{N_y}. $$
Taking into account the additional hypothesis \eqref{cond_H_Ricci_sup} we see that
$$\modulo{h'(t)}\leq \dfrac{n}{n-1}\norm{\nabla H(y + t N_y)} \leq   (h(t))^2 ,$$
hence
$$\modulo{h'(t)}-  (h(t))^2 \leq 0.$$
Recalling again that $\Hc'(t)\geq (\Hc(t))^2 $ (see \cite[p. 485]{Serrin}) it follows 
\begin{equation}\label{desig_sem_ricc}
\Hc'(t) \geq  \left(\Hc(t)\right)^2 + \modulo{h'(t)} - (h(t))^2 .
\end{equation}
Then,
\begin{equation}\label{desig_sem_ricc_v}
(\Hc(t) - h(t))'\geq \left(\Hc(t)+h(t)\right)\left(\Hc(t)-h(t)\right)
\end{equation}
and
\begin{equation}\label{desig_sem_ricc_g}
(\Hc(t) + h(t))'\geq \left(\Hc(t)-h(t)\right)\left(\Hc(t)+h(t)\right).
\end{equation}
Let us define $v(t)=\Hc(t)-h(t)$ and $g(t)=\Hc(t)+h(t)$. 
From \eqref{desig_sem_ricc_v} one has 
$$v'(t) \geq  g(t) v(t)$$
Multiplying this inequality by $\ds e^{\int_0^t g(s)ds}$, it results 
$$\left(\dfrac{v(t)}{\ds e^{\int_0^t g(s)ds}}\right)'\geq 0,$$
so 
$$\ds\dfrac{v(t)}{\ds e^{\int_0^t g(s)ds}}\geq v(0)=\Hc(0)-h(0).$$
From the Serrin condition \eqref{cond_Serrin_hightest_teo} it follows that
$$\modulo{h(0)} = \dfrac{n}{n-1} \modulo{H\left(y\right)}\leq \Hc_{\partial\W}(y) =\Hc(0) .$$
thus, $v(t)\geq 0$ and $\Hc(t)\geq h(t).$

Using \eqref{desig_sem_ricc_g} we obtain in a similar way that $\Hc(t)\geq -h(t).$
Therefore, 
$$
\Hc(t)\geq\modulo{h(t)},
$$
that is, 
$$ n \modulo{H(y+tN_y)}\leq  (n-1)\Hc_{\Gamma_t}(y+tN_y).$$
Consequently,
\begin{equation}\label{forRemark} 
n \modulo{H(x)}\leq  (n-1)\Hc_{\Gamma_{d(x)}}(x).
\end{equation}
This proves that
$$\Delta d(x)\leq-n\modulo{H\left(x\right)} \ \forall \ x\in\W_0.$$
}
Using this estimate in \eqref{Mw_est_altura_0} we have in $\W_0$
$$\M w \leq  -n\modulo{H(x)} {\phi'}{(1+\phi'^2)}  + {\phi''}.$$
Also
$$
\phi''(t)=-\mu \ds e^{\mu(\delta-t)}=-\mu\phi'(t)<-n\modulo{H(x)}\phi'(t) 
$$
and $\phi'\geq 1$, so
%
\begin{equation}\label{est_Mw_est_alt}
\M w \leq -n\modulo{H\left(x\right)} {\phi'(2+\phi'^2)}< -n\modulo{H\left(x\right)}{\left(1+\phi'^2\right)^{3/2}}.
\end{equation}
From \eqref{est_Mw_est_alt} we conclude that 
\begin{align*} 
\Q (w+m) =\Q w = & \M w - nH\left(x\right) {\left(1+\phi'^2\right)^{3/2}}\leq 0=\Q u.
\end{align*}
Moreover, $u \leq w + m$ and $u(x_0)=w(x_0)+m$. By the maximum principle $u\equiv w+m$ in $\W_0$ which is a contradiction since $u<w+m$ in $\partial\W$. This proves that $u\leq w$ in $\overline{\W}$. 

Applying the same argument to the function $-u$ we also obtain that $-u\leq w$ in $\overline{\W}$. 
\end{proof}
\begin{obs}
The proof shows that if there exists a function $H$ satisfying the hypothesis \eqref{cond_H_Ricci_sup} in addition to the Serrin condition \eqref{cond_Serrin_hightest_teo}, then any hypersurface that is parallel to some portion of $\partial\W$ ``inherit'' the Serrin condition (see \eqref{forRemark}).  This geometric implication is the key to obtain the height estimate for solutions of the mean curvature equation in terms of its boundary values. 
\end{obs}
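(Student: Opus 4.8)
The plan is to isolate from the proof of Theorem~\ref{teo_Est_altura} the chain of differential inequalities that culminates in \eqref{forRemark}, and then to observe that \eqref{forRemark} is literally the statement that each parallel hypersurface $\Gamma_{d(x)}$ satisfies the Serrin condition. First I would fix $y\in\partial\W$ and, for $t$ in the admissible range (the one for which $d$ is of class $\cl^2$ on $\W_0$), parametrize the family $\{\Gamma_t\}$ of hypersurfaces parallel to a neighbourhood of $y$ in $\partial\W$, oriented by the normal agreeing with $\nabla d$ at the relevant point. With $\Hc(t)=\Hc_{\Gamma_t}(y+tN_y)$ and $h(t)=\frac{n}{n-1}H(y+tN_y)$ exactly as in that proof, the whole matter reduces to comparing $\Hc$ with $h$.

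For that comparison I would reuse, verbatim, the steps in the proof of Theorem~\ref{teo_Est_altura}: hypothesis \eqref{cond_H_Ricci_sup} gives $|h'(t)|\le (h(t))^2$, while the Riccati inequality for parallel hypersurfaces gives $\Hc'(t)\ge(\Hc(t))^2$ (\cite[p.~485]{Serrin}, already invoked in Theorems~\ref{teo_Est_gradiente_fronteira} and \ref{teo_Est_altura}); adding them yields \eqref{desig_sem_ricc}, hence the two linear differential inequalities \eqref{desig_sem_ricc_v} and \eqref{desig_sem_ricc_g} for $v=\Hc-h$ and $g=\Hc+h$. Then I would run the integrating-factor argument: multiplying $v'\ge g\,v$ by the positive factor $e^{-\int_0^t g(s)\,ds}$ shows $v(t)e^{-\int_0^t g(s)\,ds}$ is nondecreasing, so $v(t)\ge v(0)\,e^{\int_0^t g(s)\,ds}$. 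The Serrin condition \eqref{cond_Serrin_hightest_teo} evaluated at $y$ reads $|h(0)|\le\Hc(0)$, whence $v(0)\ge0$, giving $v(t)\ge0$ for all $t$; the symmetric manipulation of \eqref{desig_sem_ricc_g} gives $g(t)\ge0$. Together these give $\Hc(t)\ge|h(t)|$, that is $(n-1)\Hc_{\Gamma_t}(y+tN_y)\ge n|H(y+tN_y)|$.

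Finally I would note that every $x\in\W_0$ lies on exactly one such hypersurface, namely $\Gamma_{d(x)}$ through its unique foot point $y(x)\in\partial\W$, with $x=y(x)+d(x)N_{y(x)}$; specializing the last inequality to $t=d(x)$ produces \eqref{forRemark}, which is word-for-word the Serrin condition \eqref{cond_Serrin_hightest_teo} with $\partial\W$ replaced by $\Gamma_{d(x)}$ (viewed as the inner boundary of the sublevel set $\{d>d(x)\}$, oriented by $\nabla d$). This is the asserted ``inheritance.'' The closing sentence of the remark is not an additional claim to be proved: it merely records that \eqref{forRemark} is precisely the ingredient that enters \eqref{est_Mw_est_alt} in the proof above, from which \eqref{Est_Altura} follows. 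The only delicate points are bookkeeping ones already settled in the proof of Theorem~\ref{teo_Est_altura} --- the admissible range of $t$, the $\cl^2$-regularity of $d$ (hence of the $\Gamma_t$) on $\W_0$, and the sign convention relating $\Hc_{\Gamma_t}$ to $\Delta d=-(n-1)\Hc_{\Gamma_{d(x)}}$ --- so no genuine new obstacle arises.
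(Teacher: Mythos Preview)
Your proposal is correct and follows exactly the same route as the paper: you extract from the proof of Theorem~\ref{teo_Est_altura} the differential-inequality argument for $v=\Hc-h$ and $g=\Hc+h$ (with the integrating-factor step and the initial conditions supplied by \eqref{cond_Serrin_hightest_teo}) and read off \eqref{forRemark} as the Serrin condition on $\Gamma_{d(x)}$. There is nothing to add; the remark is a commentary on that proof, and your recapitulation matches it line for line.
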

\begin{obs} 
The proof shows that condition \eqref{cond_H_Ricci_sup} only needs to be valid in $\W_0$. 
\end{obs}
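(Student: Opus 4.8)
The plan is to reread the proof of Theorem~\ref{teo_Est_altura} and to locate the single spot where hypothesis \eqref{cond_H_Ricci_sup} is actually used, then to check that it is only ever evaluated at points of $\W_0$. The first thing I would record is that the whole barrier comparison already lives inside $\W_0$: the barrier $w=\phi\circ d+\sup_{\partial\W}\modulo{u}$ is differentiated twice only where $d\in\cl^2$, i.e.\ on $\W_0$ (this is precisely why \eqref{Mw_est_altura_0} is stated there), and the supposed maximum point $x_0$ of $u-w$ was forced to lie in $\W_0$ by the ball argument around \eqref{eq_bola_1}, which uses that $u$ solves \eqref{operador_minimo_1_coord} but makes no appeal to \eqref{cond_H_Ricci_sup}. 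So \eqref{cond_H_Ricci_sup} can only enter through the estimate of $\Delta d$ on $\W_0$.

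Next I would isolate that estimate. For a fixed $x\in\W_0$ with nearest boundary point $y=y(x)$, the proof introduces $h(t)=\tfrac{n}{n-1}H(y+tN_y)$ and uses $\modulo{h'(t)}\leq\tfrac{n}{n-1}\norm{\nabla H(y+tN_y)}\leq(h(t))^2$ for $0\leq t\leq d(x)$, which is exactly \eqref{cond_H_Ricci_sup} at the running point $y+tN_y$. Hence the only geometric fact I need is that the whole normal segment $\{\,y+tN_y:0\leq t\leq d(x)\,\}$ is contained in $\W_0$. This is standard: $\W_0$, the set of interior points with a unique nearest boundary point, is swept out by these minimizing segments, since for $x\in\W_0$ and $y=y(x)$ one has $\dist(y+tN_y,\partial\W)=t$ with $y$ still the unique nearest point for every $t\in[0,d(x)]$, so $y+tN_y\in\W_0$ — this is contained in the same lemmas on distance functions already invoked to get $d\in\cl^2(\W_0)$ (see \cite{GT} and \cite{Serrin}). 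Granting it, the chain of differential inequalities for $\Hc(t)$ and $h(t)$, the conclusion \eqref{forRemark}, and finally $\Delta d(x)\leq-n\modulo{H(x)}$ on $\W_0$, all use \eqref{cond_H_Ricci_sup} only at points of $\W_0$.

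The last step is to observe that nothing else in the proof refers to \eqref{cond_H_Ricci_sup}: once $\Delta d(x)\leq-n\modulo{H(x)}$ holds on $\W_0$, the bounds \eqref{est_Mw_est_alt}, the inequality $\Q(w+m)\leq0=\Q u$ in $\W_0$, the maximum principle on $\W_0$, and the resulting contradiction with $u<w+m$ on $\partial\W$ are word for word as before, and the argument applied to $-u$ is identical. Thus the conclusion of Theorem~\ref{teo_Est_altura} persists if \eqref{cond_H_Ricci_sup} is only assumed on $\W_0$. The single point that deserves care — and the only potential obstacle — is the geometric claim that normal segments issuing from points of $\W_0$ reach $\partial\W$ without leaving $\W_0$; everything else is a direct rereading of the existing argument.
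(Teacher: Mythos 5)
Your proposal is correct and follows exactly the reasoning the remark is pointing to: the hypothesis \eqref{cond_H_Ricci_sup} enters the proof of Theorem~\ref{teo_Est_altura} only through the bound $\modulo{h'(t)}\leq (h(t))^2$ along the normal segments $y+tN_y$ issuing from points of $\W_0$, and those segments stay in $\W_0$ by the standard uniqueness-of-nearest-point argument already contained in the cited lemmas on distance functions. The one subtlety you flag (that the segment does not leave $\W_0$) is real but standard, and the rest of the proof is indeed untouched.
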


We are able to prove the following theorem from Serrin. 
\begin{teo}[Serrin {\cite[p. 484]{Serrin}}]\label{T_exist_Ricci}
Let $\Omega \subset \R^n$ be a bounded domain with $\partial\W$ of class $\cl^{2,\alpha}$ for some $\alpha\in(0,1)$. 
Let $H\in\cl^{1,\alpha}(\overline{\W})$ satisfying 
\begin{equation}\label{cond_H_Ricci_exist}
\norm{\nabla H(x)} \leq \dfrac{n}{n-1}\left(H(x)\right)^2 \ \forall \ x\in\W.
\end{equation}
and
\begin{equation}\label{StrongSerrinCondition_exist}
(n-1)\Hc_{\partial\W}(y)\geq n \modulo{H\left(y\right)} \ \forall \ y\in\partial\W.
\end{equation}
Then for every $\varphi\in\cl^{2,\alpha}(\overline{\W})$ there exists a unique solution $u\in\cl^{2,\alpha}(\overline{\W})$ of the Dirichlet problem \eqref{ProblemaP}.
\end{teo}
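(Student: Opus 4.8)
The plan is to deduce the theorem directly from Theorem~\ref{T_Exist_quaselineares_C0}. All of its hypotheses --- $\partial\W$ of class $\cl^{2,\alpha}$, $\varphi\in\cl^{2,\alpha}(\overline{\W})$, $H\in\cl^{1,\alpha}(\overline{\W})$ and the Serrin condition \eqref{StrongSerrinCondition_exist} --- are already assumed here, so the only thing that still has to be checked is that the family of solutions of the related problems \eqref{ProblemaPsigma} is uniformly bounded, i.e.\ that there is a constant $M$, independent of $\tau\in[0,1]$ and of the solution, with $\sup_{\overline{\W}}\modulo{u}\le M$ for every $u$ solving $(P_\tau)$.

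To produce such an $M$ I would re-run the barrier argument of Theorem~\ref{teo_Est_altura} for a solution $u$ of $(P_\tau)$, which is a solution of the mean curvature equation \eqref{operador_minimo_1_coord} with prescribed curvature $\tau H$ and boundary datum $\tau\varphi$. A word of caution is needed: one cannot simply invoke Theorem~\ref{teo_Est_altura} with $H$ replaced by $\tau H$, because the structure condition \eqref{cond_H_Ricci_exist} is \emph{not} preserved under $H\mapsto\tau H$ when $0<\tau<1$. The point, however, is that \eqref{cond_H_Ricci_exist} is used in the proof of Theorem~\ref{teo_Est_altura} at a single place, namely to establish the inequality \eqref{forRemark}, equivalently $\Delta d(x)\le-n\modulo{H(x)}$ on $\W_0$; and this inequality concerns $H$ itself, so it remains valid. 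Since $0\le\tau\le1$, it follows at once that $\Delta d(x)\le-n\modulo{H(x)}\le-n\modulo{\tau H(x)}$ on $\W_0$, which is exactly what the supersolution computation of that proof needs (the localization of the would-be maximum of $u-w$ to $\W_0$ used there is itself independent of the structure condition).

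Hence, fixing $\mu>n\sup_{\overline{\W}}\modulo{H}$ (so a fortiori $\mu>n\sup_{\overline{\W}}\modulo{\tau H}$), $\delta=\diam(\W)$, $\phi(t)=\tfrac{e^{\mu\delta}}{\mu}\bigl(1-e^{-\mu t}\bigr)$ and $w=\phi\circ d+\sup_{\partial\W}\modulo{u}$, the argument of Theorem~\ref{teo_Est_altura} gives $\M w\le-n\modulo{\tau H(x)}\bigl(1+\phi'^2\bigr)^{3/2}$ on $\W_0$; comparing $w+m$ with $u$ via the comparison principle (Theorem~\ref{PM_quasilineares}, whose proof applies unchanged to the mean curvature equation with prescribed curvature $\tau H$) rules out $\max_{\overline{\W}}(u-w)=m>0$, and the same reasoning applied to $-u$ yields $\sup_{\overline{\W}}\modulo{u}\le\sup_{\partial\W}\modulo{\tau\varphi}+\frac{e^{\mu\delta}-1}{\mu}\le\sup_{\partial\W}\modulo{\varphi}+\frac{e^{\mu\delta}-1}{\mu}=:M$. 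This $M$ is independent of $\tau$ and of $u$, so the family of solutions of \eqref{ProblemaPsigma} is uniformly bounded and Theorem~\ref{T_Exist_quaselineares_C0} delivers the unique solution $u\in\cl^{2,\alpha}(\overline{\W})$ of \eqref{ProblemaP}.

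The main obstacle is precisely the subtlety just isolated: the height estimate of Theorem~\ref{teo_Est_altura} is not a black box into which one may feed the rescaled curvature $\tau H$, so one must observe that its proof rests on the $\tau$-free geometric inequality \eqref{forRemark} together with the trivial monotonicity $\modulo{\tau H}\le\modulo{H}$. Everything else needed for the continuity / Leray--Schauder scheme --- the global and boundary gradient estimates and the fixed-point argument --- is already packaged inside Theorems~\ref{teo_Est_global_gradiente}, \ref{teo_Est_gradiente_fronteira}, \ref{T_Exist_quaselineares} and \ref{T_Exist_quaselineares_C0}, so no further work is required there.
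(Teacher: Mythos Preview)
Your proposal is correct and follows essentially the same route as the paper: reduce to Theorem~\ref{T_Exist_quaselineares_C0} and re-run the barrier argument of Theorem~\ref{teo_Est_altura} for solutions of $(P_\tau)$, using that \eqref{est_Mw_est_alt} (equivalently \eqref{forRemark}) depends on $H$ alone and that $\tau\in[0,1]$ then gives $\Q_\tau w\le 0$. Your explicit remark that $H\mapsto\tau H$ does \emph{not} preserve \eqref{cond_H_Ricci_exist}, so one must reuse the $\tau$-free inequality $\Delta d\le -n\modulo{H}$ rather than cite Theorem~\ref{teo_Est_altura} as a black box, is exactly the point the paper handles with the single line ``once \eqref{est_Mw_est_alt} holds and $\tau\in[0,1]$''.
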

\begin{proof}
By Theorem \ref{T_Exist_quaselineares_C0} it only remains to prove that the family of solutions of the related problems \eqref{ProblemaPsigma} is uniformly bounded. Let $u$ be a solution of problem \eqref{ProblemaPsigma} for arbitrary $\tau\in[0,1]$ and let $w=\phi\circ d + \sup\limits_{\partial\W}\modulo{\varphi}$ as in the proof of Theorem \ref{teo_Est_altura}. Analogously as in the proof of that theorem, if the function $u-w$ attains a positive maximum $m$ at $x_0\in \overline{\W}$, then $x_0$ would be in $\W_0$ (the biggest open subset of $\W$ having the unique nearest point property). 
But for $x\in\W_0$ we have 
\begin{align*}
\Q_{\tau}(w + m )=  \Q_{\tau}(w)
\leq \M w + \tau n\modulo{H(x)}(1+\phi'^2)^{3/2}\leq 0
\end{align*}
once \eqref{est_Mw_est_alt} holds and $\tau\in[0,1]$. Proceeding as in the proof of Theorem \ref{teo_Est_altura}, we get that $u\leq w$ also in $\W_0$. Analogously it follows that $-u \leq w$ in $\overline{\W}$. Hence, $u$ satisfies the estimate \eqref{Est_Altura}, that is,
$$\sup\limits_{\W}\modulo{u}\leq \sup\limits_{\partial\W} \modulo{\tau\varphi} +\dfrac{e^{\mu\delta}-1}{\mu}
\leq \sup\limits_{\partial\W} \modulo{\varphi} +\dfrac{e^{\mu\delta}-1}{\mu},$$
where $\mu>n\sup\limits_{\overline{\W}}\modulo{H}$ and $\delta=\diam(\W)$.
\end{proof}

\begin{obs}
Observe that Theorem \ref{T_exist_Ricci} is not exactly the existence part in Theorem \ref{T_Serrin_Ricci} stated in the introduction. In order to reduce the differentiability assumptions on the domain and the boundary data, the original problem can be approximated by new problems having the $\cl^{2,\alpha}$ differentiability requirements. For instance, $\partial\W$ can be approximated by $\cl^{2,\alpha}$ hypersurfaces. However, it can only be guaranteed that the mean curvature of any approximating surface $\Sigma$ satisfies $(n-1)\Hc_{\Sigma}(x)\geq n\mod{H(x)}-\varepsilon$ for every $x\in\Sigma$, where $\varepsilon$ is a positive constante. Hence, a boundary gradient estimate sharper than that obtained in Theorem \ref{teo_Est_gradiente_fronteira} is needed. Furthermore, it is also required an interior gradient estimate which yields a compactness result also used in this argument. We refer the work of Serrin \cite[\S 14 p. 451]{Serrin} for further studies. 
\end{obs}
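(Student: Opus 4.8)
The plan is to deduce the existence part of Theorem~\ref{T_Serrin_Ricci} from Theorem~\ref{T_exist_Ricci} by an approximation in which the missing Hölder regularity of $\partial\W$, $\varphi$ and $H$ is recovered only in the limit. First I would replace $\W$ by a sequence of domains $\W_k$ with $\partial\W_k$ of class $\cl^\infty$ (a fortiori $\cl^{2,\alpha}$) converging to $\W$ in the $\cl^2$ sense --- for instance level sets of a mollified distance function --- and replace $H$, $\varphi$ by mollifications $H_k\in\cl^{1,\alpha}(\overline{\W_k})$ and $\varphi_k\in\cl^{2,\alpha}(\overline{\W_k})$ with $H_k\to H$ in $\cl^1$ and $\varphi_k\to\varphi$ in $\cl^2$ on compact subsets.

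Next I would check which structural hypotheses survive the approximation. The $\cl^2$-convergence of the boundaries forces $\Hc_{\partial\W_k}\to\Hc_{\partial\W}$ uniformly, so together with $H_k\to H$ uniformly one obtains only the \emph{weakened} inequality $(n-1)\Hc_{\partial\W_k}(x)\geq n\modulo{H_k(x)}-\varepsilon_k$ on $\partial\W_k$ with $\varepsilon_k\to 0$; the exact Serrin condition \eqref{StrongSerrinCondition_exist} is lost. Moreover \eqref{cond_H_Ricci_exist} is not automatically inherited by a mollification (Jensen's inequality points the wrong way). As observed in the Remark preceding Theorem~\ref{T_exist_Ricci}, however, \eqref{cond_H_Ricci_exist} enters only through the height estimate of Theorem~\ref{teo_Est_altura}, and the monotonicity argument there ($\Hc'(t)\ge\Hc(t)^2$ propagating the boundary condition inward) is stable under an $\varepsilon_k$-deficit: it still yields a bound $\sup_{\W_k}\modulo{u}\le\sup_{\partial\W_k}\modulo{\varphi_k}+C$ for solutions of the related problems \eqref{ProblemaPsigma} attached to $(\W_k,H_k,\varphi_k)$, with $C$ independent of $k$ and of $\tau$, provided $k$ is large so that $\varepsilon_k$ is small relative to the geometry.

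The real work is a boundary gradient estimate that tolerates this deficit. Re-running the barrier construction of Theorem~\ref{teo_Est_gradiente_fronteira} with $(n-1)\Hc_{\partial\W_k}\ge n\modulo{H_k}-\varepsilon_k$, the clean inequality \eqref{est_usar_hiperb}, namely $\Delta d(x)\le -n\modulo{H(x)}$, degrades to $\Delta d(x)\le -n\modulo{H_k(x)}+C(\varepsilon_k+a)$ in the collar $\W_a$, since the Riccati comparison amplifies the deficit at the foot point inward by at most a factor $e^{Ca}$. Choosing the collar width $a$ small, this extra $O(\varepsilon_k+a)$ term is absorbed into the $c\psi'^2$ term exactly as in \eqref{est_Mwpm}, the barriers $w^\pm=\pm\psi\circ d+\varphi_k$ still satisfy $\pm\Q w^\pm<0$ in $\W_a$, and one recovers $\sup_{\partial\W_k}\norm{\nabla u}\le\norm{\varphi_k}_1+\psi'(0)$ with $\psi'(0)$ controlled by the data and $\varepsilon_k$ uniformly in $k$. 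Combined with the global gradient estimate of Theorem~\ref{teo_Est_global_gradiente} (which needs no structural hypothesis) and the height bound above, this gives a $\cl^1(\overline{\W_k})$ bound on all solutions of the family \eqref{ProblemaPsigma} uniform in $\tau$, so Theorem~\ref{T_Exist_quaselineares} produces $u_k\in\cl^{2,\alpha}(\overline{\W_k})$ solving the Dirichlet problem for $H_k$ on $\W_k$ with data $\varphi_k$.

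Finally I would pass to the limit. On any $\W'\Subset\W$ one has $\W'\Subset\W_k$ for $k$ large, and an \emph{interior} gradient estimate (of Bombieri--De Giorgi--Miranda type, the interior counterpart of Theorem~\ref{teo_Est_global_gradiente}) bounds $\norm{\nabla u_k}$ on $\W'$ in terms of $\sup_{\W_k}\modulo{u_k}$ and $\dist(\W',\partial\W_k)$ alone, hence uniformly, without reference to the moving boundaries; on $\W'$ the operator $\M$ is then uniformly elliptic with uniformly bounded coefficients, so interior Schauder estimates give a uniform $\cl^{2,\alpha}(\W')$ bound and a diagonal extraction yields $u_k\to u$ in $\cl^2_{\mathrm{loc}}(\W)$ with $u$ solving \eqref{operador_minimo_1_coord} in $\W$. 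Using the barriers $w^\pm$ on the fixed domain $\W$, where the exact Serrin condition holds, one shows $u$ extends continuously to $\overline\W$ with $u=\varphi$ on $\partial\W$, and uniqueness follows from the comparison principle, Theorem~\ref{PM_quasilineares}. The main obstacle is the third step: making the barrier/Riccati construction quantitatively robust against the $\varepsilon_k$-loss in the Serrin condition and tying the admissible collar width $a$ to $\varepsilon_k$; the interior gradient estimate of the last step is a second, independent ingredient. Both are carried out in Serrin~\cite[\S 14 p. 451]{Serrin}.
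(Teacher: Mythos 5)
The statement you are proving is a remark, and the paper itself offers no proof of it: it only sketches the very program you describe (approximate $\partial\W$, accept an $\varepsilon$-deficit in the Serrin condition, prove a sharper boundary gradient estimate, and add an interior gradient estimate for compactness) and defers the details to Serrin \S 14. Your proposal follows that same outline, so in structure it matches the paper exactly; the comparison therefore comes down to whether your quantitative claims at each stage actually hold.

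The one step I would push back on is your assertion that the deficit term is ``absorbed into the $c\psi'^2$ term exactly as in \eqref{est_Mwpm}.'' It is not of the same order. With the exact Serrin condition, the two dangerous cubic contributions in \eqref{eq_barreira_superior_Q_2} are tamed because $\modulo{H(x)}-\modulo{H(y)}=O(d(x))$ kills one factor of $\psi'$ via property P3, and $W_{\pm}-\psi'$ is bounded; every surviving term is $O(\psi'^2)$ and the logarithmic barrier, built to solve $\nu\psi'^2+\psi''=0$, dominates it. A deficit $(n-1)\Hc_{\partial\W_k}\geq n\modulo{H_k}-\varepsilon_k$ instead produces an uncompensated term $\varepsilon_k\,\psi'W_{\pm}^2\lesssim \varepsilon_k(1+\norm{\varphi}_1)^2\psi'^3$, which is genuinely cubic in $\psi'$ with no factor of $d(x)$ to spend. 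It can only be forced under the $c\psi'^2$ umbrella by paying $\varepsilon_k\sup\psi'=\varepsilon_k\,\psi'(0)=\varepsilon_k e^{\nu(\norm{u}_0+\norm{\varphi}_0)}$, i.e.\ by requiring $\varepsilon_k$ to be exponentially small in the data, and there is a circularity to untangle since enlarging $c$ changes $\nu$ and hence $\psi'(0)$ (fixable by fixing $\nu$ with a margin first, then choosing $k$). This is precisely why the remark insists that a boundary gradient estimate \emph{sharper} than Theorem \ref{teo_Est_gradiente_fronteira} is needed rather than a rerun of the same barrier. Relatedly, your bound $\Delta d(x)\leq -n\modulo{H_k(x)}+C(\varepsilon_k+a)$ contains a spurious $O(a)$ term: the Gronwall/Riccati propagation of the boundary inequality inward degrades the deficit only by a factor $e^{Ca}$, so the error is $O(\varepsilon_k)$ uniformly in the collar; if an honest $O(a)$ error were present it could not be absorbed at all, since $a\,\psi'(0)\approx(\psi'(0)-1)/\nu$ is large. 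The remaining steps (stability of the height estimate under the deficit, the interior gradient estimate of Bombieri--De Giorgi--Miranda type for compactness, barriers on the fixed domain for boundary continuity) are sound and are the same ingredients the remark names.
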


\begin{obs}
Although the additional hypothesis \eqref{cond_H_Ricci_exist} is required in order to obtain an a priori height estimate, the fundamental role in Theorem \ref{T_exist_Ricci} is played by the Serrin condition \eqref{StrongSerrinCondition_exist}. 
If fact, if the function $H$ satisfies the integral condition 
$$ \norm{H}_{L^n(\W)}<n\left(\ds\int_{\R^n}\left(1+\norm{p}^2\right)^{-\frac{n+2}{2}}dp\right)^{\frac{1}{p}}$$
instead of \eqref{cond_H_Ricci_exist}, the height estimate is guaranteed and the same conclusion of Theorem \ref{T_exist_Ricci} holds as a consequence of Theorem \ref{T_Exist_quaselineares_C0} (see \cite[Ths. 3.2.1 p. 87 and 3.4.1 p. 105]{han2016nonlinear} and \cite[Th. 16.10 p. 408]{GT}). 
\end{obs}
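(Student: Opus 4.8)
The plan is to check that the $L^{n}$-smallness of $H$ supplies the one ingredient Theorem~\ref{T_Exist_quaselineares_C0} needs but does not itself prove — an a priori $\cl^{0}$ bound for the family \eqref{ProblemaPsigma} — while the Serrin condition \eqref{StrongSerrinCondition_exist} is retained, so that Theorem~\ref{T_Exist_quaselineares_C0} then applies verbatim (its remaining hypotheses $\partial\W\in\cl^{2,\alpha}$, $\varphi\in\cl^{2,\alpha}(\overline{\W})$, $H\in\cl^{1,\alpha}(\overline{\W})$ are exactly those of Theorem~\ref{T_exist_Ricci}). A solution $u_\tau$ of \eqref{ProblemaPsigma} solves $\div\bigl(\nabla u_\tau/\sqrt{1+\norm{\nabla u_\tau}^{2}}\bigr)=n\tau H(x)$ in $\W$ with boundary values $\tau\varphi$, and $\norm{\tau H}_{L^{n}(\W)}\leq\norm{H}_{L^{n}(\W)}$, $\sup_{\partial\W}\modulo{\tau\varphi}\leq\sup_{\partial\W}\modulo{\varphi}$; hence everything reduces to the single estimate: if $w\in\cl^{2}(\W)\cap\cl^{0}(\overline{\W})$ solves $\div\bigl(\nabla w/\sqrt{1+\norm{\nabla w}^{2}}\bigr)=nG$ in $\W$ with $\norm{G}_{L^{n}(\W)}$ below the threshold of the statement, then $\sup_{\W}\modulo{w}\leq\sup_{\partial\W}\modulo{w}+C$ for some $C=C(n,\modulo{\W},\norm{G}_{L^{n}(\W)})$. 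Applied to $w=\pm u_\tau$ with $G=\pm\tau H$, this gives the required uniform bound.

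To prove the estimate I would argue on super-level sets. Put $m_{0}=\sup_{\partial\W}\modulo{w}$; for $k\geq m_{0}$ let $A_{k}=\{x\in\W:w(x)>k\}$ and $v_{k}=(w-k)^{+}$, which lies in $W^{1,1}_{0}(\W)$ since $w\leq m_{0}$ on $\partial\W$. Testing the equation against $v_{k}$ and integrating by parts gives
\[
\int_{A_{k}}\frac{\norm{\nabla w}^{2}}{\sqrt{1+\norm{\nabla w}^{2}}}\,dx=-\,n\int_{A_{k}}G\,v_{k}\,dx .
\]
From the elementary inequality $t-\dfrac{t^{2}}{\sqrt{1+t^{2}}}\leq\tfrac12$ (valid for $t\geq0$, applied with $t=\norm{\nabla w}$) and Hölder's inequality (exponents $n$ and $\tfrac{n}{n-1}$),
\[
\int_{A_{k}}\norm{\nabla w}\,dx\;\leq\;n\,\norm{G}_{L^{n}(A_{k})}\,\norm{v_{k}}_{L^{n/(n-1)}(\W)}\;+\;\tfrac12\,\modulo{A_{k}} .
\]
Combining this with the sharp Sobolev (Federer--Fleming) inequality $\norm{v_{k}}_{L^{n/(n-1)}(\W)}\leq(n\omega_{n}^{1/n})^{-1}\int_{\W}\norm{\nabla v_{k}}\,dx$, where $\omega_{n}$ is the volume of the unit ball of $\R^{n}$, and recalling that $\int_{\R^{n}}(1+\norm{p}^{2})^{-(n+2)/2}\,dp=\omega_{n}$ — so that the isoperimetric constant $n\omega_{n}^{1/n}$ is exactly the threshold the integral hypothesis imposes on the right-hand side $nG$ of the equation — one absorbs the term $n\norm{G}_{L^{n}}\norm{v_{k}}_{L^{n/(n-1)}}$ on the left: with $\theta:=\norm{G}_{L^{n}(\W)}/\omega_{n}^{1/n}<1$,
\[
\norm{v_{k}}_{L^{n/(n-1)}(\W)}\;\leq\;\frac{\modulo{A_{k}}}{2n\,\omega_{n}^{1/n}(1-\theta)} .
\]

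The estimate then closes by a De Giorgi--Stampacchia iteration: for $h>k\geq m_{0}$ one has $(h-k)\,\modulo{A_{h}}^{(n-1)/n}\leq\norm{v_{k}}_{L^{n/(n-1)}(\W)}$, hence $\modulo{A_{h}}\leq C_{0}(n,\theta)\,(h-k)^{-n/(n-1)}\modulo{A_{k}}^{n/(n-1)}$; since $\tfrac{n}{n-1}>1$, the standard iteration lemma gives $\modulo{A_{m_{0}+D}}=0$ with $D=D(n,\theta)\,\modulo{\W}^{1/n}$, i.e.\ $\sup_{\W}w\leq m_{0}+D$. Applying the same to $-w$ (which solves the equation with $G$ replaced by $-G$, of identical $L^{n}$-norm) bounds $\sup_{\W}\modulo{w}$, completing the a priori estimate. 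Feeding this uniform $\cl^{0}$ bound for \eqref{ProblemaPsigma} into Theorem~\ref{T_Exist_quaselineares_C0} produces the unique $\cl^{2,\alpha}(\overline{\W})$ solution of \eqref{ProblemaP}, which is the conclusion of Theorem~\ref{T_exist_Ricci}.

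The delicate point is the a priori estimate, and within it the matching of constants: getting the \emph{sharp} threshold requires the \emph{sharp} isoperimetric constant together with the mean-curvature structure, which is what lets one trade $\int_{A_{k}}\norm{\nabla w}$ for the area-type integral $\int_{A_{k}}\norm{\nabla w}^{2}/\sqrt{1+\norm{\nabla w}^{2}}$ at the price of only the harmless term $\tfrac12\modulo{A_{k}}$. One must also justify the integration by parts for merely $W^{1,1}_{0}$ test functions — via the coarea formula and Sard's theorem, or by smooth approximation — and carry out the routine iteration bookkeeping; by contrast, the higher regularity of $H$, $\varphi$ and $\partial\W$ plays no role in the height estimate and enters only through Theorem~\ref{T_Exist_quaselineares_C0}. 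An equivalent route integrates the equation directly over $A_{k}$, using the divergence theorem and the coarea formula to reach the differential inequality $-\tfrac{d}{dk}\modulo{A_{k}}\geq c(n,\theta)\,\modulo{A_{k}}^{(n-1)/n}$ for $k\geq m_{0}$, whence $\sup_{\W}w\leq m_{0}+C\,\modulo{\W}^{1/n}$.
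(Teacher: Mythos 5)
The paper gives no proof of this remark; it only points to Han (Theorems 3.2.1 and 3.4.1) and to Gilbarg--Trudinger (Theorem 16.10), and your argument is exactly the Stampacchia truncation proof that underlies those citations: test against $(w-k)^{+}$, trade $\norm{\nabla w}$ for $\norm{\nabla w}^{2}/W$ at the cost of $\tfrac12\modulo{A_k}$, invoke the sharp Sobolev/isoperimetric constant $n\omega_n^{1/n}$, absorb, and iterate. The individual steps check out: $\int_{\R^n}(1+\norm{p}^2)^{-(n+2)/2}\,dp=\omega_n$ (the volume of the unit ball), the inequality $t-t^2/\sqrt{1+t^2}\le \tfrac12$ holds for $t\ge 0$, and the integration by parts is harmless because for $k>\sup_{\partial\W}\modulo{w}$ the set $\overline{A_k}$ is a compact subset of $\W$, so $v_k$ is a compactly supported Lipschitz function on a region where $w$ is $\cl^2$. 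The one point you should confront head-on is the threshold. Your absorption step needs $\norm{H}_{L^n(\W)}<\omega_n^{1/n}$, i.e.\ the quantity $\bigl(\int_{\R^n}(1+\norm{p}^2)^{-(n+2)/2}\,dp\bigr)^{1/n}$ \emph{without} the prefactor $n$ printed in the remark (whose exponent $1/p$ is evidently a misprint for $1/n$). With the paper's normalization $\diver(\nabla u/W)=nH$ your smaller constant is in fact the correct and sharp one: integrating the equation over a ball $B_r\subset\W$ with $H$ constant forces $\modulo{H}<1/r$, hence $\norm{H}_{L^n(B_r)}<\omega_n^{1/n}$, so the larger bound $n\omega_n^{1/n}$ cannot hold for $H$ itself — it is the bound appropriate for $nH$, the normalization used in some of the cited sources. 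In short: your proof is correct and establishes the remark with the constant it must have intended, but it does not (and cannot) reach the constant as literally printed; say explicitly which normalization of the mean curvature equation you are using when you match your threshold to the statement.
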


\section{Sharpness of the Serrin condition}\label{cap_NaoExis}

The goal in this section is to prove that the Serrin condition,
\begin{equation}\label{SerrinCondition_naoexist}
(n-1)\Hc(y)\geq n\mod{H(y)} \ \forall \ y\in\partial\W, 
\end{equation}
is actually sharp for the solvability of the Dirichlet problem \eqref{ProblemaP}.  
That is, if \eqref{SerrinCondition_naoexist} fails, then there exists boundary values for which problem \eqref{ProblemaP} has no possible solution.

The next lemma is an important peace. In this lemma is established a height a priori estimate for solutions of equation 
\eqref{operador_minimo_1_coord} in $\W$ in those points of $\partial\W$ on which the Serrin condition \eqref{SerrinCondition_naoexist} fails.

\begin{lema}[{\cite[L. 3.4.4 p. 109]{han2016nonlinear}}]\label{M_nao_exist_MxR_estimativa_Hxz}
Let $\W\subset M$ be a bounded domain whose boundary is of class $\cl^2$. Let $H\in\cl^0(\overline{\W})$ be a non-negative function and $u\in\cl^2(\W)\cap\cl^0(\overline{\W})$ satisfying \eqref{operador_minimo_1_coord}. 
Assume that there exists $y_0\in\partial\W$ such that
\begin{equation}\label{cond_Serrin_negac_M_pos}
(n-1)\Hc_{\partial\W}(y_0)<nH(y_0).
\end{equation}
Then for each $\varepsilon>0$ there exists $a>0$ depending only on $\varepsilon$, $\Hc_{\partial\W}(y_0)$, the geometry of $\W$ and the modulus of continuity of $H$ in $y_0$,  such that
\begin{equation}\label{est_nao_exist_Hxz}
u(y_0) < \ds\sup_{\partial\W\setminus B_a(y_0)}~u+\varepsilon. 
\end{equation}
\end{lema}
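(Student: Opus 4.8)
The strategy is the classical Serrin-type barrier construction adapted to the point $y_0$ where the Serrin condition fails, together with the generalized comparison principle of Proposition \ref{M_prop_gen_JS}. Since \eqref{cond_Serrin_negac_M_pos} is a strict inequality and $H$ is continuous at $y_0$, there is a small geodesic ball $B_a(y_0)$ and a constant $H_0$ with $(n-1)\Hc_{\partial\W}(y_0) < nH_0 \le nH(x)$ for all $x$ in $\W\cap B_a(y_0)$; moreover, by shrinking $a$ we may also assume that $\Hc_{\partial(\W\cap B_a(y_0))}$ stays close to $\Hc_{\partial\W}(y_0)$ on the part of the boundary lying on $\partial\W$. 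The idea is to build, on the lens-shaped region $\W_a := \W\cap B_a(y_0)$, a supersolution $v$ of \eqref{operador_minimo_1_coord} (i.e. $\Q v \le 0 = \Q u$) that dominates $u$ on the ``outer'' portion $\partial\W_a\cap\W$ of the boundary (where we only know $u$ is bounded by $\sup_{\partial\W\setminus B_a(y_0)} u$ plus something small, via continuity of $u$ up to $\partial\W$), and whose inner normal derivative blows up to $-\infty$ along the remaining portion $\Gamma := \partial\W_a\cap\partial\W$ — so that Proposition \ref{M_prop_gen_JS} applies with this $\Gamma$.

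First I would set up the lens domain and fix the constants: choose $a$ so small that $H \ge H_0 > \tfrac{n-1}{n}\Hc_{\partial\W}(y_0)$ on $\overline{\W_a}$, and so that the distance function $d(x) = \dist(x,\partial\W)$ restricted to $\W_a$ is $\cl^2$ (after possibly further shrinking $a$ relative to the reach of $\partial\W$). Then I would look for the barrier in the separable form $v = \psi\circ d + (\text{const})$, exactly as in the transformation formula \eqref{calc_Q_w}, which gives $\M v = \psi'(1+\psi'^2)\Delta d + \psi''$. Using $\Delta d(x) = -(n-1)\Hc_{\Gamma_{d(x)}}(x)$ and the monotonicity $\Hc_{\Gamma_t}(\cdot) \ge \Hc_{\partial\W}(\cdot)$ for parallel hypersurfaces (as invoked in the proof of Theorem \ref{teo_Est_gradiente_fronteira}), together with the near-constancy of $\Hc_{\partial\W}$ near $y_0$, I get $\Delta d \le -(n-1)\kappa_0$ on $\W_a$ for a constant $\kappa_0$ slightly below $\Hc_{\partial\W}(y_0)$; since $(n-1)\kappa_0 < nH_0 \le nH$, the term $\psi'(1+\psi'^2)\Delta d$ can be made to dominate. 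The function $\psi$ should be chosen increasing and concave with $\psi' \ge 1$ and $\psi'(t)\to+\infty$ as $t\to 0^+$ (so $\partial v/\partial N = -\psi'(0^+) = -\infty$ on $\Gamma$), for example $\psi(t) = C_1\big(1-\sqrt{1-C_2 t\,}\big)$ or a logarithmic-type profile on a short interval $[0,a]$; one checks $\Q v = \M v - nH(1+\psi'^2)^{3/2} \le 0$ by comparing the growth of $\psi'(1+\psi'^2)|\Delta d|$ against $nH(1+\psi'^2)^{3/2}$, which works precisely because $(n-1)|\Delta d|^{-1}$-type coefficient beats $nH$ when $\psi'$ is large. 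Finally, the additive constant in $v$ is fixed to be $\sup_{\partial\W\setminus B_a(y_0)} u + \varepsilon$ (or $\varepsilon/2$, with $a$ chosen so the oscillation of $u$ on $\partial\W\cap \overline{B_a(y_0)}$ is $<\varepsilon/2$), so that $v \ge u$ on $\partial\W_a\cap\W$.

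With $v$ in hand, Proposition \ref{M_prop_gen_JS} applied on $\W_a$ with $\Gamma = \partial\W_a\cap\partial\W$ yields $u \le v$ throughout $\W_a$; evaluating at $y_0 \in \overline{\W_a}$ gives $u(y_0) \le v(y_0) = \psi(0) + \sup_{\partial\W\setminus B_a(y_0)} u + \varepsilon' = \sup_{\partial\W\setminus B_a(y_0)} u + \varepsilon'$ (with $\psi(0)=0$), which is the claimed estimate \eqref{est_nao_exist_Hxz} after relabeling $\varepsilon'$ as $\varepsilon$ and tracking that $a$ depends only on $\varepsilon$, on $\Hc_{\partial\W}(y_0)$, on the geometry of $\W$ (through the reach of $\partial\W$ and the modulus of continuity of $\Hc_{\partial\W}$ near $y_0$), and on the modulus of continuity of $H$ at $y_0$. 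The main obstacle I expect is the simultaneous balancing in the barrier construction: one must choose $\psi$ and $a$ so that all of (i) $\psi'\ge 1$ with $\psi'\to\infty$ at $0$, (ii) $\psi$ concave, (iii) $\Q v \le 0$ on all of $\W_a$, and (iv) the geometric gap $nH_0 - (n-1)\kappa_0 > 0$ is actually exploited — are met on a single interval $[0,a]$, while keeping the dependence of $a$ only on the allowed quantities; getting $a$ to not depend on $\sup_\W |u|$ is the delicate point, and it works here only because the barrier controls $u$ from one side using the $-\infty$ Neumann condition rather than a height bound.
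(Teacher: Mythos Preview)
There is a genuine gap in the proposal, and it lies exactly at the point you yourself flag as ``delicate'': controlling $u$ on the spherical portion $\partial B_a(y_0)\cap\W$ of $\partial\W_a$ without any dependence on $u$.

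Your claim that on $\partial\W_a\cap\W$ ``we only know $u$ is bounded by $\sup_{\partial\W\setminus B_a(y_0)}u$ plus something small, via continuity of $u$ up to $\partial\W$'' is false. Points of $\partial B_a(y_0)\cap\W$ lie in the \emph{interior} of $\W$ (at distance up to $a$ from $\partial\W$), and continuity of $u$ up to $\partial\W$ gives you no $u$-independent bound there; the modulus of continuity of $u$ is not among the allowed dependencies. Without such a bound you cannot set the additive constant in $v$ to $\sup_{\partial\W\setminus B_a(y_0)}u+\varepsilon$ and still have $v\ge u$ on $\partial B_a(y_0)\cap\W$. This is exactly why the paper's proof has two steps: a first barrier, built from the distance to a quadric tangent to $\partial\W$ at $y_0$, yields $u(y_0)\le \sup_{\partial B_a(y_0)\cap\W}u + \sqrt{2a/\nu}$; then a \emph{second} barrier on $\W\setminus B_a(y_0)$, built from $\rho(x)=\dist(x,y_0)$ with $\psi'(a)=-\infty$, gives the missing estimate $\sup_{\partial B_a(y_0)\cap\W}u \le \sup_{\partial\W\setminus B_a(y_0)}u + \psi(a)$. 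Your single-barrier scheme collapses these two steps and loses precisely the step that makes $a$ independent of $u$.

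There is also a sign error in the barrier. With $d=\dist(\cdot,\partial\W)$, the inner normal to $\W_a$ along $\Gamma=\partial\W\cap B_a(y_0)$ is $N=\nabla d$, so $\partial v/\partial N=\psi'(0^+)$, not $-\psi'(0^+)$. To invoke Proposition~\ref{M_prop_gen_JS} you need $\psi'(0^+)=-\infty$, i.e.\ $\psi$ \emph{decreasing}, not increasing; and then the computation showing $\Q v\le 0$ proceeds via $\Q v<\psi'(1+\psi'^2)(\Delta d+nH)+\psi''$ with $\psi'<0$ and $\Delta d+nH>\nu>0$, which is the mechanism actually driven by the failure of the Serrin condition. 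Your proposed comparison ``$(n-1)|\Delta d|^{-1}$-type coefficient beats $nH$'' does not correspond to any valid inequality here.
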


\begin{proof}
The proof is done in two steps. Firstly, it will be find an estimate for $u(y_0)$
depending on $\sup\limits_{\partial B_a(y_0)\cap \W}u$ for some $a$ that does not depend on $u$. Secondly, an upper bound for $\ds \sup_{\partial B_a(y_0)\cap \W}u$ in terms of $\sup\limits_{\partial\W\setminus B_a(y_0)} u $ is stated. 

\bigskip
\noindent\textbf{Step 1.} 
First of all note that from \eqref{cond_Serrin_negac_M_pos} there exists $\nu>0$ such that
\begin{equation}\label{M_condcomigual}
(n-1)\Hc_{\partial\W}(y_0) < n H(y_0)-4\nu.
\end{equation}
Let $R_1>0$ be such that $\partial B_{R_1}(y_0)\cap\W$ is connected and
\begin{equation}\label{M_cond_H}
\modulo{H(x)-H(y_0)}<\dfrac{\nu}{n}, \ \forall \ x\in B_{R_1}(y_0)\cap\W.
\end{equation}

Let $S$ be a quadric hypersurface inside $\W$, 
tangent to $\partial\W$ at $y_0$ and whose mean curvature calculated with respect to the normal field $N$ which coincides with the inner normal to $\partial\W$ at $y_0$ 
satisfies
\begin{equation}\label{M_curv_S_Gamma_ponto}
\Hc_{S}(y_0)<\Hc_{\partial\W}(y_0)+\dfrac{\nu}{(n-1)}.
\end{equation}

{
Let $d(x)=\dist(x,S)$ for $x\in\W$. It is known that $d$ is of class $\cl^2$ over the strip
$$\Sigma_{\tau}=\{x+tN_x; x \in S , \ t\in[0,\tau) \}$$ 
for some $\tau>0$ (see \cite[L. 3.1.8 p. 84]{han2016nonlinear} and \cite[L. 1 p. 420]{Serrin}). 
Besides, for each $t\in[0,\tau)$ fixed, 
$$S_{t}=\{x+tN_x; x \in S \}$$ 
is parallel to $S$. 
Let $0<R_2<\min\{\tau,R_1\}$ be such that
\begin{equation}\label{est_Laplaciano_d}
\modulo{\Delta d(x)-\Delta d(y_0)}<\nu  \ \ \forall \ x\in B_{R_2}(y_0)\cap \Sigma_{\tau}.
\end{equation}%
}

Let us fix $a<R_2$ to be made precise later. For $0<\epsilon<a$ let
$$\W_{\epsilon}=\{x \in B_a(y_0)\cap\Sigma_{\tau}; d(x)>\epsilon\}.$$

Let $\phi\in\cl^2(\epsilon, a)$ satisfying
{\small%
\begin{multicols}{4}
\begin{enumerate}
\item[P1.] $\phi(a)=0$,
\item[P2.] $\phi'\leq0$,
\item[P3.] $\phi''\geq0$,
\item[P4.] $\phi'(\epsilon)=-\infty$.
\end{enumerate}
\end{multicols}}%
\noindent It is also required that 
\begin{equation}\label{assumption_phi}
\nu (\phi'(t))^3 + \phi''(t)=0, \ t\in (\epsilon,a). 
\end{equation}

Let us define $v = \ds\sup_{\partial B_a(y_0)\cap\W} u   + \phi\circ d$. So, $v\geq u$ in $\partial \W_{\epsilon}\setminus S_{\epsilon}$. If $u\leq v$ in $S_{\epsilon}$, then an estimate for $u(y_0+\epsilon N_{y_0})$ is obtained. Observe now that if $N_{\epsilon}$ is the normal to $S_{\epsilon}$ inwards $\W_{\epsilon}$ and $x\in S_{\epsilon}\cap B_a(y_0)$, then
\begin{align*}
\parcial{v}{N_{\epsilon}}(x)&=\escalar{\nabla v(x)}{N_{\epsilon}(x)}=\escalar{\phi'(d(x))\nabla d(x)}{\nabla d(x)}=\phi'(\epsilon)=-\infty.
\end{align*}
So, Proposition \ref{M_prop_gen_JS} can be used if $\Q u \geq \Q v$ in $\W_\epsilon$. This will be proved in the sequel.

For $x\in \W_{\epsilon}$ the transformation formula \eqref{calc_Q_w} yields
$$\Q v =  \phi'(1+\phi'^2) \Delta d + \phi''-n H(x) (1+\phi'^2)^{3/2}.$$
The assumptions on $\phi$ immediately gives
$$(1+\phi'^2)^{3/2}
=(1+\phi'^2)^{1/2}(1+\phi'^2)
>(\phi'^2)^{1/2}(1+\phi'^2)=\modulo{\phi'}(1+\phi'^2)=-\phi'(1+\phi'^2).$$
Since $H\geq 0$, then 
$$-nH(x)(1+\phi'^2)^{3/2} < nH(x){\phi'}{(1+\phi'^2)} .$$
Therefore,
\begin{equation}\label{M_exp_Qv_3}
\Q v < {\phi'}{(1+\phi'^2)} \left(\Delta d(x) + n H(x)  \right) + {\phi''}.
\end{equation}
Furthermore,
\begin{align*}
\Delta d(x) + n H(x) = & \Delta d(x) - \Delta d(y_0) -(n-1)\Hc_S(y_0) + n H(x)\\
												> & -\nu -(n-1)\Hc_S(y_0) + n H(x)\tag{a}\\
										    > & -2\nu -(n-1)\Hc_{\partial\W}(y_0) + n H(x)\tag{b}\\
										    > &  2\nu - n H(y_0) + n H(x) \tag{c}\\
										    > & \nu \tag{d},
\end{align*}
where (a) follows directly from \eqref{est_Laplaciano_d}, (b) from \eqref{M_curv_S_Gamma_ponto}, (c) from \eqref{M_condcomigual} and (d) from \eqref{M_cond_H}. Using this estimate in \eqref{M_exp_Qv_3} it follows
\begin{align*}
\Q v < \phi'(1+\phi'^2) \nu + \phi'' < \phi'^3 \nu + \phi''. 
\end{align*}
Assumption \eqref{assumption_phi} yields $\Q v <0$ in $\W_{\epsilon}$. From Proposition \ref{M_prop_gen_JS} it is deduced that
$$ u \leq v = \ds\sup_{\partial B_a(y_0)\cap\W} u + \phi(\epsilon) \ \ \mbox{in}\ \ S_{\epsilon}\cap B_a(y_0). $$

Let us now define $\phi$ explicitly by (see also \cite[\S 14.4]{GT})
\begin{equation}\label{M_exp_phi}
\phi(t)=\sqrt{\dfrac{2}{\nu}}\left((a-\epsilon)^{1/2}-(t-\epsilon)^{1/2}\right).
\end{equation}
Observe that $\phi$ satisfies P1--P4 and that $\phi'^3 \nu + \phi''=0$ in $(\epsilon,a)$.
{
Indeed,
$$ \phi'(t)=-\dfrac{1}{2}\sqrt{\dfrac{2}{\nu}}(t-\epsilon)^{-1/2}=-\dfrac{1}{2}\left(\dfrac{2}{\nu(t-\epsilon)}\right)^{1/2}$$
and
$$ \phi''(t)=\dfrac{1}{4}\sqrt{\dfrac{2}{\nu}}(t-\epsilon)^{-3/2}=\dfrac{\nu}{8}\left(\dfrac{2}{\nu(t-\epsilon)}\right)^{3/2}=-\nu \phi'(t)^3.$$
}%
Therefore, 
$$ u(y_0 + \epsilon N_{y_0}) \leq \ds\sup_{\partial B_a(y_0)\cap\W} u + \sqrt{\dfrac{2}{\nu}}\left((a-\epsilon)^{1/2}\right).$$
Since this estimate holds for each $0<\epsilon<a$, we can pass to the limit as $\epsilon$ goes to zero, so
\begin{equation}\label{est_u0_1}
u(y_0) \leq  \ds\sup_{\partial B_a(y_0)\cap\W} u  + \sqrt{\dfrac{2a}{\nu}}.
\end{equation}


\bigskip

\noindent\textbf{Step 2.} 
Let $\delta=\diam(\W)$ and $\psi\in\cl^2(a,\delta)$ satisfying
{\small%
\begin{multicols}{4}
\begin{enumerate}
\item[P5.] $\psi(\delta)=0$,
\item[P6.] $\psi'\leq0$,
\item[P7.] $\psi''\geq0$,
\item[P8.] $\psi'(a)=-\infty$.
\end{enumerate}
\end{multicols}}%
\noindent It is also needed that 
\begin{equation}\label{assumptionpsi}
(n-1)\frac{(\psi'(t))^3}{t}+\psi''(t)\leq 0, \ t\in(a,\delta). 
\end{equation}

Let $w=\ds\sup_{\partial\W\setminus B_a(y_0)} u + \psi\circ\rho$, where $\rho(x)=\dist(x,y_0)$. Remind that $\rho\in\cl^2(\R^n\setminus\{y_0\})$, so $w\in\cl^2(\W\setminus B_a(y_0))$. 
The idea is to use Proposition \ref{M_prop_gen_JS} again. 
Note that $w\geq u$ in $\partial\W\setminus B_a(y_0)$.
Also, if $N_a$ is the normal to $\partial B_a(y_0)\cap\W$ inwards $\W\setminus B_a(y_0)$ and $x\in\partial B_a(y_0)\cap\W$, then
\begin{align*}
\parcial{w}{N_a}(x)&=\escalar{\nabla w(x)}{N_a(x)}=\escalar{\psi'(\rho(x))\nabla \rho(x)}{\nabla \rho(x)}=\psi'(a)=-\infty.
\end{align*}

On the other hand, the transformation formula \eqref{calc_Q_w} gives
$$ \Q w =  {\psi'}{(1+\psi'^2)} \Delta \rho + {\psi''}-n H(x) {(1+\psi'^2)^{3/2}}.$$
Since $H\geq 0$ and $\Delta \rho(x) = \dfrac{n-1}{\rho(x)}$ 
it follows
\begin{align*}
 \Q w \leq  \dfrac{n-1}{\rho} \psi'(1+\psi'^2)  + \psi''<\dfrac{n-1}{\rho} \psi'^3  + \psi''.
\end{align*}
Assumption \eqref{assumptionpsi} yields $\Q w <0$ in $\W\setminus B_a(y_0)$. 

From Proposition \ref{M_prop_gen_JS} we conclude that $u \leq w$ in $\partial B_a(y_0)\cap\W$, so\footnote{Observe that no other assumption that the connectedness of $\partial B_a(y_0) \cap \W$ was required in step 2.}
\begin{equation}\label{M_desigualdade_bordo}
\ds\sup_{\partial B_a(y_0)\cap \W} u \leq \ds\sup_{\partial\W\setminus B_a(y_0)} u + \psi(a).
\end{equation}
Using \eqref{M_desigualdade_bordo} in \eqref{est_u0_1} from step 1 one gets 
$$u(y_0) \leq \ds\sup_{\partial\W\setminus B_a(y_0)} u  + \psi(a) +\sqrt{\dfrac{2a}{\nu}}.$$

Let us define $\psi$ by (see also \cite[\S 14.4]{GT})
\begin{equation}\label{M_expres_psi}
\psi(t)=\left(\dfrac{2}{n-1}\right)^{1/2}\ds\int_t^{\delta} \left(\log \frac{r}{a}\right)^{-1/2}dr.
\end{equation}
Such a function satisfies P5--P8, and also $\dfrac{n-1}{t} \psi'(t)^3  + \psi''(t)<0$ for each $t\in(a,\delta)$.
{
In fact,
$$\psi'(t)=-\left(\dfrac{2}{n-1}\right)^{1/2}\left(\log \frac{t}{a}\right)^{-1/2}$$
and
\begin{align*}
\psi''(t)=&-\left(\dfrac{2}{n-1}\right)^{1/2}\left(-\dfrac{1}{2}\left(\log \frac{t}{a}\right)^{-3/2}\dfrac{a}{t}\dfrac{1}{a}\right)\\
         =&\dfrac{1}{2t}\left(\dfrac{2}{n-1}\right)^{1/2}\left(\log \frac{t}{a}\right)^{-3/2}\\
				 =&\dfrac{n-1}{4t}\left(\dfrac{2}{n-1}\right)^{3/2}\left(\log \frac{t}{a}\right)^{-3/2}\\
				 =&-\dfrac{n-1}{4t}\psi'(t)^3\\
				 <&-\dfrac{n-1}{t}\psi'(t)^3.
\end{align*}
}

Additionally, it is easy to see that $\ds\lim_{a\rightarrow 0}\psi(a)=0$.
Hence, for each $\varepsilon>0$, $a$ can be chosen small enough to satisfy
\[
\psi(a)+\sqrt{\dfrac{2a}{\nu}}<\varepsilon.\qedhere
\]
\end{proof}

Now we are able to prove the following non-existence result. 
\begin{teo}[{\cite[Th. 3.4.5 p. 112]{han2016nonlinear}}]\label{M_nao_exist_MxR_Hxz}
Let $\W\subset M$ be a bounded domain whose boundary is of class $\cl^2$. Let $H\in\cl^0(\overline{\W})$ be a function either non-positive or non-negative. Assume that there exists $y_0\in\partial\W$ such that
$$(n-1) \Hc_{\partial\W}(y_0) < n \modulo{H(y_0)}.$$
Then, for any $\varepsilon>0$, there exists $\varphi\in\cl^{\infty}(\overline{\W})$ with $\modulo{\varphi}<\varepsilon$ on $\partial\W$, such that there exists no $u \in \cl^2 (\Omega)\cap \cl^0(\overline{\Omega})$ satisfying problem \eqref{ProblemaP}. 
\end{teo}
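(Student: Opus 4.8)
The plan is to obtain the theorem as a short consequence of the boundary height estimate in Lemma~\ref{M_nao_exist_MxR_estimativa_Hxz}: I will choose the boundary data to be a small smooth bump concentrated near $y_0$, and the lemma will forbid any solution from attaining the prescribed value $\varphi(y_0)$ at $y_0$.

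First I would reduce to the case $H\geq 0$. If instead $H\leq 0$ and $u$ were a solution of \eqref{ProblemaP} with boundary data $\varphi$, then $\tilde u=-u$ would solve the prescribed mean curvature equation with $H$ replaced by $-H\geq 0$ and with boundary data $-\varphi$; since $\modulo{-H}=\modulo{H}$, the hypothesis $(n-1)\Hc_{\partial\W}(y_0)<n\modulo{H(y_0)}$ is unchanged, $\Hc_{\partial\W}$ is a fixed geometric quantity, and $\modulo{-\varphi}=\modulo{\varphi}$ on $\partial\W$, so producing a bad $\varphi$ for $-H$ produces a bad $\varphi$ for $H$. Hence it suffices to treat $H\geq 0$, in which case $\modulo{H(y_0)}=H(y_0)$ and the hypothesis becomes exactly \eqref{cond_Serrin_negac_M_pos}, so Lemma~\ref{M_nao_exist_MxR_estimativa_Hxz} applies to every solution of \eqref{operador_minimo_1_coord}.

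Next, given $\varepsilon>0$, I would apply the lemma with the value $\varepsilon/2$ in place of its parameter. This furnishes a radius $a>0$ which does not depend on any solution $u$, such that every $u\in\cl^2(\W)\cap\cl^0(\overline{\W})$ satisfying \eqref{operador_minimo_1_coord} obeys
\[
u(y_0)<\ds\sup_{\partial\W\setminus B_a(y_0)}u+\frac{\varepsilon}{2},
\]
the set $\partial\W\setminus B_a(y_0)$ being nonempty since $a$ is small. I would then pick a cutoff $\eta\in\cl^{\infty}$ with $0\leq\eta\leq1$, $\eta(y_0)=1$ and support inside $B_a(y_0)$, and set $\varphi:=\varepsilon\,\eta-\varepsilon/2$ on $\overline{\W}$. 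Then $\varphi\in\cl^{\infty}(\overline{\W})$ with $\modulo{\varphi}\leq\varepsilon/2<\varepsilon$ on $\partial\W$, while $\varphi(y_0)=\varepsilon/2$ and $\varphi\equiv-\varepsilon/2$ on $\partial\W\setminus B_a(y_0)$. If some $u\in\cl^2(\W)\cap\cl^0(\overline{\W})$ solved \eqref{ProblemaP} with this $\varphi$, then $u$ would satisfy \eqref{operador_minimo_1_coord} in $\W$ and $u=\varphi$ on $\partial\W$, and the displayed estimate would give $\varepsilon/2=u(y_0)<\sup_{\partial\W\setminus B_a(y_0)}u+\varepsilon/2=\sup_{\partial\W\setminus B_a(y_0)}\varphi+\varepsilon/2=0$, a contradiction. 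Hence no such $u$ exists, which is the claim.

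The real content is already packaged in Lemma~\ref{M_nao_exist_MxR_estimativa_Hxz} --- the two-step barrier construction, together with the Jenkins--Serrin-type comparison principle of Proposition~\ref{M_prop_gen_JS} --- so in this argument there is essentially no obstacle; the only point requiring care is the order of quantifiers, namely that the radius $a$ produced by the lemma is independent of the prospective solution, which is exactly what allows me to fix $a$ first and then tailor $\varphi$ to it.
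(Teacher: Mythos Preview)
Your proof is correct and follows essentially the same route as the paper: reduce to $H\geq 0$, invoke Lemma~\ref{M_nao_exist_MxR_estimativa_Hxz} to obtain a radius $a$ independent of $u$, then choose $\varphi$ to be a smooth bump supported in $B_a(y_0)$ so that the estimate \eqref{est_nao_exist_Hxz} is violated at $y_0$. The only cosmetic difference is that the paper takes $\varphi$ with $\varphi\equiv 0$ on $\partial\W\setminus B_a(y_0)$ and $\varphi(y_0)=\varepsilon$, while you shift by $-\varepsilon/2$ and apply the lemma with parameter $\varepsilon/2$; your version has the minor advantage of actually delivering the strict bound $\modulo{\varphi}<\varepsilon$ promised in the statement.
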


\begin{proof}
Obviously it can be supposed that $H\geq 0$. 
For any $\varepsilon>0$ take $a$ as in the previous lemma. Let $\varphi\in\cl^{\infty}(\overline{\W})$ such that $\varphi=0$ in $\partial\W\setminus B_a(y_0)$, $0\leq \varphi \leq \varepsilon$ on $\partial\W\cap B_a(y_0)$ and $\varphi(y_0)=\varepsilon$. Hence, no solution of equation \eqref{operador_minimo_1_coord} in $\W$ could have $\varphi$ as boundary values because such a function does not satisfy \eqref{est_nao_exist_Hxz}. 
\end{proof}

%
%

\section{Prescribed mean curvature equations in Riemannian manifolds}

Dirichlet problems for equations whose solutions describe hypersurfaces of prescribed mean curvature have been also studied outside of the Euclidean space.  However, Serrin type solvability criteria have been obtained only in {few} cases. 

For instance, P.-A Nitsche \cite{Nitsche2002} was concerned with graph-like prescribed mean curvature hypersurfaces in hyperbolic space $\HH^{n+1}$. In the half-space setting, he studied radial graphs over the totally geodesic hypersurface $S = \{x \in \R^{n+1}_+; (x_0)^2 + \dots + (x_n)^2 = 1\}$. He established an existence result if $\W$ is a bounded domain of $S$ of class $\cl^{2,\alpha}$ and $H\in\cl^1(\overline{\W})$ is a function satisfying $\sup\limits_{\overline{\W}}\modulo{H}\leq 1$ and $\modulo{H(y)}<\Hc_{C}(y)$ everywhere on $\partial\W$, where $\Hc_{C}$ denotes the hyperbolic mean curvature of the cylinder $C$ over $\partial\W$. Furthermore, he showed the existence of smooth boundary data such that no solution exists in case of $\modulo{H(y)}>\Hc_{C}(y)$ for some $y\in\partial\W$ under the assumption that $H$ has a sign. We observe that these results do not provide Serrin type solvability criterion.

Also in the half space model of the hyperbolic space, E. M. Guio-R. Sa Earp \cite{elias,eliasarticle} considered a bounded domain $\W$ contained in a vertical totally geodesic hyperplane $P$ of $\HH^{n+1}$ and studied the Dirichlet problem for the mean curvature equation for horizontal graphs over $\W$, that is, hypersurfaces which intersect at most only once the horizontal horocycles orthogonal to $\W$. They considered the hyperbolic cylinder $C$ generated by horocycles cutting ortogonally $P$ along the boundary of $\W$ and the Serrin condition, $\Hc_C(y) \geq \modulo{H(y)}$ $\forall\ y\in\partial\W$. They obtained a Serrin type solvability criterion for prescribed mean curvature $H=H(x)$ and also proved a sharp solvability criterion for constant $H$.

There are also some results of this type in the Riemannian product $M\times\R$, where $M$ is a complete Riemannian manifold of dimension $n\geq 2$. Analogously to the Euclidean setting, the solutions of the equation 
\begin{equation}\label{operador_minimo_1_manifolds}
\diver_M\left(\dfrac{\nabla_M u}{\sqrt{1+\norm{\nabla_M u}_M^2}}\right) = nH
\end{equation}
are vertical graphs in $M\times\R$ with mean curvature $H$ at each point of the graph.
However, even though the study of the Dirichlet problem for equation \eqref{operador_minimo_1_manifolds} inherits the techniques from the Euclidean setting, it is more difficult. 
For instance, in a coordinate system $(x_1,\dots,x_n)$ in $M$, the non-divergence form of equation \eqref{operador_minimo_1_manifolds} is equivalent to 
\begin{equation}\label{operador_minimo_1_manifolds_coord}
\M u:=\sum_{i,j=1}^n \left(W^2\sigma^{ij} - {u^iu^j} \right)\Hessij u=nH{W^3},
\end{equation}
where $(\sigma^{ij})$ is the inverse of the metric $(\sigma_{ij})$ of $M$, $u^i=\ds\sum_{j=1}^n\sigma^{ij} \Dj u$ are the coordinates of $\nabla u$ and $\Hessij u(x)=\Hess u(x){\left(\Ei,\Ej\right)}$.

In this context Aiolfi-Ripoll-Soret {\cite[Th. 1 p. 72]{Aiolfi}} proved that there always exists a vertical minimal graph ($H=0$) in $M\times\R$ over a mean convex, smooth and bounded domain $\W$ in $M$ for arbitrary continuous boundary data.  This result generalizes the existence part in Theorem \ref{SharpJenkinsSerrin} stated in the introduction. 
In the case where $M$ is a Hadamard manifold whose sectional curvature is bounded above by $-1$, then the mean convexity condition is sharp due to a work of M. Telichevesky {\cite[Th. 6 p. 246]{miriam}}. 
The combination of these two results gives a sharp solvability criterion for the minimal hypersurface equation in bounded domains of these types of Hadamard manifolds.

{The author of these notes have generalized the aforementioned non-existence result in the $M\times\R$ context on her PhD thesis \cite{minhatese} supervised by professor R. Sa Earp}. More precisely, it was proved that if $H$ is a continuous function and non-decreasing in the variable $z$, then the \textit{strong Serrin condition}
\begin{equation}\label{SerrinConditionGeneral}
(n-1)\Hc_{\partial\W}(y)\geq n\ds\sup_{z\in\R}\modulo{H(y,z)} \ \forall \ y\in\partial\W
\end{equation}
is necessary for the solvability of the Dirichlet problem 
\begin{equation}\tag{$P_{M\times\R}$}\label{ProblemaP_manifolds}
\left\{
\begin{split}
\M u &=n H(x,u)W^3 \ \mbox{in}\ \W,\\
u&=\varphi \ \mbox{in}\ \partial\W.
\end{split}\right.
\end{equation} 
in every Hadamard manifold {\cite[Th. 2.5 p. 26]{minhatese} (see also \cite[Cor. 2 p. 3]{AlvarezNonExistence})} and in every compact and simply connected manifold which is strictly $1/4-$pinched\footnote{A Riemannian manifold is said to be strictly $1/4-$pinched if the sectional curvature $K$ of $M$ satisfies $\frac{1}{4} K_0 < K \leq K_0$ for a positive constant $K_0$.} {\cite[Th. 2.6 p. 27]{minhatese} (see also \cite[Cor. 3 p. 4]{AlvarezNonExistence})}.

Some direct consequences derived from these non-existence results are the following. Firstly, the combination of the non-existence result for Hadamard manifolds \cite[Th. 2.5 p. 26]{minhatese} 
with the existence theorem from Aiolfi-Ripoll-Soret {\cite[Th. 1 p. 72]{Aiolfi}} for the minimal case shows that the sharp solvability criterion of Jenkins-Serrin (see Theorem \ref{SharpJenkinsSerrin} stated in the introduction) actually holds in every Cartan-Hadamard manifolds:
\begin{teo}[Sharp Jenkins-Serrin-type solvability criterion]\label{cond_nece_suf_minimo_Hadamard}
Let $M$ be a Cartan-Hadamard manifold and $\W\subset M$ a bounded domain whose boundary is of class $\cl^{2,\alpha}$ for some $\alpha\in(0,1)$. Then the Dirichlet problem for the minimal equation in $\W$ has a unique solution for arbitrary continuous boundary data if, and only if, $\W$ is {mean convex}.
\end{teo}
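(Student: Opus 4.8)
The plan is to prove the two implications separately; all the ingredients are already in place, in Sections \ref{chapterExistence} and \ref{cap_NaoExis} together with the two external results recalled just above.

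\textbf{Sufficiency (mean convex $\Rightarrow$ unique solvability).} If $\W$ is mean convex, the existence of a solution of the minimal equation in $\W$ taking on arbitrary continuous boundary data is exactly the theorem of Aiolfi--Ripoll--Soret \cite[Th. 1 p. 72]{Aiolfi} quoted above, so there is nothing to do for existence (should that statement require smoother boundary than $\cl^{2,\alpha}$, one approximates $\partial\W$ from inside by smooth mean-convex hypersurfaces and passes to the limit, as in the remark after Theorem \ref{T_exist_Ricci}). For uniqueness I would invoke the comparison principle for the minimal operator $\M$ on $M$ with $H\equiv 0$ — the manifold analogue of Theorem \ref{PM_quasilineares}, proved in the same way: for two solutions $u_1,u_2$ set $w=u_1-u_2$; since the coefficients $W^2\sigma^{ij}-u^iu^j$ of $\M$ at $\nabla u_1$ are positive definite, $w$ satisfies a homogeneous linear elliptic inequality $\LL w=\M u_1-\M u_2=0$ in a local chart, and $w=0$ on $\partial\W$, so $w\equiv 0$ by the weak maximum principle.

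\textbf{Necessity (unique solvability $\Rightarrow$ mean convex).} Here I would argue by contraposition. Suppose $\W$ is not mean convex, i.e.\ there is $y_0\in\partial\W$ with $\Hc_{\partial\W}(y_0)<0$. Apply the non-existence machinery with the constant function $H\equiv 0$: it is (trivially) non-negative and non-decreasing in the height variable, and $(n-1)\Hc_{\partial\W}(y_0)<0=n\modulo{H(y_0)}$, so the strong Serrin condition fails at $y_0$. Then the non-existence result \cite[Th. 2.5 p. 26]{minhatese}, which extends Theorem \ref{M_nao_exist_MxR_Hxz} to Hadamard manifolds, provides, for every $\varepsilon>0$, a function $\varphi\in\cl^\infty(\overline{\W})$ with $\modulo{\varphi}<\varepsilon$ on $\partial\W$ such that no $u\in\cl^2(\W)\cap\cl^0(\overline{\W})$ solves the minimal Dirichlet problem with boundary data $\varphi$. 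Hence the problem is not solvable for all continuous boundary data, which is the contrapositive of the claim.

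\textbf{Where the work sits.} The sufficiency half is a citation, and the bookkeeping of the necessity half is routine once the non-existence theorem is available; the real content is inside that theorem, i.e.\ inside the height estimate of Lemma \ref{M_nao_exist_MxR_estimativa_Hxz}. Reproducing it on a Cartan--Hadamard manifold requires choosing a hypersurface $S$ inside $\W$, tangent to $\partial\W$ at $y_0$, with $\Hc_{S}(y_0)$ still close to $\Hc_{\partial\W}(y_0)$ (hence negative); checking that $\dist(\cdot,S)$ is $\cl^2$ on a one-sided tubular strip and that $\dist(\cdot,y_0)$ is $\cl^2$ away from $y_0$ — both available because a Cartan--Hadamard manifold has no conjugate points and its geodesic spheres are smooth; and then running the two-step barrier comparison of Section \ref{cap_NaoExis} through the generalized Jenkins--Serrin principle, Proposition \ref{M_prop_gen_JS}, with the explicit barriers $\phi$ and $\psi$ built there. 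I expect the only genuinely delicate point to be the construction of the tangent barrier $S$ with controlled mean curvature together with the $\cl^2$ regularity of $\dist(\cdot,S)$ on the strip; once these are secured, the rest of the argument transcribes from the Euclidean computation essentially verbatim.
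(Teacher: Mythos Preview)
Your proposal is correct and follows essentially the same approach as the paper: the theorem is stated there as a direct consequence of combining the existence result of Aiolfi--Ripoll--Soret \cite[Th.~1 p.~72]{Aiolfi} for the sufficiency with the non-existence result \cite[Th.~2.5 p.~26]{minhatese} for the necessity. Your additional remarks on uniqueness via the comparison principle and on the internal structure of the non-existence argument are welcome elaborations, but the core strategy is identical.
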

Secondly, combining the non-existence result for positively curved manifolds \cite[Th. 2.6 p. 27]{minhatese} 
with an existence result of Spruck {\cite[Th. 1.4 p. 787]{spruck}} we infer the following:
\begin{teo}[Sharp Serrin-type solvability criterion]\label{cond_nece_suf_minimo_curv_positivo_este_este}
Let $M$ be a compact and simply connected manifold which is strictly $1/4-$pinched. Let $\W\subset M$ be a domain with $\diam(\W)<\frac{\pi}{2\sqrt{K_0}}$ and whose boundary is of class $\cl^{2,\alpha}$ for some $\alpha\in(0,1)$. Then for every constant $H$ the Dirichlet problem \eqref{ProblemaP_manifolds} has a unique solution for arbitrary continuous boundary data if, and only if, $(n-1)\Hc_{\partial\W}\geq n\modulo{H}$.
\end{teo}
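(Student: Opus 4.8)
The plan is to deduce this sharp criterion, just as in the Euclidean prototype (Theorem~\ref{SharpSerrin}), by pasting together two results in opposite directions: Spruck's existence theorem for the sufficiency of the Serrin condition, and the non-existence theorem for positively curved manifolds for its necessity. So I would split the argument into the two implications and, in each, check that the stated hypotheses ($M$ compact, simply connected, strictly $1/4$-pinched, and $\diam(\W)<\frac{\pi}{2\sqrt{K_0}}$) are exactly those required by the cited result.

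\textbf{Sufficiency.} Assume $(n-1)\Hc_{\partial\W}(y)\geq n\modulo{H}$ for all $y\in\partial\W$. Since $H$ is a constant it has a fixed sign, and together with the pinching and diameter hypotheses we are precisely in the setting of Spruck's theorem \cite[Th. 1.4 p. 787]{spruck}, which provides a solution $u\in\cl^{2,\alpha}(\overline{\W})$ of \eqref{ProblemaP_manifolds} for every $\cl^{2,\alpha}$ boundary datum. For merely continuous $\varphi$ I would approximate $\varphi$ uniformly by smooth functions $\varphi_k$, solve the corresponding problems, and pass to the limit using the interior gradient and Schauder estimates together with the a priori height bound, in the spirit of the Remark closing Section~\ref{chapter_estimates}; the limit is a solution in $\cl^2(\W)\cap\cl^0(\overline{\W})$ with boundary values $\varphi$. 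Uniqueness is then immediate: the nondivergence operator $\M$ in \eqref{operador_minimo_1_manifolds_coord} is strictly elliptic, and the computation proving the comparison principle, Theorem~\ref{PM_quasilineares}, used only ellipticity and the regularity of the coefficients $a_{ij}$, hence carries over verbatim to $M\times\R$, so any two solutions with the same continuous boundary data agree on $\W$.

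\textbf{Necessity.} Suppose, for a contradiction, that \eqref{ProblemaP_manifolds} is solvable for every continuous boundary datum but the Serrin condition fails at some $y_0\in\partial\W$, i.e. $(n-1)\Hc_{\partial\W}(y_0)<n\modulo{H}$. Because $H$ is constant it is in particular either non-positive or non-negative, and $M$ is compact, simply connected and strictly $1/4$-pinched, so the non-existence theorem \cite[Th. 2.6 p. 27]{minhatese} (the generalization to such manifolds of Theorem~\ref{M_nao_exist_MxR_Hxz}, which itself rests on the boundary height estimate of Lemma~\ref{M_nao_exist_MxR_estimativa_Hxz}) applies: for every $\varepsilon>0$ there is $\varphi\in\cl^{\infty}(\overline{\W})$ with $\modulo{\varphi}<\varepsilon$ on $\partial\W$ for which \eqref{ProblemaP_manifolds} has no solution in $\cl^2(\W)\cap\cl^0(\overline{\W})$. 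This contradicts the assumed solvability, so $(n-1)\Hc_{\partial\W}(y)\geq n\modulo{H}$ holds for all $y\in\partial\W$.

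\textbf{Main obstacle.} The analytic substance of both directions is packed into the two cited theorems, so within this argument the delicate points are purely of bookkeeping: verifying that the diameter restriction and the $1/4$-pinching are exactly what Spruck needs for his a priori estimates and what \cite[Th. 2.6]{minhatese} needs for the comparison-function construction, and observing that for a constant $H$ the strong Serrin condition \eqref{SerrinConditionGeneral} collapses to the stated inequality, since $\sup_{z\in\R}\modulo{H(y,z)}=\modulo{H}$. The one spot that genuinely requires care is the passage from $\cl^{2,\alpha}$ to continuous boundary data in the sufficiency part, which needs the approximation-plus-compactness scheme rather than a direct appeal to Spruck's statement.
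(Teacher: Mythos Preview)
Your proposal is correct and follows exactly the route the paper takes: the theorem is stated in the paper as a direct combination of Spruck's existence result \cite[Th.~1.4 p.~787]{spruck} for the sufficiency direction and the non-existence theorem \cite[Th.~2.6 p.~27]{minhatese} for the necessity direction, and you have reproduced that argument with additional care (the approximation from $\cl^{2,\alpha}$ to continuous boundary data and the uniqueness via the comparison principle are details the paper leaves implicit).
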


Notice that it was not derived directly 
a sharp Serrin type result (see Theorem \ref{SharpSerrin} stated in the introduction) for arbitrary constant $H$ in every Cartan-Hadamard manifold. This is due to the fact that in Hadamard manifolds we have some geometric restrictions. For example, not in every mean convex domain of a Hadamard manifolds the mean curvature of the parallel hypersurfaces is increasing along the geodesics normals to $\partial\W$. 

By way of illustrating better this fact let $M=\HH^n$. It follows from the existence result of Spruck \cite[Th. 1.4 p. 787]{spruck} that the Serrin condition is a sufficient condition if $H\geq \frac{n-1}{n}$. In the opposite case $0<H<\frac{n-1}{n}$, Spruck noted the existence of an entire graph of constant mean curvature $\frac{n-1}{n}$ in $\HH^n\times\R$ (see \cite{BerardRicardo} for explicit formulas) whose vertical translations and reflexions are barrier for the solutions of the problem. Having this height estimate it was possible to establish an a priori boundary gradient estimate if the strict inequality $(n-1)\Hc_{\partial\W} > n {H}$ holds {since in this case there exists a tubular neighborhood of $\partial\W$ on which $(n-1)\Hc_{\Gamma_t} > n {H}$ for every hypersurface parallel to $\partial\W$ contained on it.} This restriction over the Serrin condition in the last case did not allows us to establish a Serrin type solvability criterion for every constant $H$ directly from the combination of the existence result of Spruck {\cite[Th. 5.4 p. 797]{spruck}} with our non-existence result for Hadamard manifods {\cite[Th. 2.5 p. 26]{minhatese}} when the ambient is the hyperbolic space.

We have also established an existence result {\cite[Th. 4.4 p. 51]{minhatese} (see also \cite[Th. 5 p. 4]{AlvarezExistence})} for prescribed $H\in\cl^{1,\alpha}(\overline{\W}\times\R)$ which extends the existence result of Spruck and that yields the following Serrin type solvability criterion when combined with the non-existence theorem for Hadamard manifold {\cite[Th. 2.5 p. 26]{minhatese}} mentioned before:
\begin{teo}[Serrin type solvability criterion in $\HH^n\times\R$]\label{cond_nece_suf_HnxR}
Let $\W\subset \HH^n$ be a bounded domain with $\partial\W$ of class $\cl^{2,\alpha}$ for some $\alpha\in(0,1)$. Let $H\in\cl^{1,\alpha}(\overline{\W}\times\R)$ be a function satisfying $\Dz H\geq 0$ and $0 \leq {H} \leq \frac{n-1}{n}$ in ${\W\times\R}$. Then the Dirichlet problem \eqref{ProblemaP_manifolds} has a unique solution $u\in\cl^{2,\alpha}(\overline{\W})$ for every $\varphi\in\cl^{2,\alpha}(\overline\W)$ if, and only if, the strong Serrin condition \eqref{SerrinConditionGeneral} holds.
\end{teo}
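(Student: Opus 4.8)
The plan is to obtain the two implications from results already quoted in this section and to indicate how each is established. For the necessity of the strong Serrin condition \eqref{SerrinConditionGeneral}, I would simply invoke the non-existence theorem for Cartan--Hadamard ambients: since $\HH^n$ is Cartan--Hadamard and $H$ is continuous with $\Dz H\geq 0$, the theorem \cite[Th. 2.5 p. 26]{minhatese} (see also \cite[Cor. 2 p. 3]{AlvarezNonExistence}) applies. Read contrapositively, it says that if \eqref{SerrinConditionGeneral} fails at some $y_0\in\partial\W$, then there is a smooth $\varphi$ on $\overline\W$, with $\modulo{\varphi}$ arbitrarily small on $\partial\W$, for which \eqref{ProblemaP_manifolds} admits no $u\in\cl^2(\W)\cap\cl^0(\overline\W)$. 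Hence solvability for every $\cl^{2,\alpha}$ boundary datum forces \eqref{SerrinConditionGeneral}.

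For the converse I would run the existence program of Section \ref{chapterExistence} for the quasilinear elliptic operator \eqref{operador_minimo_1_manifolds_coord} on $\W\subset\HH^n$, mimicking the proof of Theorem \ref{T_Exist_quaselineares_C0}. By the Leray--Schauder fixed point theorem the problem is solvable once one has an a priori bound in $\cl^1(\overline\W)$, independent of $\tau$, for the solutions of the associated family $\M u=\tau n H(x,u)W^3$ in $\W$, $u=\tau\varphi$ on $\partial\W$. Three ingredients are required: first, an a priori global gradient estimate bounding $\sup_\W\norm{\nabla u}$ by $\sup_{\partial\W}\norm{\nabla u}$ and $\sup_\W\modulo{u}$, which is the $\HH^n\times\R$ counterpart of Theorem \ref{teo_Est_global_gradiente} (the maximum-principle argument there carries over, the ambient curvature entering the differentiated equation and $\Dz H\geq 0$ contributing with the favourable sign); second, an a priori height estimate $\sup_\W\modulo{u}\leq\sup_{\partial\W}\modulo{\varphi}+C$; and third, an a priori boundary gradient estimate in terms of the previous two quantities.

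The hyperbolic ambient and the hypothesis $0\leq H\leq\frac{n-1}{n}$ enter precisely in the height estimate: following Spruck, the entire graph of constant mean curvature $\frac{n-1}{n}$ in $\HH^n\times\R$ (explicit formulas in \cite{BerardRicardo}), together with its vertical translates and reflections, furnishes a family of barriers, and comparing $u$ with suitable members of this family by the comparison principle (in its $M\times\R$ form, cf. Theorem \ref{PM_quasilineares}) yields the height bound. The delicate point, and the one I expect to be the main obstacle, is the boundary gradient estimate in the borderline regime: the Serrin-type barriers built in Theorem \ref{teo_Est_gradiente_fronteira} need the parallel hypersurfaces $\Gamma_t$ in a fixed tubular neighbourhood of $\partial\W$ to inherit a Serrin inequality, which follows at once only when $(n-1)\Hc_{\partial\W}>n H$ strictly; to treat the case where equality may occur one approximates $\partial\W$ from inside by $\cl^{2,\alpha}$ hypersurfaces $\Sigma$ with $(n-1)\Hc_\Sigma\geq n H-\varepsilon$, solves the problem on the approximating domains with a sharper boundary gradient estimate, and passes to the limit using an interior gradient estimate for compactness. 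This is the content of \cite[Th. 4.4 p. 51]{minhatese} (see also \cite[Th. 5 p. 4]{AlvarezExistence}), which yields $u\in\cl^{2,\alpha}(\overline\W)$; uniqueness follows from the comparison principle.
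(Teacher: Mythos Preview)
Your proposal is correct and follows exactly the route the paper takes: the theorem is obtained by combining the non-existence result for Cartan--Hadamard ambients \cite[Th.~2.5 p.~26]{minhatese} (necessity) with the existence result \cite[Th.~4.4 p.~51]{minhatese} (sufficiency), and you cite both in the right places. The paper itself gives no further argument beyond these two citations; your additional sketch of how the existence result is proved (height barrier from the entire CMC $\frac{n-1}{n}$ graph of \cite{BerardRicardo}, then handling the borderline Serrin equality) is consistent with the surrounding discussion in the paper, though the paper does not spell out that the equality case is treated by the approximation procedure you describe---that detail lives in the cited thesis.
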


Combining Theorem \ref{cond_nece_suf_HnxR} with the non-existence result for Hadamard manifolds {\cite[Th. 2.5 p. 26]{minhatese}} and the existence result of Spruck {\cite[Th. 1.4 p. 787]{spruck}} we deduce that the sharp solvability criterion of Serrin {(Theorem \ref{SharpSerrin})} also holds in the $\cl^{2,\alpha}$ class if we replace $\R^n$ by $\HH^n$:
\begin{teo}[Sharp Serrin type solvability criterion in $\HH^n\times\R$]\label{cond_nece_suf_HnxR_constate}
Let $\W\subset \HH^n$ be a bounded domain whose boundary is of class {$\cl^{2,\alpha}$}. Then for every constant $H$ the Dirichlet problem \eqref{ProblemaP_manifolds} has a unique solution for arbitrary continuous boundary data if, and only if, $(n-1) \Hc_{\partial\W}(y) \geq n \modulo{H}$.
\end{teo}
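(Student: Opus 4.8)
The plan is to establish the two implications separately, reducing each to results already in hand, and then to upgrade the boundary regularity of the data by an approximation argument.

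\emph{Necessity.} Suppose the Serrin condition fails, so that there is $y_0\in\partial\W$ with $(n-1)\Hc_{\partial\W}(y_0)<n\modulo{H}$. A constant function is trivially non-decreasing in the $z$-variable, and $\HH^n$ is a Cartan--Hadamard manifold, so the non-existence result \cite[Th. 2.5 p. 26]{minhatese} (the $M\times\R$ analogue of Theorem \ref{M_nao_exist_MxR_Hxz}) applies: for every $\varepsilon>0$ there is $\varphi\in\cl^\infty(\overline\W)$ with $\modulo\varphi<\varepsilon$ on $\partial\W$ such that \eqref{ProblemaP_manifolds} admits no solution in $\cl^2(\W)\cap\cl^0(\overline\W)$. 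Hence the Dirichlet problem is not solvable for arbitrary continuous boundary data, which proves one direction.

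\emph{Sufficiency for $\cl^{2,\alpha}$ data.} Assume now $(n-1)\Hc_{\partial\W}(y)\geq n\modulo{H}$ on $\partial\W$. Replacing $u$ by $-u$ sends $H$ to $-H$, so we may assume $H\geq 0$. Fix first $\varphi\in\cl^{2,\alpha}(\overline\W)$. If $0\leq H\leq\frac{n-1}{n}$, the constant $H$ meets all the hypotheses of Theorem \ref{cond_nece_suf_HnxR} (here $\Dz H\equiv 0$), which produces a unique $u\in\cl^{2,\alpha}(\overline\W)$ solving \eqref{ProblemaP_manifolds}. If instead $H>\frac{n-1}{n}$, we invoke the existence theorem of Spruck \cite[Th. 1.4 p. 787]{spruck}, which gives solvability under the Serrin condition in that range. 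In either case uniqueness is immediate from the comparison principle, Theorem \ref{PM_quasilineares}.

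\emph{From $\cl^{2,\alpha}$ to continuous data, and the main obstacle.} Given $\varphi\in\cl^0(\partial\W)$, extend it and approximate it uniformly by $\varphi_k\in\cl^{2,\alpha}(\overline\W)$, and let $u_k$ be the solutions just produced for the data $\varphi_k$. A height estimate bounds $\norm{u_k}_0$ uniformly: for $H<\frac{n-1}{n}$ the parallel-hypersurface barrier used in Theorem \ref{teo_Est_altura} is no longer available, and one must instead use vertical translates and reflections of the entire graph of constant mean curvature $\frac{n-1}{n}$ in $\HH^n\times\R$ (the barrier underlying Spruck's argument), while for $H\geq\frac{n-1}{n}$ one may argue directly. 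With the uniform $\cl^0$ bound in place, an interior gradient estimate followed by interior Schauder theory yields local $\cl^{2,\alpha}$ bounds in $\W$, and a diagonal subsequence converges in $\cl^2_{\mathrm{loc}}(\W)$ to a solution $u\in\cl^2(\W)$ of \eqref{operador_minimo_1_manifolds_coord}; a boundary barrier construction (as in Theorem \ref{teo_Est_gradiente_fronteira}) then forces $u$ to extend continuously to $\overline\W$ with $u=\varphi$ on $\partial\W$, and uniqueness follows once more from Theorem \ref{PM_quasilineares}. I expect the genuine difficulty to lie exactly in this last step — securing the interior gradient estimate and the correct height barrier in the delicate regime $0<H<\frac{n-1}{n}$, where one is forced to borrow Spruck's entire-graph barrier — whereas the sign reduction and the comparison-principle uniqueness are routine.
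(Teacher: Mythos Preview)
Your proposal is correct and follows the same route as the paper, which deduces the theorem by combining Theorem \ref{cond_nece_suf_HnxR} (for $0\leq H\leq\frac{n-1}{n}$), Spruck's existence result \cite[Th.~1.4 p.~787]{spruck} (for $H\geq\frac{n-1}{n}$), and the non-existence result for Hadamard manifolds \cite[Th.~2.5 p.~26]{minhatese}. The paper gives only this one-line sketch; your additional elaboration of the approximation step from $\cl^{2,\alpha}$ to continuous boundary data, and your identification of the entire-graph barrier as the height-control mechanism in the regime $0<H<\frac{n-1}{n}$, are exactly the ingredients alluded to in the paper's surrounding discussion and in the remark following Theorem~\ref{T_exist_Ricci}.
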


We have also proved \cite[Th. 4.1 p. 40]{minhatese} (see also \cite[Th. 4 p. 4]{AlvarezExistence}) a generalization of the Spruck's existence result {\cite[Th. 1.4 p. 787]{spruck}} for constant mean curvature.  
Putting together this result with the non-existence theorem for Hadamard manifolds {\cite[Th. 2.5 p 26]{minhatese}} we derive the following generalization in the $\cl^{2,\alpha}$ class of Theorem \ref{T_Serrin_Ricci} of Serrin stated in the introduction:
\begin{teo}[Serrin type solvability criterion 2]\label{cond_nece_suf_hadamard}
Let $M$ be a Cartan-Hadamard manifold and $\W\subset M$ a bounded domain whose boundary is of class $\cl^{2,\alpha}$ for some $\alpha\in(0,1)$. Suppose that $H\in\cl^{1,\alpha}(\overline{\W}\times\R)$ is either non-negative or non-positive in $\overline{\W}\times\R$, $\Dz H\geq 0$ and
$$\Ricc_x\geq n\ds\sup_{z\in\R}\norm{\nabla_x H(x,z)}-\dfrac{n^2}{n-1}\ds\inf_{z\in\R}\left(H(x,z)\right)^2, \ \forall \ x\in\W.$$
Then the Dirichlet problem \eqref{ProblemaP_manifolds} has a unique solution $u\in\cl^{2,\alpha}(\overline{\W})$ for every $\varphi\in\cl^{2,\alpha}(\overline{\W})$ if, and only if, the strong Serrin condition \eqref{SerrinConditionGeneral} holds.
\end{teo}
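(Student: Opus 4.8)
The plan is to deduce this theorem by combining two results of opposite nature recalled above: for the sufficiency of the strong Serrin condition, the author's generalization \cite[Th. 4.1 p. 40]{minhatese} of Spruck's existence theorem; for its necessity, the non-existence theorem for Cartan--Hadamard manifolds \cite[Th. 2.5 p. 26]{minhatese}. So the task is not to reprove those two theorems but to check that the hypotheses here are precisely the ones under which each applies, and to account for the dependence of $H$ on the height variable.

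\textbf{Sufficiency.} Assuming the strong Serrin condition \eqref{SerrinConditionGeneral}, I would run the existence scheme of Theorems \ref{T_Exist_quaselineares_C0} and \ref{teo_Est_altura} transplanted to $M\times\R$: through the Leray--Schauder fixed point theorem applied to a continuity family analogous to \eqref{ProblemaPsigma}, solvability of \eqref{ProblemaP_manifolds} for arbitrary $\cl^{2,\alpha}$ boundary data reduces to a priori $\cl^1$ bounds that are independent of the continuity parameter and of $u$. The global and boundary gradient estimates carry over from the Euclidean arguments (Theorems \ref{teo_Est_global_gradiente} and \ref{teo_Est_gradiente_fronteira}) once $\partial\W$ is replaced by its parallel hypersurfaces and the Laplacian comparison for $\dist(\cdot,\partial\W)$ is used; the place where the two extra hypotheses bite is the height estimate. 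Comparing $u$ with a function $w=\phi\circ d+\sup_{\partial\W}\modulo{\varphi}$ on the set where $d=\dist(\cdot,\partial\W)$ is smooth, one needs the mean curvature $\Hc_{\Gamma_t}$ of the parallel hypersurfaces to keep satisfying the Serrin inequality $(n-1)\Hc_{\Gamma_t}(x)\geq n\modulo{H(x,z)}$ along the normal geodesics. The Riccati equation governing $\Hc_{\Gamma_t}$ now carries a term $\frac{1}{n-1}\Ricc_x(\nabla d,\nabla d)$, and the curvature bound
$$\Ricc_x\geq n\ds\sup_{z\in\R}\norm{\nabla_x H(x,z)}-\dfrac{n^2}{n-1}\ds\inf_{z\in\R}\left(H(x,z)\right)^2,$$
together with the monotonicity $\Dz H\geq 0$, is exactly what keeps the auxiliary function $h(t)=\frac{n}{n-1}H$ (evaluated along the normal geodesic, as in the proof of Theorem \ref{teo_Est_altura}) a subsolution of the comparison ODE, reproducing inequalities \eqref{desig_sem_ricc}--\eqref{desig_sem_ricc_g} and hence $\Hc_{\Gamma_t}(t)\geq\modulo{h(t)}$ and the height bound. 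With the three a priori estimates established, \cite[Th. 4.1 p. 40]{minhatese} yields a solution $u\in\cl^{2,\alpha}(\overline{\W})$; uniqueness is the comparison principle (Theorem \ref{PM_quasilineares}), which still applies because $\Dz H\geq 0$ forces the zeroth-order coefficient of the linearized operator to be nonpositive.

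\textbf{Necessity.} For the converse I would argue by contrapositive. If \eqref{SerrinConditionGeneral} fails, there are $y_0\in\partial\W$ and $z_0\in\R$ with $(n-1)\Hc_{\partial\W}(y_0)<n\modulo{H(y_0,z_0)}$; replacing $H$ by $-H$ if necessary we may take $H\geq 0$, and then $\Dz H\geq 0$ gives $(n-1)\Hc_{\partial\W}(y_0)<nH(y_0,z)$ for all $z\geq z_0$. This is the hypothesis of the non-existence theorem \cite[Th. 2.5 p. 26]{minhatese}, the $M\times\R$ analogue of Theorem \ref{M_nao_exist_MxR_Hxz}: one places a quadric barrier hypersurface tangent to $\partial\W$ at $y_0$ — legitimate since $M$ is Cartan--Hadamard, so the distance functions involved are of class $\cl^2$ where needed and the geodesic spheres $\partial B_a(y_0)$ are connected — then runs the two-step comparison of Lemma \ref{M_nao_exist_MxR_estimativa_Hxz} to obtain, for every $\varepsilon>0$, a radius $a>0$ with $u(y_0)<\sup_{\partial\W\setminus B_a(y_0)}u+\varepsilon$ for any solution $u$, and finally chooses $\varphi\in\cl^\infty(\overline{\W})$ supported in $B_a(y_0)$ with a spike $\varphi(y_0)=\varepsilon$ that no solution can attain. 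Hence \eqref{ProblemaP_manifolds} fails to be solvable for all boundary data, which is the contrapositive of the claim.

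\textbf{Main obstacle.} The genuinely delicate step is the height estimate in the sufficiency part: one has to verify that the Riccati comparison with the Ricci term present, combined with $\Dz H\geq 0$ and the passage to $\sup$/$\inf$ over the vertical variable, still forces the parallel hypersurfaces to inherit the Serrin condition, exactly in the spirit of the Remark following Theorem \ref{teo_Est_altura}. Everything else is bookkeeping: aligning the sign hypothesis, the monotonicity in $z$, and the curvature lower bound of the statement with the hypotheses of \cite[Th. 4.1 p. 40]{minhatese} and \cite[Th. 2.5 p. 26]{minhatese}.
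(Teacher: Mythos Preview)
Your proposal is correct and matches the paper's approach exactly: the theorem is stated in the paper as a direct corollary obtained by ``putting together'' the existence result \cite[Th.~4.1 p.~40]{minhatese} with the non-existence theorem for Hadamard manifolds \cite[Th.~2.5 p.~26]{minhatese}, which is precisely the pair you invoke for sufficiency and necessity. Your additional commentary on how the Ricci term enters the Riccati comparison for the height estimate and why the Cartan--Hadamard hypothesis makes the barrier construction work is accurate and goes beyond what the paper spells out here, but the skeleton of the argument is the same.
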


Finally, using the non-existence result for the types of positively curved manifold mentioned above we also derive:
\begin{teo}[Serrin type solvability criterion 3]
\label{cond_nece_suf_curv_posit}
Let $M$ be a compact and simply connected manifold which is strictly $1/4-$pinched. Let $\W\subset M$ be a domain with $\diam(\W)<\frac{\pi}{2\sqrt{K_0}}$ and whose boundary is of class $\cl^{2,\alpha}$ for some $\alpha\in(0,1)$. Suppose that $H\in\cl^{1,\alpha}(\overline{\W}\times\R)$ is either non-negative or non-positive in $\overline{\W}\times\R$, $\Dz H\geq 0$ and
$$\Ricc_x\geq n\ds\sup_{z\in\R}\norm{\nabla_x H(x,z)}-\dfrac{n^2}{n-1}\ds\inf_{z\in\R}\left(H(x,z)\right)^2, \ \forall \ x\in\W.$$
Then the Dirichlet problem \eqref{ProblemaP_manifolds} has a unique solution $u\in\cl^{2,\alpha}(\overline{\W})$ for every $\varphi\in\cl^{2,\alpha}(\overline{\W})$ if, and only if, the strong Serrin condition \eqref{SerrinConditionGeneral} holds.
\end{teo}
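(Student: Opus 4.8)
The plan is to argue exactly as for Theorem~\ref{cond_nece_suf_hadamard}, replacing the non-existence input for Cartan-Hadamard manifolds by the corresponding non-existence result for strictly $1/4$-pinched manifolds, and checking that the generalized Spruck existence theorem still applies under the present diameter restriction.

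First I would handle \emph{necessity}. Assuming \eqref{ProblemaP_manifolds} is solvable for every $\varphi\in\cl^{2,\alpha}(\overline\W)$, suppose for contradiction that the strong Serrin condition \eqref{SerrinConditionGeneral} fails at some $y_0\in\partial\W$, that is, $(n-1)\Hc_{\partial\W}(y_0)<n\sup_{z\in\R}\modulo{H(y_0,z)}$. Since $H$ is continuous, has a fixed sign and satisfies $\Dz H\geq0$, the non-existence theorem for compact simply connected strictly $1/4$-pinched manifolds \cite[Th. 2.6 p. 27]{minhatese} (see also \cite[Cor. 3 p. 4]{AlvarezNonExistence}) would provide, for a suitable $\varepsilon>0$, a smooth $\varphi$ with $\modulo{\varphi}<\varepsilon$ on $\partial\W$ for which \eqref{ProblemaP_manifolds} has no solution in $\cl^2(\W)\cap\cl^0(\overline\W)$, a contradiction. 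Hence \eqref{SerrinConditionGeneral} must hold.

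For \emph{sufficiency} I would run the usual existence program: it is enough to bound the family of solutions of the related problems \eqref{ProblemaPsigma} in $\cl^1(\overline\W)$ independently of $\tau$ and then invoke the Leray--Schauder argument of Theorem~\ref{T_Exist_quaselineares}, uniqueness coming from the comparison principle, Theorem~\ref{PM_quasilineares}. The Ricci lower bound in the hypothesis is the manifold analogue of Serrin's condition \eqref{cond_Ricc_Serrin}: together with \eqref{SerrinConditionGeneral} it should force every hypersurface parallel to $\partial\W$ inside $\W$ to inherit the Serrin inequality, via the Riccati comparison for the mean curvature $\Hc(t)$ of the parallels, where the term $\Ricc_{x(t)}/(n-1)$ enters the differential inequality $\Hc'(t)\geq \Hc(t)^2+\Ricc_{x(t)}/(n-1)$, in the spirit of the proof of Theorem~\ref{teo_Est_altura}. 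From this one gets the a priori height estimate, then the boundary and global gradient estimates of the type of Theorems~\ref{teo_Est_gradiente_fronteira} and~\ref{teo_Est_global_gradiente}; altogether this is the content of the generalized Spruck existence theorem \cite[Th. 4.1 p. 40]{minhatese} (see also \cite[Th. 4 p. 4]{AlvarezExistence}), which extends \cite[Th. 1.4 p. 787]{spruck} and which applies here since the hypothesis $\diam(\W)<\frac{\pi}{2\sqrt{K_0}}$ keeps $\W$ inside the region where the distance function to $\partial\W$ and the parallel hypersurfaces behave well. This produces the solution $u\in\cl^{2,\alpha}(\overline\W)$.

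The hard part will be the step asserting that the parallels inherit the Serrin condition. In a Cartan-Hadamard manifold this holds unconditionally once \eqref{cond_Ricc_Serrin} does; in the positively curved case the mean curvature of the parallels may blow up at a focal point, so one must first ensure that no focal point is reached before the geodesic normal to $\partial\W$ leaves $\W$, and it is precisely the pinching together with $\diam(\W)<\frac{\pi}{2\sqrt{K_0}}$ that guarantees this. Once this geometric input is secured, the remaining estimates and the fixed-point argument go through as in the Euclidean and Hadamard cases.
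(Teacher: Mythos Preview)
Your proposal is correct and follows exactly the route the paper indicates: the theorem is presented there without a written proof, simply as the combination of the generalized Spruck existence result \cite[Th.~4.1 p.~40]{minhatese} (sufficiency) with the non-existence theorem for strictly $1/4$-pinched manifolds \cite[Th.~2.6 p.~27]{minhatese} (necessity). Your additional remarks on the role of the diameter bound and the Ricci inequality in controlling the parallel hypersurfaces are accurate and in fact go beyond what the paper spells out.
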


Other works have considered a Serrin type condition that provides some existence theorems in more general context (see \cite{Alias2008}, \cite{Dajczer2008}, \cite{Dajczer2005} as examples). However, to the best of our knowledge, no other Serrin-type solvability criterion has been proved in settings different from the Euclidean one.

\phantomsection
\addcontentsline{toc}{section}{Bibliography}
\bibliographystyle{amsplain}
\bibliography{bibliografia}

\end{document}